\DeclareOldFontCommand{\bf}{\normalfont\bfseries}{\mathbf}
\DeclareOldFontCommand{\cal}{\normalfont\bfseries}{\mathcal}
\newtheorem{theorem}{Theorem}[section]
\newtheorem{lemma}[theorem]{Lemma}
\newtheorem{proposition}[theorem]{Proposition}
\newtheorem{corollary}[theorem]{Corollary}
\newtheorem{assumption}{Assumption}[section]
\newtheorem{example}{Example}[section]
\newtheorem{definition}{Definition}[section]
\newtheorem{remark}{Remark}[section]
\numberwithin{equation}{section}
\newcommand{\Eb}{\mathbb{E}}
\newcommand{\Nb}{\mathbb{N}}
\newcommand{\Pb}{\mathbb{P}}
\newcommand{\Rb}{\mathbb{R}}
\newcommand{\Tb}{\mathbb{T}}
\newcommand{\Dc}{\mathcal{D}}
\newcommand{\Nc}{\mathcal{N}}
\newcommand{\Pc}{\mathcal{P}}
\newcommand{\Pf}{\mathfrak{P}}
\newcommand{\Ff}{\mathfrak{F}}
\newcommand{\bP}{\mathbf{P}}
\newcommand{\dd}{\mathrm{d}}
\title{Mean-Field Games Under Model Uncertainty}
\author{Zongxia Liang\thanks{Department of Mathematical Sciences, and Center for Insurance and Risk Management, School of Economics and Management, Tsinghua University, China. Email: liangzongxia@tsinghua.edu.cn} 
	\hspace{2ex}
Zhou Zhou\thanks{School of Mathematics and Statistics, University of Sydney, Australia. Email: zhou.zhou@sydney.edu.au} 
\hspace{2ex}
Yaqi Zhuang\thanks{Department of Mathematical Sciences, Tsinghua University, China. Email: zyq23@mails.tsinghua.edu.cn} 
\hspace{2ex}
Bin Zou\thanks{Department of Mathematics, University of Connecticut, USA. Email: bin.zou@uconn.edu}}
\date{\today}
\begin{document}
\maketitle

\begin{abstract}
We study discrete-time, finite-state mean-field games (MFGs) under model uncertainty, where agents face ambiguity about the state transition probabilities. Each agent maximizes its expected payoff against the worst-case transitions within an uncertainty set. Unlike in classical MFGs, model uncertainty renders the population distribution flow stochastic. This leads us to consider strategies that depend on both individual states and the realized distribution of the population. Our main results establish the asymptotic relationship between $N$-agent games and MFGs: every MFG equilibrium constitutes an $\varepsilon$-Nash equilibrium for sufficiently large populations, and conversely, limits of $N$-agent equilibria are MFG equilibria. We also prove the existence of equilibria for finite-agent games and construct a solvable mean-field example with closed-form solutions. 
\end{abstract}

\noindent\textbf{Keywords:}
Mean-field game,  model uncertainty,  
Nash equilibrium,
robust optimization

\section{Introduction}
\label{sec:intro}

\subsection{Background and Motivation}
Mean-field game (MFG) theory was independently developed by \citet{LasryLions.06a, LasryLions.06b,lasry2007mean} and \citet{huang2006large, HuangMalhameCaines.07}, providing a mathematical framework for analyzing strategic interactions among large populations of agents. 
The central premise of the MFG theory is that agents in the population share similar characteristics and objectives, and each agent's optimal decision depends primarily on the aggregate distribution of all agents' states rather than on the specific actions of individual agents. This ``mean-field'' approximation transforms the original high-dimensional multi-agent game into a tractable optimal control problem, significantly reducing computational complexity while preserving the essential features of the original game. 
Over the past two decades, MFGs have attracted substantial research interest across mathematics, economics, and engineering, with applications ranging from financial markets and energy systems to crowd dynamics and epidemiology; see the monographs \citet{CarmonaDelaRue.17a, CarmonaDelaRue.17b, bensoussan2013mean} and the survey paper \citet{gomes2014mean} for a comprehensive treatment of MFGs.

A classic topic of discrete-time mean-field problems is the mean-field Markov game, which has been extensively studied in the existing literature. \citet{gomes2010discrete} pioneered the study of discrete-time MFGs on finite state spaces, establishing existence and uniqueness results. \citet{saldi2018markov} introduced the solution concept of Markov-Nash equilibrium under infinite-horizon discounted cost criteria; they proved the existence of mean-field equilibria under mild assumptions and demonstrated that these equilibria serve as approximate solutions for finite-agent games. Building on these foundational results, subsequent research has expanded the theoretical framework to partially observable environments and ergodic cost criteria; see, e.g.,  \citet{saldi2019approximate} and  \citet{biswas2015mean}. However, these studies assume that agents have complete knowledge of state transition probabilities. This assumption is often unrealistic, as the mechanisms governing state transitions in real economic and social systems typically involve significant uncertainty.

To address model uncertainty in decision-making, scholars have developed theoretical frameworks across economics and operations research that inform our approach. In economics, \citet{hansen2001robust, hansen2008} introduced robust control theory to address uncertainty in macroeconomic model parameters, while \citet{gilboa1989maxmin} proposed maxmin expected utility theory to model decision-making under ambiguity with multiple priors.
In operations research and control theory, \citet{wiesemann2013robust} systematically studied robust Markov decision processes, and \citet{iyengar2005robust} analyzed dynamic programming under transition probability uncertainty. Specifically, for transition uncertainty in Markov decision processes, \citet{nilim2005robust} and \citet{xu2010distributionally} proposed methods based on uncertainty sets, where the key idea is to optimize against the worst-case scenario within a set of plausible probability distributions.

In this paper, we introduce robust optimization into the MFG framework to study discrete-time MFGs with model uncertainty on finite state spaces. Each agent adopts a robust decision strategy to maximize the expected payoff under the worst-case transition probabilities. However, such a robust extension introduces fundamental challenges, as we explain in Subsection~\ref{sub:framework}.

\subsection{Standard Framework and Notation}
\label{sub:framework}

We study several closely related versions of a Markov game, and all of them share the same standard framework. In this subsection, we introduce the basic setup, mainly based on the infinite-horizon MFG version, as well as the standard notation that will be used in this paper.

Let $\Nb = \{0, 1, \cdots\}$ denote the set of non-negative integers and $\Nb_+ := \Nb \backslash \{0\}$. 
Let $\Dc := \{1,2,\cdots,d\}$ (for some $d \in \Nb_+$ denote the finite state space with $d$ states for the Markov state process of every agent, equipped with the discrete metric $d_\Dc$. Let $U$ denote the action space, which is a compact set in $\Rb^{\ell}$ (for some $\ell \in \Nb_+$).  We use $\Tb$ to denote the discrete time horizon, which may be infinite, i.e., $\Tb :=\{0,1,2,\cdots\}$, or finite, i.e., $\Tb :=\{0,1,2,\cdots, T\}$.
For any Borel space $\mathcal{X}$, let $\Pc(\mathcal{X})$ represent the set of all probability measures on $\mathcal{X}$; as an example, $\Pc(\Dc)$ denotes the simplex of all probability measures over the space $\Dc$ and is given by 
\begin{align*}
	\Pc(\Dc) = \{ p = (p[1], \cdots, p[d]) \in \Rb_+^d: \, p[1] + \cdots + p[d]  = 1 \}.
\end{align*}
Hereafter, for a vector $p$, we use $p[x]$ to denote its $x$-th  component, and a similar notation $p[x, y]$ applies when $p$ is a matrix. For convenience, we denote $\Pc(\Dc^2)$ the set of all $d \times d$ matrices $p$ whose row vectors are in $\Pc(\Dc)$, 
\begin{align}
	\label{eq:tran}
	\Pc(\Dc^2) = \{ p = (p[x, y])_{x, y \in \Dc} : p[x, \cdot] \in \Pc(\Dc) \text{ for all } x \in \Dc \}.
\end{align}
Let $\Delta := \Dc \times \Pc(\Dc)$ denote the state-distribution product space.

We consider a Markov game of a large number of homogeneous agents, with the same preferences (reward function). Each agent makes its own decision, while interacting with the other agents, which in turn determines the transition probabilities of its state process. The goal of each agent is to find an optimal control that maximizes its reward. Given the homogeneity in the game, we naturally look for a \emph{stationary} (Nash) equilibrium under which all agents follow the same optimal control. To that end, we consider a \emph{representative} agent, referred to as Agent $i$, and call the remaining agents \emph{generic} agents or simply the population; we further assume that Agent $i$ uses control $\pi^i = \{ \pi^i_t \}_{t \in \Tb}$ but all generic agents apply a common control $\pi = \{ \pi_t \}_{t \in \Tb}$. The intuition is as follows: if $\pi^*$ is an equilibrium control and assume that all generic agents follow $\pi = \pi^*$, then the same $\pi^*$ should solve the individual optimization problem of Agent $i$ over all admissible $\pi^i$.

For the moment, assume there is no model uncertainty. In this case, all agents know their transition probabilities exactly, which are fully characterized by $P = \{P_t\}_{t \in \Tb}$, where the matrix $P_t: U \times \Pc(\Dc) \rightarrow \Pc(\Dc^2)$ is a \emph{deterministic} function of the control-distribution pair for every $t \in \Tb$. To be precise, for an agent in state $x$ and applying control $u$ ($= \pi_t$ or $\pi_t^i$) at time $t$, the probability that it transitions to state $y$ at time $t+1$ is given by $P_t(u, \mu_t)[x, y]$ for all $y \in \Dc$, where $\mu_t \in \Pc(\Dc)$ is the state distribution of the population at time $t \in \Tb$ (that is, $\mu_t[x]$ is the proportion of agents in state $x$ at time $t$). Let $\mu = \{\mu_t\}_{t \in \Tb}$ denote the distribution flow of the population and assume the initial distribution, $\mu_0 = v \in \Pc(\Dc)$, is given;  the evolution dynamics of $\mu$ is given by 
\begin{align*}
	\mu_{t+1}[y] = \sum_{x \in \Dc} \, \mu_t[x] \cdot P_t(\pi_t, \mu_t)[x, y], \quad t \in \Tb, \, y \in \Dc.
\end{align*}
Note that $\mu$ only depends on its initial condition $v$ and the common control $\pi$, but it is \emph{independent} of Agent $i$'s control $\pi^i$; we replace $\mu$ by $\mu^{\pi, v}$ if we need to emphasize its dependence on $\pi$ and $v$. The distribution flow $\mu$ evolves \emph{deterministically} since $P_t$ is deterministic for all $t$. For every agent, given its current state $x$ and control $u$, and the population distribution $\nu$ (at the same time point), let $r(x, u, \nu, y)$ denote the one-step, time-homogeneous reward function if it reaches state $y$ in the next time period, and we assume that $r$ is measurable and bounded.  When there is no model uncertainty, we define Agent $i$'s utility function (the total discounted reward) by 
\begin{align}
	\label{eq:V_no}
	V^{\pi^i, \pi}_{\mathrm{no}}(x,v) = \Eb \left[\sum_{t = 0}^{\infty} \rho^t  \, r \left(X^i_t, \pi^i_t, \mu_t,  X^i_{t+1} \right)  \bigg| X^i_0 = x, \mu_0 = v \right],
\end{align}
where $\rho \in (0,1)$ is a discounting factor, and $X^i = \{X_t^i\}_{t \in \Tb}$ denotes Agent $i$'s state process under $(\pi^i, \mu)$.

As argued in the previous subsection, it is unlikely in most applications to know the transition probabilities $P$ precisely, and this motivates us to consider uncertain transition probabilities belonging to some uncertainty set $\Pf$ in the above Markov game. However, such a robust extension is far from trivial, and it introduces at least two key challenges. 
\begin{itemize}
	\item 
    First, even for a fixed common control $\pi$ and an initial distribution $v \in \mathcal{P}(\mathcal{D})$, the distribution flow $\mu$ becomes \emph{stochastic} because the transition kernel $P$ is no longer deterministic but instead belongs to some uncertainty set $\Pf$. Recall that in the absence of model uncertainty, the deterministic nature of $\mu$ enables optimal strategies that depend solely on the current state and \emph{not} on the distribution (see \citet{saldi2018markov}); hence it suffices to consider state-dependent Markov controls of the form $u_t(x) \in U$ for all $t \in \mathcal{T}$. Evidently, the introduction of model uncertainty induces a fundamental change---it leads to a stochastic flow $\mu$, and the optimal decision at time $t$ may depend on both the current state $x$ and the \emph{realized} distribution $\nu :=\mu_t$. Therefore, we extend the class of Markov controls from purely state-dependent to state- and distribution-dependent policies $u(x, \nu)$. Moreover, we allow \emph{relaxed} controls, meaning that $u(x, \nu) \in \mathcal{P}(U)$ is a probability distribution over the compact action space $U$.
	
	\item Second, the ``robust'' extension of $V^{\pi^i, \pi}_{\mathrm{no}}$ in \eqref{eq:V_no} requires a delicate characterization of the ``worst-case" scenario to ensure both analytical tractability and economic interpretability. One natural approach is to employ the dynamic programming principle (DPP) and define Agent $i$'s robust utility function by 
	\begin{align}
		\label{eq:V_DPP}
			V^{\pi^i, \pi} (x, v) = \inf_{P \in \Pf} \; \Eb\left[ r \left(X_0^i,  \pi_0^i, \mu_0, X_1^i \right) + \rho \cdot V^{\pi^i, \pi}(X_1^i, \mu_1) \bigg| X_0^i = x, \, \mu_0 = v \right].
	\end{align}
    Recall that $V^{\pi^i, \pi}_{\mathrm{no}}$ in \eqref{eq:V_no} satisfies a similar equation \emph{without} the inf operator. The above definition of $V^{\pi^i, \pi}$ is heuristic, because we have not formally defined the uncertainty set $\Pf$ nor the exact mechanism of identifying the minimizer over all $P \in \Pf$.  This task is highly technical; we defer the details to the next section.
\end{itemize}

\subsection{Summary of Main Results and Contributions}

We summarize the main results of this paper and compare our work with related literature to outline the key contributions.

\begin{enumerate}
    \item \textbf{Extended strategy space accommodating distributional dependence.} We extend the strategy space to allow for simultaneous dependence on the individual state $x$ and population distribution $v$. This extension enables agents to dynamically adjust their strategies based on the \emph{observed} population distributions, providing a flexible mechanism to respond to the stochastic evolution of the population distributions under uncertainty. While state- and distribution-dependent strategies are relatively rare in the existing literature, they arise naturally in our setting: under model uncertainty, the population distribution evolves stochastically even when strategies are fixed, making real-time distributional information essential for optimal decision making. We note that two recent papers, \citet{bayraktar2024time} and \citet{hofer2025markov}, that also consider state- and distribution-dependent strategies; the former focuses on time inconsistency in MFGs, while the latter aims to establish connections between Markov perfect equilibria and the Nash-Lasry-Lions master equation, both with objectives different from ours.
    
    \item \textbf{Rigorous robust objective functions with dynamic programming characterization.}
    We define robust objective functions and equilibrium concepts based on robust optimization principles combined with dynamic programming ideas. Our framework allows adversarial selection of transition probabilities that may depend on specific state-distribution pairs $(x,v)$, capturing the economic intuition that different combinations of individual states and population distributions may yield different worst-case scenarios. Importantly, we prove that equilibria can be equivalently characterized through a \emph{one-shot optimality condition} (Proposition~\ref{prop:one_shot_eps}), which simplifies both theoretical analysis and computational complexity. 
    
    \citet{langner2024markov} also study MFGs under model uncertainty, but they treat transition probabilities as an \emph{endogenous} part of the equilibrium definition. In addition, their MFG equilibrium relies on a central planner's precommitment, which raises two concerns. First, the solution is likely time-inconsistent; second, the framework cannot capture the \emph{individual} interest of agents (see Example \ref{exm:com}). 
    In contrast, our framework treats transition probabilities as exogenously determined by objective but unknown system dynamics, and the agents solve their individual robust optimization problems and adopt a strategy under the worst-case scenarios. As a result, our formulation naturally yields a time-consistent solution, which adapts to the \emph{realized} environment over time. We provide a detailed comparison between our work and \cite{langner2024markov} in Section~\ref{sub:com}. 

    \item \textbf{Convergence results between finite-agent games and MFGs.} We establish the asymptotic relationship between $N$-agent games and MFGs under model uncertainty, with the following two key results:
    \begin{itemize}
        \item \textbf{Approximation} (Theorem~\ref{thm:mf-limit}): Any mean-field equilibrium in the space of continuous strategies constitutes an $\varepsilon$-Nash equilibrium for sufficiently large finite populations, providing theoretical justification for using mean-field equilibria as practical solutions for large-scale games.
        
        \item \textbf{Convergence} (Theorem~\ref{thm:convergence-mf}): Equilibria of finite-agent games converge to equilibria of MFGs, as the number of agents increases to infinity.
    \end{itemize}
    These results fully justify the MFG framework as both an approximation tool and a limiting description for large but finite-agent games under model uncertainty.
    
    \item \textbf{Existence of equilibria for finite-agent games.} We establish the existence of stationary equilibria for $N$-agent games (Theorem~\ref{thm:n-agent-existence}) using Kakutani's fixed-point theorem. Combined with the convergence result, this provides an indirect pathway to the existence of MFG equilibria: any convergent sequence of $N$-agent equilibria yields a mean-field equilibrium. To demonstrate that MFG equilibria (Definition~\ref{def:eq}) can indeed exist, we construct a solvable two-state MFG example (Example~\ref{exm:MFG}) and derive the equilibrium strategy, worst-case transition kernel, and equilibrium value function, all in closed form. 
\end{enumerate}

The remainder of this paper is structured as follows. 
Section~\ref{sec:inf_MFG} introduces the mathematical framework for both mean-field and $N$-agent games under model uncertainty, with the main convergence and existence results in Subsection~\ref{sub:main}. Section~\ref{sec:exm} provides an example of MFG with closed-form solutions and conducts a detailed comparison with \citet{langner2024markov}. Section~\ref{sec:proof} collects the proofs of all main theorems. Section~\ref{sec:conclusion} concludes the paper. Appendix~\ref{app:ass} provides additional technical results on the assumptions imposed on the uncertainty set.

\section{MFG and $N$-Agent Game under Model Uncertainty}
\label{sec:inf_MFG}

\subsection{MFG Setup}
\label{sub:MFG}

In this subsection, we formally introduce the infinite-horizon MFG under model uncertainty, with $\Tb =\{0,1,2,\cdots\}$. Recall that under this MFG setup, a representative agent (called Agent $i$) applies a strategy $\pi^i$, while all other agents in the population apply a common strategy $\pi$. In Subsection \ref{sub:framework}, we define the representative agent's (Agent $i$'s) utility function $V^{\pi^i, \pi}_{\mathrm{no}}$, \emph{without} model uncertainty, by \eqref{eq:V_no}. When model uncertainty is incorporated, we follow the ``worst-case'' approach and define  Agent $i$'s \emph{robust utility function} $V^{\pi^i, \pi}$ intuitively as the infimum over some uncertainty set of transition probabilities. We first provide a formal definition of $V^{\pi^i, \pi}$.

To start, we introduce a set-valued mapping $\Ff: (x, u, v) \in \Dc \times U \times \Pc(\Dc) \mapsto \Ff(x, u, v) \subseteq \Pc(\Dc)$, which captures distributional uncertainty in the next period, given the current information $(x, u, v)$. Specifically, each element of $\Ff(x, u, v)$ represents a possible transition distribution for an agent whose current state is $x \in \Dc$ and whose control value is $u \in U$, when the state distribution of the population is $v \in \Pc(\Dc)$. 

We next define a function $\Pf$, which maps from $\Pc(\Dc)$ (the space of all population distributions $v$) to the product space $\prod_{(x, u) \in \Dc \times U} \Pc(\Dc)$, by 
\begin{align}
	\label{eq:Pf}
	\Pf(v) = 
	\left\{ \bP = \{ p(x,u) \}_{(x,u) \in \Dc \times U} \,\Big|\, 
	\begin{array}{l}
		p(x,u) \in \Ff(x, u, v), \,\forall \, (x,u) \in \Dc \times U, \\[3pt]
		p(\cdot, \cdot): \Dc \times U \to \Pc(\Dc) \text{ is  $L$-Lipschitz continuous}
	\end{array}
	\right\},
\end{align}
where the $L$-Lipschitz continuity condition requires (for some $L > 0$) 
\begin{align}
\label{eq:F_Lip}
	d_{W_1} \big( p(x_1, u_1), p(x_2, u_2)  \big) \le L \big(d_\Dc(x_1, x_2) + |u_1 - u_2|\big)
\end{align}
for all $(x_1,u_1), (x_2,u_2) \in \Dc \times U$.\footnote{$d_{W_1}$ denotes the Wasserstein-1 distance on $\Pc(\Dc)$, and $d_\Dc$ denotes the discrete metric over the state space $\Dc$.
The constant $L$ in \eqref{eq:F_Lip} is a model parameter that reflects the degree of regularity imposed on transition kernels. In applications where the uncertainty set $\Ff$ admits a Wasserstein ball structure, a natural choice is to set $L$ equal to the Lipschitz constant of the ball's center function (see Appendix~\ref{app:ass}).}

To understand the definition of $\Pf$ in \eqref{eq:Pf}, note that at any time, the population distribution $v$ is unique, but agents may be in any state $x \in \Dc$, and the realized (or sampled) value of their control may take any value $u \in U$ (because they are allowed to apply \emph{relaxed} controls).
This explains why the uncertainty set includes all possible transition probabilities over $(x, u) \in \Dc \times U$, while keeping $v \in \Pc(\Dc)$ fixed.
Therefore, every element $\bP$ in $\Pf(v)$ provides a complete characterization of the transition for all agents when the current distribution of the population is $v$, and we refer to each $\bP$ as a (full) \emph{transition kernel}. As time $t$ evolves, so does the population distribution $\mu_t$, and the uncertainty set $\Pf(\mu_t)$ clearly depends on $\mu_t$; to emphasize this dependence, we often write the element in $\Pf(\mu_t)$ as $\bP(\mu_t) = \{ p(x,u, \mu_t) \}_{(x,u) \in \Dc \times U}$.

We now follow an induction approach to formally characterize the uncertainty set at every time $t \in \Tb$ and specify the probability measure induced by its elements.
At time 0, suppose that the initial state distribution of the population is given as $\mu_0 \in \Pc(\Dc)$; the corresponding uncertainty set $\Pf_0$ is given by $\Pf_0 = \Pf(\mu_0)$, where $\Pf(\cdot)$ is defined in \eqref{eq:Pf}. For every $\bP_0 := \bP_0(\mu_0) = \{p_0(x, u, \mu_0)\}_{(x, u) \in \Dc \times U} \in \Pf_0$, it induces a probability measure, $\Pb^{\bP_0}$, under which the state transition of Agent $i$ (who applies a relaxed control $\pi^i$) from time 0 to time 1 is governed by 
\begin{align}
	\label{eq:X1}
	\Pb^{\bP_0}(X_{1}^i &= y | X_0^i = x) = \int_U \, p_0(x, u, \mu_0)[y] \, \pi_0^i (\dd u), \quad x, y \in \Dc, 
\end{align}
and the population distribution at time 1 is given by
\begin{align}
	\label{eq:mu1}
	\mu_{1} [y] = \sum_{x \in \Dc} \int_U \, \mu_0[x] \cdot p_0(x, u, \mu_0)[y] \, \pi_0(\dd u),  \quad y \in \Dc.
\end{align}

With $\mu_1$ obtained in \eqref{eq:mu1}, we follow the same procedure to define the uncertainty set at time 1 by $\Pf_1 := \Pf(\mu_1)$. Then, every $\bP_1 := \bP_1(\mu_1) = \{p_1(x, u, \mu_1)\}_{(x, u) \in \Dc \times U} \in \Pf_1$ induces a probability measure, $\Pb^{\bP_1}$, such that the transition of Agent $i$ and the population distribution $\mu_2$ satisfy similar equations as \eqref{eq:X1} and \eqref{eq:mu1}, respectively, but with $p_0(\cdot, \cdot, \mu_0)$ replaced by $p_1(\cdot, \cdot, \mu_1)$. Proceeding recursively, we define $\Pf_t := \Pf(\mu_t)$ and $\Pb^{\bP_t}$ for all $t \in \Tb$. Then, we define the uncertainty set $\overline{\Pf}^{\infty}$ as the ``aggregate'' of all $\Pf_t$ over the infinite time horizon $\Tb$, 
\begin{align*}
	\overline{\Pf}^{\infty} := \Pf_0 \otimes \Pf_1 \otimes \cdots 
\end{align*}
and for every $\overline{\bP}^{\infty} = \{ \bP_t \}_{t \in \Tb} \in \overline{\Pf}^{\infty}$, we define a probability measure $\Pb^{\overline{\bP}^{\infty}}$ as the product measure of all $\Pb^{\bP_t}$, under which the state transition of Agent $i$ satisfies 
\begin{align}
	\label{eq:X}
	\Pb^{\overline{\bP}^{\infty}} (X_{t+1}^i = y | X_t^i = x) = \int_U \, p_t(x, u, \mu_t)[y] \, \pi_t^i (\dd u), 
\end{align} 
and the population distribution evolves according to
\begin{align}
	\label{eq:mu}
	\mu_{t+1} [y] = \sum_{x \in \Dc} \int_U \, \mu_t[x] \cdot p_t(x, u, \mu_t)[y] \, \pi_t(\dd u)
\end{align} 
for all $x, y \in \Dc$ and $t \in \Tb$, where the initial conditions, $X_0^i \in \Dc$ and $\mu_0 \in \Pc(\Dc)$, are fixed.

With the above preparation, we define Agent $i$'s robust utility function $V^{\pi^i, \pi}$ by
\begin{align}
	\label{eq:V_inf}
	V^{\pi^i, \pi} (x, v) &= \inf_{\overline{\bP}^{\infty} \in \overline{\Pf}^{\infty}} \; \Eb^{\overline{\bP}^{\infty}}_{x, v} \left[ \sum_{t=0}^{\infty} \rho^t \cdot r \left(X_t^i,  \pi_t^i,  \mu_t, X_{t+1}^i \right)\right] 
\end{align}
for all $(x, v) \in \Delta = \Dc \times \Pc(\Dc)$, 
where $\Eb^{\overline{\bP}^{\infty}}_{x, v}$ denotes the conditional expectation under $\Pb^{\overline{\bP}^{\infty}}$ given $X_0^i = x$ and $\mu_0 = v$, $X^i$ and $\mu$ evolve according to \eqref{eq:X} and \eqref{eq:mu}, respectively, and $\rho \in (0,1)$ is the discounting factor. We impose the following assumption on the (one-period) homogeneous reward function $r$.

\begin{assumption}
	\label{ass:r}
	The reward function $r: (x, u, v, y) \in \Dc \times U\times \mathcal{P}(\Dc)\times \Dc \mapsto r(x, u, v, y)\in \Rb$ is bounded, and Lipschitz continuous in the arguments $ v \in \Pc(\Dc)$ and $u\in U$, 
	in the sense that there exist constants $C_r > 0$ and $L_r > 0$ such that for every 
	$x, y \in \Dc$, $u_1, u_2 \in U$, and $v_1, v_2 \in \Pc(\Dc)$,
	\begin{align*}
		|r(x, u, v, y)| \leq C_r \quad \text{and} \quad |r(x,u_1, v_1, y) - r(x, u_2, v_2, y)| 
		\leq L_r \cdot ( d_{W_1}(v_1, v_2) +|u_1-u_2|).
	\end{align*}
\end{assumption}

For tractability, we also need to impose regularity assumptions on the uncertainty mapping $\Pf$ in \eqref{eq:Pf}. To this end, we first equip the range space of $\Pf$---the product space $\prod_{(x, u) \in \Dc \times U} \Pc(\Dc)$---with the supremum metric: for all $\bP = \{p(x, u)\}_{(x, u) \in \Dc \times U}$ and $\mathbf{Q} = \{q(x, u)\}_{(x, u) \in \Dc \times U}$
 in this space,
\begin{align*}
	d_{\infty}(\bP, \mathbf{Q}) = \sup_{(x, u) \in \Dc \times U} \, d_{W_1} \left(p(x, u), q(x, u)\right).
\end{align*}
We state the assumptions on $\Pf$ below. 

\begin{assumption}
	\label{ass:uncertainty}
	The mapping $\Pf: v \in \Pc(\Dc) \mapsto \Pf (v) \subseteq \prod_{(x, u) \in \Dc \times U} \Pc(\Dc)$ is non-empty, convex-valued, 
	compact-valued, and continuous (in the Hausdorff metric induced by $d_\infty$). 
\end{assumption}

Assumption \ref{ass:uncertainty} plays a key role in the subsequent analysis because it provides the topological structure necessary for establishing
the existence of equilibria via a fixed-point argument. Recall that $\Pf$ is defined via $\Ff$. For technical convenience, Assumption~\ref{ass:uncertainty} imposes conditions on $\Pf$ rather than directly on $\Ff$. Assumption~\ref{ass:uncertainty} is satisfied under a broad range of settings in the robust optimization literature. Two important cases are highlighted below.
\begin{itemize}
	\item \emph{Wasserstein ball structure.} 
    When $\Ff$ admits a Wasserstein ball structure, as commonly studied in distributionally robust optimization (see \citet{mohajerin2018data} and \citet{gao2023distributionally}), the required properties follow from Proposition~\ref{prop:mapping} in Appendix~\ref{app:ass}.
	
	\item \emph{Distribution-independent uncertainty.} 
    When the uncertainty set does not depend on the population distribution $v$, as typically assumed in the robust MDP literature (see \citet{iyengar2005robust} and \citet{nilim2005robust}), Assumption~\ref{ass:uncertainty} reduces to standard regularity conditions on $\Ff$ that are straightforward to verify.
\end{itemize}

Next, we define an equilibrium to the infinite-horizon MFG under model uncertainty. To that end, we first discuss the set of admissible strategies. 
Recall from the standard framework in Subsection \ref{sub:framework} that agents' strategies are Markovian,  depending on the state-distribution pair, and take values in the space of $\Pc(U)$ (that is, they are relaxed controls). We thus define the space $\Pi$ as follows:
\begin{align}
	\label{eq:Pi}
	\Pi = \{ \phi : \phi \text{ is Borel measurable, mapping from  $(x, v) \in \Delta$ to $\Pc(U)$} \}.
\end{align}
Given the observed state-distribution pair $(X_t^i, \mu_t)$, each $\phi$ in $\Pi$ induces a relaxed control $\phi(X_t^i, \mu_t)$  for Agent $i$ at time $t$. 
\begin{definition}
	\label{def:ad_MFG}
	A strategy $\pi^i = \{\pi^i_t\}_{t \in \Tb}$ is called \emph{admissible} for Agent $i$ if for every $t \in \Tb$, there exists a mapping $\boldsymbol{\pi^i_t} \in \Pi$ such that $\pi_t^i = \boldsymbol{\pi^i_t}(X_t^i, \mu_t)$,  where $X_t^i$ is Agent $i$'s state governed by \eqref{eq:X}, and $\mu_t$ is the population distribution as in \eqref{eq:mu}, both at time $t$. 	
	An admissible strategy $\pi^i = \{\pi^i_t\}_{t \in \Tb}$ is called \emph{time-homogeneous} if the mappings $\boldsymbol{\pi^i_t}$ are the same for all $t \in \Tb$; that is, there exists one $\boldsymbol{\pi^i} \in \Pi$ such that $\pi^i_t = \boldsymbol{\pi^i} (X_t^i, \mu_t)$ for all $t \in \Tb$. 
\end{definition}

In Definition \ref{def:ad_MFG}, we implicitly assume that all other agents apply a common strategy $\pi$ under which the flow of population distributions $\mu = \{\mu_t\}_{t \in \Tb}$ evolves by \eqref{eq:mu}, with $\mu_0 \in \Pc(\Dc)$ given. Because both $X_t^i$ and $\mu_t$ are observable at time $t$, Agent $i$'s strategy at $t$, $\pi_t^i$, is uniquely determined by a mapping $\boldsymbol{\pi^i_t} \in \Pi$. Given this one-to-one correspondence, we use $\pi^i_t$ (without bold font) to denote both a mapping in $\Pi$ and the admissible strategy it generates.
With this ``abuse'' of notation, for an admissible strategy $\pi^i = \{\pi^i_t\}_{t \in \Tb}$, we have $\pi^i_t \in \Pi$ for all $t \in \Tb$. For convenience, we introduce the following shorthand notation:
\begin{align*}
	\pi^i \text{ is admissible } \Leftrightarrow \pi^i \in \Pi^{\Tb} \quad \text{and} \quad 
	\pi^i \text{ is admissible and time-homogeneous} \Leftrightarrow \pi^i \in \Pi.
\end{align*}

The MFG we consider here is \emph{homogeneous} in two aspects. 
First, although time-dependent strategies $\pi^i = \{\pi^i_t\}_{t \in \Tb} \in \Pi^{\Tb}$ are allowed, the infinite horizon and time-homogeneity of the objective and dynamics suggest that an equilibrium strategy $\pi^{i,*}$ for Agent $i$ should be \emph{time-homogeneous} (i.e., $\pi^{i,*} \in \Pi$). 
Second, because all agents are ``\emph{identical}'', sharing the same objective in \eqref{eq:V_inf} and transition dynamics in  \eqref{eq:X}, it is reasonable to conjecture that under equilibrium, all agents should apply the \emph{same} control (that is, $\pi^{i,*}  \equiv \pi^*$ for all $i$).
This leads to the following definition of \emph{stationary} (Nash) equilibrium for the infinite-horizon MFG.	

\begin{definition}
	\label{def:eq}
	Suppose that Agent $i$ follows an admissible strategy $\pi^i \in \Pi^{\Tb}$, while all other agents apply a common admissible, time-homogeneous strategy $\pi^* \in \Pi$. $\pi^*$ is called a stationary equilibrium if it satisfies the following full optimality condition: 
	\begin{align}
		\label{eq:pi_op}
		V^{\pi^*,\pi^*}(x, v) = \sup_{\pi^i \in \Pi^{\Tb}} \, V^{\pi^i, \pi^*}(x, v), \quad \text{for all } (x, v) \in \Delta,
	\end{align}
	where $V^{\pi^i, \pi}$ is defined by \eqref{eq:V_inf}.	
	For $\varepsilon \geq 0$, we call $\pi^{*,\varepsilon}$ a stationary $\varepsilon$-equilibrium if \eqref{eq:pi_op} is replaced by 
	\begin{align}
		\label{eq:pi_eps}
		V^{\pi^{*,\varepsilon}, \pi^{*,\varepsilon}}(x, v) \ge \sup_{\pi^i \in \Pi^{\Tb}} \, V^{\pi^i, \pi^{*,\varepsilon}}(x, v) - \varepsilon.
	\end{align} 
\end{definition}

In Definition \ref{def:eq}, we also define $\varepsilon$-equilibrium $\pi^{*,\varepsilon}$ by \eqref{eq:pi_eps} for computational advantage and for handling the possible cases where an exact equilibrium may not exist.

\begin{remark}[Finite horizon extension]
Although this paper focuses on the infinite-horizon setting, our framework can be naturally extended to finite-horizon problems with a terminal time 
$T \in \mathbb{N}$. The main adjustments required are:
(i) replacing stationary strategies with time-dependent policies 
$(\pi_0, \ldots, \pi_{T-1})$;
(ii) incorporating a terminal reward function 
$g: \Delta \to \mathbb{R}$; and
(iii) using backward induction in place of fixed-point characterizations.
The convergence results (Theorems \ref{thm:mf-limit} and \ref{thm:convergence-mf}) and the existence result (Theorem \ref{thm:n-agent-existence}) remain valid in this setting, with proofs following analogous lines. Indeed, the comparison example in Subsection~\ref{sub:com} is formulated and solved in a finite-horizon setting.

Our focus on the infinite-horizon case stems from its analytical advantages: it allows us to work with stationary equilibria, which provide a natural and often more tractable solution concept for games without a predetermined endpoint.
\end{remark}

\subsection{Characterization of MFG Equilibrium}
\label{sub:MFG_equ}

In this subsection, we aim to obtain an analytical characterization of the stationary equilibrium to the MFG defined by \eqref{eq:pi_op}. The first challenge we face here is that the robust utility function in \eqref{eq:V_inf}  involves an infinite-dimensional optimization problem over $\overline{\Pf}^{\infty} = \otimes_{t = 0}^{\infty} \, \Pf_t$, and this definition does not yield a computational method for determining the ``worst scenario'' in  $\overline{\Pf}^{\infty}$. To address this, we show that $V^{\pi^i, \pi}$ in \eqref{eq:V_inf} can be equivalently characterized by the following DPP:
\begin{align}
	\label{eq:V}
	V^{\pi^i, \pi} (x, v) &= \inf_{\bP_0 \in \Pf_0} \; \Eb^{\bP_0}_{x, v} \left[  r \left(X_0^i,  \pi_0^i,  \mu_0, X_1^i \right) + \rho \cdot V^{\pi^i, \pi}(X_1^i, \mu_1)\right], 
\end{align}
where the subscript in $\Eb^{\bP_0}_{x, v}$ denotes the initial conditions $ X_0^i = x \in \Dc$ and $\mu_0 = v \in \Pc(\Dc)$, and the uncertainty set is $\Pf_0 = \Pf(v)$. Note that for $V^{\pi^i, \pi}(X_1^i, \mu_1)$ on the right-hand side of \eqref{eq:V}, the strategies are the restricted versions of $\pi^i$ and $\pi$ over $t \in \{1,2,\cdots\}$, and we write $V^{\pi^i, \pi}(X_1^i, \mu_1)$, instead of the more accurate notation $V^{ \{\pi^i_t\}_{t=1,2,\cdots}, \{\pi_t\}_{t=1,2,\cdots}}(X_1^i, \mu_1)$, for simplicity.

While the above characterization of $V^{\pi^i, \pi}$ is intuitive, we have not yet established the existence and uniqueness of a solution to the DPP equation in \eqref{eq:V}. The result below confirms this conjectured equivalence.

\begin{proposition}
	\label{prop:V_same}
	For all admissible strategies $\pi^i, \pi \in \Pi^{\Tb}$ and all $(x,v) \in \Delta$, the robust utility function $V^{\pi^i, \pi}$ in \eqref{eq:V_inf} satisfies the DPP in \eqref{eq:V}.  
	Moreover, $V^{\pi^i, \pi}$ in \eqref{eq:V_inf} is the unique bounded function on $\Pi^{\mathbb{T}} \times \Pi^{\mathbb{T}} \times \Delta$ to the DPP in \eqref{eq:V}.
\end{proposition}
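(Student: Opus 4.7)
The plan is to establish uniqueness via a Banach contraction argument and verify the DPP via a tower-property decomposition, handling the ``$\le$'' direction with an adaptive near-optimal selection. Define the Bellman operator $\mathcal{T}$ on the Banach space of bounded functions $W: \Pi^{\Tb} \times \Pi^{\Tb} \times \Delta \to \Rb$ (equipped with the supremum norm) by
\begin{align*}
(\mathcal{T} W)(\pi^i, \pi, x, v) := \inf_{\bP \in \Pf(v)} \Eb^{\bP}_{x,v}\!\left[ r(x, \pi_0^i(x,v), v, X_1^i) + \rho\, W(\theta\pi^i, \theta\pi, X_1^i, \mu_1) \right],
\end{align*}
where $\theta$ is the left time-shift on strategy sequences ($(\theta \pi)_t := \pi_{t+1}$). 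The standard inequality $|\inf_\alpha f(\alpha) - \inf_\alpha g(\alpha)| \le \sup_\alpha |f(\alpha) - g(\alpha)|$, together with the fact that any conditional expectation is a $1$-contraction on the sup-norm, yields $\|\mathcal{T} W_1 - \mathcal{T} W_2\|_\infty \le \rho \|W_1 - W_2\|_\infty$; Banach's fixed-point theorem then produces a unique bounded fixed point $V^\star$ of $\mathcal{T}$. Since $V^{\pi^i, \pi}$ is bounded by $C_r / (1-\rho)$ by Assumption~\ref{ass:r}, it suffices to show that it is a fixed point of $\mathcal{T}$.

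For the ``$\ge$'' direction, apply the tower property to any $\overline{\bP}^\infty = (\bP_0, \bP_1, \ldots) \in \overline{\Pf}^\infty$:
\begin{align*}
\Eb^{\overline{\bP}^\infty}_{x,v}\!\left[\sum_{t=0}^\infty \rho^t r_t\right] = \Eb^{\bP_0}_{x,v}\!\left[r_0 + \rho\, \Eb^{(\bP_1, \bP_2, \ldots)}\!\!\left[\sum_{s=0}^\infty \rho^s r_{s+1} \,\Big|\, X_1^i, \mu_1\right]\right],
\end{align*}
and observe that the inner conditional expectation, by the very definition of $V^{\theta\pi^i, \theta\pi}$ as an infimum over all tail sequences starting from $\mu_1$, is pointwise bounded below by $V^{\theta\pi^i, \theta\pi}(X_1^i, \mu_1)$. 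Taking the infimum over $\overline{\bP}^\infty$ on the left-hand side then gives $V^{\pi^i, \pi}(x, v) \ge (\mathcal{T} V^{\pi^i, \pi})(\pi^i, \pi, x, v)$.

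The ``$\le$'' direction is the main obstacle. Given $\bP_0 \in \Pf(v)$ and $\varepsilon > 0$, for each $y \in \Dc$ select a tail sequence $(\bP_1^y, \bP_2^y, \ldots)$ that is $\varepsilon$-near-optimal for $V^{\theta\pi^i, \theta\pi}(y, \mu_1)$; finiteness of $\Dc$ obviates any nontrivial measurable selection. Glue these into an adapted tail sequence that activates $(\bP_1^y, \bP_2^y, \ldots)$ on the event $\{X_1^i = y\}$. Such an adapted selection is admissible under the recursive construction of $\overline{\Pf}^\infty$: once $\bP_0$ is fixed, $\mu_1$ is determined deterministically from \eqref{eq:mu1} and does not depend on $X_1^i$, so the uncertainty set $\Pf_1 = \Pf(\mu_1)$ is common across all $y$, and the recursion proceeds identically on each realization branch. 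The resulting expected total reward is at most $\Eb^{\bP_0}[r_0 + \rho\, V^{\theta\pi^i, \theta\pi}(X_1^i, \mu_1)] + \rho \varepsilon$; minimizing over $\bP_0$ and sending $\varepsilon \downarrow 0$ completes the reverse inequality. Combining both directions, $V^{\pi^i, \pi}$ coincides with $V^\star$, which proves both the DPP and uniqueness.
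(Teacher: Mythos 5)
Your uniqueness argument is correct and is essentially the paper's: the paper compares two putative bounded solutions directly via the same $\rho$-contraction estimate, whereas you package it as Banach's fixed-point theorem for the shifted Bellman operator; the two are interchangeable. Your ``$\ge$'' direction (tower property plus the pointwise lower bound on the conditional tail cost) is also sound. You are right that the ``$\le$'' direction is the only nontrivial step—the paper's entire proof of the first claim is the assertion that it ``follows directly from the definition.''

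The gap is in your resolution of that step. The set $\overline{\Pf}^{\infty} = \Pf_0 \otimes \Pf_1 \otimes \cdots$ over which the infimum in \eqref{eq:V_inf} is taken consists of \emph{deterministic} sequences of kernels: each $\bP_t$ is a single element of $\Pf(\mu_t)$, chosen without reference to the realized path of $X^i$, and $\Pb^{\overline{\bP}^{\infty}}$ is the product of the resulting one-step measures. Your glued object---activate $(\bP_1^y,\bP_2^y,\ldots)$ on $\{X_1^i=y\}$---is a \emph{random}, $X_1^i$-adapted tail. The observation that $\Pf_1=\Pf(\mu_1)$ is the same set for every $y$ only shows that each branch is individually admissible; it does not make the branch-dependent selection a single element of $\Pf_1\otimes\Pf_2\otimes\cdots$. (Indeed, if $\bP_1$ could depend on $X_1^i$, then $\mu_2$ would become random, which is incompatible with the deterministic recursion that defines $\Pf_2=\Pf(\mu_2)$ in the first place.) Consequently exhibiting this object does not upper-bound the infimum in \eqref{eq:V_inf}. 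Without adaptivity one needs a \emph{single} tail sequence that is simultaneously $\varepsilon$-optimal for every $y$ in the support of $X_1^i$, and that is not automatic here: the tail cost from state $y$ depends on the common population flow $\mu_2,\mu_3,\ldots$ and on kernel components that are coupled across states through the Lipschitz constraint in \eqref{eq:Pf}, so the worst-case tails for different starting states can genuinely conflict. What comes for free is only $\inf_{\mathrm{tail}}\sum_y \Pb(X_1^i=y)\,J_y(\mathrm{tail}) \ge \sum_y\Pb(X_1^i=y)\,\inf_{\mathrm{tail}}J_y(\mathrm{tail})$; the reverse inequality is exactly what must be proved, and your argument obtains it by quietly granting the adversary an adapted strategy. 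To close the gap you must either establish the existence of a common near-optimal tail, or argue that $\overline{\Pf}^{\infty}$ should be read as a set of adapted kernel processes---in which case the gluing is legitimate, but the construction of $\Pb^{\overline{\bP}^{\infty}}$ and of the flow $\{\mu_t\}$ in \eqref{eq:mu1} must be revisited.
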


\begin{proof}
	The claim that $V^{\pi^i, \pi}$ in \eqref{eq:V_inf} satisfies the DPP in \eqref{eq:V} follows directly from the definition, which also establishes the existence of a solution to \eqref{eq:V}. Thus, it remains to show uniqueness. To that end, assume to the contrary that there exists another bounded function, $W: \Pi^{\mathbb{T}} \times \Pi^{\mathbb{T}} \times \Delta \to \mathbb{R}$,  solving \eqref{eq:V}. Define the distance between the two solutions by
	\begin{align*}
		d(V, W) = \sup_{\pi^i \in \Pi^{\mathbb{T}}, \pi \in \Pi^{\mathbb{T}}, (x,v) \in \Delta}  \big| V^{\pi^i,\pi}(x,v) - W^{\pi^i,\pi}(x,v) \big|.
	\end{align*}
    For every fixed $(\pi^i, \pi, x, v)$, using the DPP and the standard inf-sup inequality yields 
    \begin{align*}
		|V^{\pi^i,\pi}(x,v) - W^{\pi^i,\pi}(x,v)| 
		&\leq \sup_{\bP \in \Pf(v)} \mathbb{E}^{\bP} \left[\rho \left|V^{\pi^i_{1:\infty},\pi_{1:\infty}}(X_1^i, \mu_1) - W^{\pi^i_{1:\infty},\pi_{1:\infty}}(X_1^i,\mu_1)\right|\right] \\
		&\leq \rho \, \sup_{X_1^i, \mu_1} \left|V^{\pi^i_{1:\infty},\pi_{1:\infty}}(X_1^i, \mu_1) - W^{\pi^i_{1:\infty},\pi_{1:\infty}}(X_1^i,\mu_1)\right|,
	\end{align*}
    where $\pi^i_{1:\infty}$ ($\pi_{1:\infty}$) denotes the restriction of $\pi^i$ ($\pi$) over all $t = 1, 2, \cdots$. Because $(\pi^i, \pi)$ ranges over $\Pi^{\mathbb{T}} \times \Pi^{\mathbb{T}}$, so does their tail strategies $(\pi^i_{1:\infty}, \pi_{1:\infty})$. Therefore, 
    \begin{align*}
    	\sup_{X_1^i, \mu_1} \left|V^{\pi^i_{1:\infty},\pi_{1:\infty}}(X_1^i, \mu_1) - W^{\pi^i_{1:\infty},\pi_{1:\infty}}(X_1^i,\mu_1)\right| \le d(V, W).
    \end{align*}
    Finally, taking supremum over all $(\pi^i, \pi, x, v)$ yields $d(V, W) \leq \rho \cdot d(V, W)$. Because $\rho \in (0,1)$, $d(V, W) = 0$, implying $V \equiv W$. 
\end{proof}

Thanks to Proposition \ref{prop:V_same}, the two definitions of $V^{\pi^i, \pi}$ in \eqref{eq:V_inf} and \eqref{eq:V} are equivalent. However, the second one in \eqref{eq:V}  possesses two significant advantages over \eqref{eq:V_inf}. The first advantage is that the optimization problem in \eqref{eq:V} is solved over $\Pf_0$, whereas the one in \eqref{eq:V_inf} is solved over $\overline{\Pf}^{\infty} = \Pf_0 \otimes \Pf_1 \otimes \Pf_2 \otimes \cdots$. Second, the DPP in \eqref{eq:V}  naturally yields an iterative algorithm that can be used to compute the robust utility function $V^{\pi^i, \pi}$ as the fixed point of the Bellman operator $\mathcal{T}^{\pi^i, \pi}$. Therefore, we mainly work with the DPP-based definition in \eqref{eq:V} hereafter, due to its tractability.

We proceed to discuss how we seek a stationary equilibrium $\pi^* \in \Pi$ in Definition \ref{def:eq}. 
However, obtaining $\pi^*$ directly from \eqref{eq:pi_op} is challenging: we lack an \emph{a priori} ansatz for $\pi^*$, and directly solving the supremum problem appears intractable.
Indeed, we would first solve $\sup\limits_{\pi^i} \, V^{\pi^i, \pi}(x, v)$ for every fixed $\pi$ (adopted by all other agents) and denote its solution by $\widehat{\pi}^i$; next, we treat $\widehat{\pi}^i := \widehat{\pi}^i(\pi)$  as a functional of $\pi$ and obtain $\pi^*$ as the fixed point of the maximizing functional, $\pi^* =  \widehat{\pi}^i(\pi^*)$. Despite this intuition, we do not pursue this approach directly.

Before introducing our approach, recall from Definition~\ref{def:ad_MFG} that an admissible strategy $\pi^i$ takes the precise form $\pi^i_t = \pi^i_t(X^i_t, \mu_t)$ for all $t \in \Tb$. The equilibrium condition in \eqref{eq:pi_op} is termed the ``full optimality condition'' because Agent~$i$ is allowed to deviate from $\pi^*$ at \emph{all} time points $t \in \Tb$. Intuitively, however, it is not necessary to test deviations at every instant; due to the time-homogeneous nature of the mean-field game, only a deviation in the initial period needs to be considered. This observation motivates the following notion of a perturbed strategy. For any admissible strategy $\pi \in \Pi$, we define a \emph{one-shot perturbation} of $\pi$ by
\begin{align}
	\label{eq:pi_pert}
	\phi \otimes_1 \pi = \begin{cases}
		\phi(X_0^i, \mu_0), & t = 0 ,\\
		\pi(X_t^i, \mu_t), & t = 1, 2, \cdots
	\end{cases}
\end{align}
for some $\phi \in \Pi$. The perturbed strategy $\phi \otimes_1 \pi$ differs from $\pi$ only in the first period, and they are identical in all other periods. Assuming that $\pi^* \in \Pi$ is an equilibrium strategy, and that Agent $i$ applies $\pi^i \otimes_1 \pi^*$ for some $\pi^i \in \Pi$, we naturally conjecture that optimizing Agent $i$'s robust utility over all one-shot perturbed strategies should return a solution equal to $\pi^*$. Below we formally prove a stronger version of this conjecture for $\varepsilon$-equilibrium $\pi^{*,\varepsilon}$ defined by \eqref{eq:pi_eps}.

\begin{proposition}
	\label{prop:one_shot_eps}
	Let $\varepsilon\geq0$ and $\pi^{*,\varepsilon} \in \Pi$ be an admissible, time-homogeneous strategy. Then, the following results hold:
	\begin{enumerate}
		\item[(i)] If $\pi^{*,\varepsilon}$ is a stationary $\varepsilon$-equilibrium, then it satisfies the $\varepsilon$-one-shot optimality condition:
		\begin{align}
			\label{eq:pi_op_one_eps_nec}
			V^{\pi^{*,\varepsilon},\pi^{*,\varepsilon}}(x, v) \ge \sup_{\pi^i \in \Pi} \; V^{\pi^i \otimes_1 \pi^{*,\varepsilon}, \pi^{*,\varepsilon}}(x, v) - \varepsilon, \quad \text{for all } (x, v) \in \Delta.
		\end{align}
		\item[(ii)] Conversely, if $\pi^{*,\varepsilon}$ satisfies \eqref{eq:pi_op_one_eps_nec}, then it is a stationary $\frac{2\varepsilon}{1 - \rho}$-equilibrium.
	\end{enumerate}
\end{proposition}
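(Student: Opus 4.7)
\medskip

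\noindent\textbf{Overall plan.} The two implications call for very different arguments. Part~(i) is essentially a trivial inclusion of strategy classes. Part~(ii) is the substantive direction and will rest on a Bellman-type contraction estimate that combines the one-shot bound \eqref{eq:pi_op_one_eps_nec} with the DPP of Proposition~\ref{prop:V_same}. Throughout I write $V^* := V^{\pi^{*,\varepsilon}, \pi^{*,\varepsilon}}$ for brevity and exploit the fact that $\pi^{*,\varepsilon} \in \Pi$ is time-homogeneous.

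\medskip

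\noindent\textbf{Part (i).} I would simply observe that every mapping $\phi \in \Pi$ induces a one-shot perturbation $\phi \otimes_1 \pi^{*,\varepsilon}$ which is itself an admissible (time-dependent) strategy in $\Pi^{\Tb}$. Hence
$\sup_{\phi \in \Pi} V^{\phi \otimes_1 \pi^{*,\varepsilon}, \pi^{*,\varepsilon}}(x,v) \le \sup_{\pi^i \in \Pi^{\Tb}} V^{\pi^i, \pi^{*,\varepsilon}}(x,v)$, and the $\varepsilon$-equilibrium condition \eqref{eq:pi_eps} instantly yields \eqref{eq:pi_op_one_eps_nec}.

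\medskip

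\noindent\textbf{Part (ii).} I would introduce the uniform deviation gain
\begin{align*}
\delta \;:=\; \sup_{\pi^i \in \Pi^{\Tb},\, (x,v) \in \Delta} \Big( V^{\pi^i, \pi^{*,\varepsilon}}(x,v) - V^*(x,v) \Big),
\end{align*}
which is finite by boundedness of $r$ and Proposition~\ref{prop:V_same}. The entire argument reduces to establishing the contraction $\delta \le \varepsilon + \rho\,\delta$, from which $\delta \le \varepsilon/(1-\rho) \le 2\varepsilon/(1-\rho)$ follows. To derive it, fix $\pi^i \in \Pi^{\Tb}$ and $(x,v) \in \Delta$ and split the gap as
\begin{align*}
V^{\pi^i, \pi^{*,\varepsilon}}(x,v) - V^*(x,v) \;=\; \underbrace{\Big[ V^{\pi^i, \pi^{*,\varepsilon}}(x,v) - V^{\pi^i_0 \otimes_1 \pi^{*,\varepsilon}, \pi^{*,\varepsilon}}(x,v) \Big]}_{\text{(A)}} \;+\; \underbrace{\Big[ V^{\pi^i_0 \otimes_1 \pi^{*,\varepsilon}, \pi^{*,\varepsilon}}(x,v) - V^*(x,v) \Big]}_{\text{(B)}}.
\end{align*}
Term (B) is bounded by $\varepsilon$ by the hypothesis \eqref{eq:pi_op_one_eps_nec} applied to the mapping $\phi = \pi^i_0 \in \Pi$. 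For (A), I would apply the DPP \eqref{eq:V} to both values: both are infima over the \emph{same} set $\Pf(v)$, and their integrands share the identical period-$0$ reward $r(x,\pi^i_0,v,X_1^i)$ since the two strategies coincide at $t=0$; they differ only through the continuation value at time $1$, which is $V^{\pi^i_{1:\infty}, \pi^{*,\varepsilon}}(X_1^i,\mu_1)$ versus $V^*(X_1^i,\mu_1)$. Selecting an $\eta$-minimizer $\bP_0'$ for the smaller of the two DPP infima and evaluating the larger one at the same $\bP_0'$ yields
$\text{(A)} \le \rho\, \mathbb{E}^{\bP_0'}\big[ V^{\pi^i_{1:\infty}, \pi^{*,\varepsilon}}(X_1^i,\mu_1) - V^*(X_1^i,\mu_1)\big] + \eta \le \rho\,\delta + \eta$. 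Letting $\eta \downarrow 0$ and then taking suprema over $\pi^i$ and $(x,v)$ gives the desired contraction.

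\medskip

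\noindent\textbf{Main obstacle.} The delicate step is the ``inf--inf'' comparison in (A): the worst-case kernels minimizing $V^{\pi^i, \pi^{*,\varepsilon}}$ and $V^{\pi^i_0 \otimes_1 \pi^{*,\varepsilon}, \pi^{*,\varepsilon}}$ need not agree, so the infimum cannot be passed through the difference. The near-minimizer trick sketched above resolves this cleanly, but it is the one place where care is required. A minor secondary point is that the continuation tail $\pi^i_{1:\infty}$ is generally time-dependent while \eqref{eq:pi_op_one_eps_nec} is stated only for $\phi \in \Pi$; this causes no trouble because the one-shot condition is invoked solely with the time-homogeneous perturbation $\pi^i_0 \in \Pi$, and the time-dependence of the tail is absorbed into the global constant $\delta$.
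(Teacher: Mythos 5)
Your proof is correct. Part (i) coincides with the paper's argument (one-shot deviations are a subclass of arbitrary deviations). Part (ii), however, takes a genuinely different route. The paper proves by induction that every $k$-period perturbation $\pi^i_{0:k-1}\otimes_k \pi^{*,\varepsilon}$ gains at most $\varepsilon\sum_{j=0}^{k-1}\rho^j$, and then truncates the tail of an arbitrary deviation at a horizon $k$ chosen so that the tail contributes at most another $\varepsilon$; the factor $2$ in $\tfrac{2\varepsilon}{1-\rho}$ comes precisely from this truncation step. You instead define the global deviation gap $\delta$ (finite by the uniform bound $C_r/(1-\rho)$ on all value functions) and derive the single contraction inequality $\delta \le \varepsilon + \rho\delta$ by splitting off the time-$0$ deviation, using \eqref{eq:pi_op_one_eps_nec} for term (B) and an $\eta$-minimizer comparison of the two infima in the DPP \eqref{eq:V} for term (A). The key observations making (A) work are sound: both infima run over the same set $\Pf(v)$, the period-$0$ reward and the law of $(X_1^i,\mu_1)$ agree because the two strategies coincide at $t=0$, and the continuation tail $\pi^i_{1:\infty}$ lies in $\Pi^{\Tb}$ so its gap is absorbed into $\delta$. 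Your argument avoids the tail truncation entirely and yields the sharper constant $\varepsilon/(1-\rho)$, which of course implies the stated $\tfrac{2\varepsilon}{1-\rho}$-equilibrium; the paper's induction is slightly more elementary in that it never needs to take a supremum over the full strategy class before the final step, but your version is cleaner and strengthens the conclusion.
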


\begin{proof}
	\textbf{Part (i).} Because one-shot deviations are special cases of arbitrary deviations, the $\varepsilon$-equilibrium condition immediately implies the $\varepsilon$-one-shot optimality condition.
	
	\textbf{Part (ii).} Suppose that $\pi^{*,\varepsilon}$ satisfies the $\varepsilon$-one-shot optimality condition. We first prove by induction that for all $k$-period perturbed strategies $\pi^i_{0:k-1} \otimes_{k} \pi^{*,\varepsilon}$ (defined similarly as in \eqref{eq:pi_pert}),
	\begin{equation}
		\label{eq:finite-horizon-bound}
		V^{\pi^i_{0:k-1} \otimes_{k} \pi^{*,\varepsilon}, \pi^{*,\varepsilon}}(x,v) \leq V^{\pi^{*,\varepsilon}, \pi^{*,\varepsilon}}(x,v) + \varepsilon \sum_{j=0}^{k-1} \rho^j, \quad \forall (x,v) \in \Delta.
	\end{equation}
	The base case $k=1$ is exactly the $\varepsilon$-one-shot optimality condition. For the inductive step, the DPP in \eqref{eq:V} combined with the induction hypothesis yields
	\begin{align*}
		V^{\pi^i_{0:k} \otimes_{k+1} \pi^{*,\varepsilon},\pi^{*,\varepsilon}}(x,v) 
		&\leq  \inf_{\bP \in \Pf(v)} \mathbb{E}_{x, v}^{\bP} \left[r(x, \pi_0^i, v, X_1^i) + \rho \Big( V^{\pi^{*,\varepsilon}, \pi^{*,\varepsilon}}(X_1^i, \mu_1) + \varepsilon \sum_{j=0}^{k-1} \rho^j \Big) \right] \\
		&\leq V^{\pi^{*,\varepsilon}, \pi^{*,\varepsilon}}(x,v) + \varepsilon \sum_{j=0}^{k} \rho^j.
	\end{align*}
	
	For arbitrary $\pi^i \in \Pi^{\Tb}$, the difference in utility from periods $k$ onward is bounded by $\frac{2C_r \rho^k}{1-\rho}$, where $C_r$ is the bound on $|r|$ from Assumption~\ref{ass:r}.  Choosing $k$ such that $\frac{2C_r \rho^k}{1-\rho} \leq \varepsilon$, we obtain
	\begin{align*}
		V^{\pi^i, \pi^{*,\varepsilon}}(x,v) \le V^{\pi^i_{0:k-1} \otimes_{k} \pi^{*,\varepsilon}, \pi^{*,\varepsilon}}(x,v) + \varepsilon \le V^{\pi^{*,\varepsilon}, \pi^{*,\varepsilon}}(x,v) + \frac{2 \varepsilon}{1 - \rho}. 
	\end{align*}
	Taking supremum over $\pi^i \in \Pi^{\Tb}$ completes the proof.
\end{proof}

\begin{remark}
	As an immediate consequence of Proposition \ref{prop:one_shot_eps}, $\pi^{*} \in \Pi$ is a stationary equilibrium if and only if it satisfies the one-shot optimality condition:
	\begin{align}
		\label{eq:pi_op_one}
		V^{\pi^{*},\pi^{*}}(x, v) = \sup_{\pi^i \in \Pi} \; V^{\pi^i \otimes_1 \pi^{*},\pi^{*}}(x, v), \quad \text{for all } (x, v) \in \Delta.
	\end{align}
	
	For approximate equilibria, Proposition \ref{prop:one_shot_eps} shows that verifying the one-shot optimality condition with error $\varepsilon$ is sufficient to establish a full $\frac{2\varepsilon}{1-\rho}$-equilibrium. This characterization is essential for our convergence results in Subsection~\ref{sub:main}: when establishing that mean-field equilibria provide approximate solutions to $N$-agent games, we can restrict our verification to the simpler one-shot condition rather than checking optimality over all deviations in $\Pi^{\Tb}$.
\end{remark}

\subsection{$N$-Agent Game}
\label{sub:N_agent}

MFG models provide a powerful framework for analyzing large populations, yet their practical relevance hinges on how well MFG equilibria approximate those of finite‑player games. To bridge the MFG formulation under model uncertainty in  Subsection~\ref{sub:MFG} with the finite‑population setting, we now introduce the corresponding $N$‑agent game. This formulation serves as the basis for establishing the approximation result in the next subsection. Throughout, $N$ denotes a positive integer.

Consider an infinite-horizon Markov game with $N$ agents, indexed by $j \in \Nc := \{1, 2, \cdots, N \}$, and all agents share the same reward function $r$ satisfying Assumption \ref{ass:r}. 
We denote Agent $j$'s control and state by $\pi^{N, j} = \{ \pi_t^{N, j}\}_{t \in \Tb}$ and $X^{N, j} = \{ X_t^{N, j}\}_{t \in \Tb}$, respectively, with the initial state $X_0^{N, j} = x^j \in \Dc$ given, for all $j \in \Nc$. Recall that the state space is $\Dc = \{1, 2, \cdots, d\}$ (implying $X_t^{N, j} \in \Dc$ for all $t$ and $j$), and controls are distributions in the action space $U$ (that is, $\pi_t^{N, j} \in \Pc(U)$). At the ``current'' time $t$, all agents' states, $\vec{X}^N_t := \{X_t^{N,1}, \ldots,X_t^{N,N} \}$, are observable, and we define the \emph{empirical distribution} of Markov game at $t$, $\mu_t^N \in \Pc(\Dc)$, by 
\begin{align}
	\label{eq:mu_N}
	\mu_t^N [x] = \frac{1}{N}\sum_{j=1}^N \delta_{X_t^{N,j}}(x), \quad x \in \Dc, 
\end{align} 
where $\delta_X (x) = 1$ if $X = x$ and $\delta_X (x) = 0$ otherwise. Since $\mu_t^N$ is an empirical distribution, $N \mu_t^N[x] \in \Nb$ for all $x \in \Dc$.
By \eqref{eq:mu_N}, if $\mu_t^N [x] > 0$ for some $x \in \Dc$, then at least one of the $N$ agents is in state $x$ at time $t$; in this case, we call the pair $(x, \mu_t^N)$ \emph{feasible}. With these two conditions, we define the set of all feasible state-distribution pairs by 
\begin{align*}
	\Delta_N := \{(x,v) \in \Delta :  N v \in \Nb^d \text{ and } N v[x] > 0 \} .
\end{align*}
Note that $\Delta_N$ is finite and $\bigcup_{N=1}^{\infty} \Delta_N$ is dense in $\Delta$. Correspondingly, we define the space of admissible strategies for the $N$-agent game by
\begin{align}
	\label{eq:Pi_N}
	\Pi_N = \{ \phi : \phi \text{ is Borel measurable, mapping from  $(x, v) \in \Delta_N$ to $\Pc(U)$} \}.
\end{align}
Note that any $\phi \in \Pi$ naturally restricts to an element of $\Pi_N$.

To set up the $N$-agent game under model uncertainty, we follow the MFG model in Subsection \ref{sub:MFG} but replace the population distribution $\mu = \{\mu_t\}_{t \in \Tb}$ in the MFG by the empirical distribution $\mu^N =\{\mu_t^N\}_{t \in \Tb}$ in \eqref{eq:mu_N}. At every $t \in \Tb$, $\vec{X}^N_t$ and $\mu_t^N$ are known, and we define the corresponding uncertainty set by $\Pf(\mu_t^N)$, where the functional $\Pf$ is defined by \eqref{eq:Pf}. 
Similarly, for every transition kernel $\bP_t^N = \{p(x, u, \mu_t^N)\}_{(x, u) \in \Dc \times U}$ in $\Pf(\mu_t^N)$, it induces a probability measure $\Pb^{\bP_t^N}$; for all $\overline{\bP}^N = \{\bP_t^N\}_{t \in \Tb} \in \overline{\Pf}^N := \Pf(\mu_0^N) \otimes \Pf(\mu_1^N) \otimes \cdots$, we define $\Pb^{\overline{\bP}^N}$ as the product measure of all $\Pb^{\bP_t^N}$ such that each agent  transitions independently according to
\begin{align}
	\label{eq:X_N}
	\Pb^{\overline{\bP}^N} \left(X_{t+1}^{N, j} = y \big| X_t^{N,j} = x\right) = \int_U \, p_t(x, u, \mu_t^N)[y] \, \pi_t^{N,j} (\dd u)
\end{align}  
for all $j \in \Nc$ and $x, y \in \Dc$.

Similar to the MFG, we are interested in identifying stationary equilibria to the $N$-agent game. To that end, we fix Agent $i$ as the representative agent (for some $i \in \Nc$) who applies a control $\pi^{N, i}$ and assume that all other $N-1$ agents apply a common control $\pi^{N, -i}$. We then define Agent $i$'s robust utility function in the $N$-agent game by 
\begin{align}
	\label{eq:V_N}
	V^{N, \pi^{N, i}, \pi^{N, -i}} (x, v) &= \inf_{\overline{\bP}^{N} \in \overline{\Pf}^{N}} \; \Eb^{\overline{\bP}^{N}}_{x, v} \left[ \sum_{t=0}^{\infty} \rho^t \cdot r \left(X_t^{N,i},  \pi_t^{N, i},  \mu_t^N, X_{t+1}^{N,i} \right)\right], 
\end{align}
where $X^{N,i}$ is governed by \eqref{eq:X_N}.

\begin{remark} 
	The above $N$-agent game is similar to the MFG introduced in Subsection \ref{sub:MFG}, but there is a key difference between the two models, concerning the evolution of the population distributions. In the MFG, given a transition kernel $\bP_t = \{p_t(x, u, \mu_t)\}_{(x, u) \in \Dc \times U}$, the population distribution $\mu_t$, and the common control $\pi_t$ at time $t$, the distribution in the next period, $\mu_{t+1}$, is fully determined by \eqref{eq:mu}, and thus, once $\{\bP_t\}_{t \in \Tb}$ and $\pi = \{\pi_t\}_{t \in \Tb}$ are fixed, the population distribution evolves \emph{deterministically} according to \eqref{eq:mu}. However, the same is \emph{not} true in the $N$-agent game. Note that in a parallel setting, for fixed $\bP_t^N$ and $\pi_t^{N,j}$ at time $t$, although agent j's state $X^{N,j}_{t+1}$ is governed by \eqref{eq:X_N}, the empirical distribution $\mu_{t+1}^N$ remains \emph{random} at time $t$, due to the independent transitions of finitely many agents. This randomness vanishes in the 
	limit $N \to \infty$ by concentration of measure (Lemma~\ref{lem:distribution-convergence}), establishing the connection between the two formulations.
\end{remark}

Following the same approach as in Definition~\ref{def:ad_MFG}, 
we define a stationary equilibrium to the $N$-agent game as follows. 

\begin{definition}
	\label{def:n-agent-equilibria}
	Suppose that Agent $i$ follows an admissible strategy $\pi^{N, i} \in \Pi_N^{\Tb}$, while all other agents apply a common admissible, time-homogeneous strategy $\pi^{N, *} \in \Pi_N$ (or $\pi^{N,\varepsilon} \in \Pi_N$). $\pi^{N, *}$ is called a stationary equilibrium to the $N$-agent game if it satisfies the following full optimality condition: 
	\begin{align}
		\label{eq:pi_op_N}
		V^{N, \pi^{N, *}, \pi^{N, *}}(x, v) = \sup_{\pi^{N,i} \in \Pi_N^{\Tb}} \, V^{N, \pi^{N, i}, \pi^{N, *}}(x, v), \quad \text{for all } (x, v) \in \Delta_N,
	\end{align}
	where $V^{N, \pi^{N,i}, \pi^{N, *}}$ is defined by \eqref{eq:V_N}.	
	For $\varepsilon\geq0$, we call $\pi^{N,\varepsilon}$ a stationary $\varepsilon$-equilibrium if \eqref{eq:pi_op_N} is replaced by 
	\begin{align}
		\label{eq:pi_eps_N}
		V^{N, \pi^{N,\varepsilon}, \pi^{N,\varepsilon}}(x, v) \ge \sup_{\pi^{N,i} \in \Pi_N^{\Tb}} \, V^{N, \pi^{N,i}, \pi^{N,\varepsilon}}(x, v) - \varepsilon, \quad \text{for all } (x, v) \in \Delta_N.
	\end{align} 
\end{definition}

In Definition~\ref{def:n-agent-equilibria}, Agent $i$'s strategy $\pi^{N, i}$ may deviate from the common strategy $\pi^{N, *}$ (or $\pi^{N,\varepsilon}$) at \emph{all} time points $t \in \Tb$, and for that reason, we call \eqref{eq:pi_op_N} (or \eqref{eq:pi_eps_N}) the \emph{full} optimality condition. Using the same reasoning as in Proposition~\ref{prop:one_shot_eps}, we can characterize equilibria via the simpler \emph{one-shot} optimality condition, under which Agent $i$ adopts $\pi^{N,i} \otimes_1 \pi^{N,*}$ (or $\pi^{N,i} \otimes_1 \pi^{N,\varepsilon}$). We state the result below but omit the proof, as it is analogous to that of Proposition~\ref{prop:one_shot_eps}. 

\begin{proposition}
	\label{prop:n-agent-one-shot}
	Let $\pi^{N, *} \in \Pi_N$ be an admissible, time-homogeneous strategy. Then, the full optimality condition in \eqref{eq:pi_op_N} for a stationary equilibrium is equivalent to the one-shot optimality condition:
	\begin{align*}
		V^{N, \pi^{N, *}, \pi^{N, *}}(x, v) = \sup_{\pi^{N,i} \in \Pi_N} \, V^{N, \pi^{N,i} \otimes_1 \pi^{N,*}, \pi^{N, *}}(x, v), \quad \text{for all } (x, v) \in \Delta_N.
	\end{align*} 
	For approximate equilibria, the following results hold:
	\begin{enumerate}
		\item[(i)] If $\pi^{N, \varepsilon}$ is a stationary $\varepsilon$-equilibrium, then it satisfies the $\varepsilon$-one-shot optimality condition:
		\begin{align*}
			V^{N, \pi^{N, \varepsilon}, \pi^{N, \varepsilon}}(x, v) \ge \sup_{\pi^{N,i} \in \Pi_N} \, V^{N, \pi^{N,i} \otimes_1 \pi^{N,\varepsilon}, \pi^{N, \varepsilon}}(x, v) - \varepsilon, \quad \text{for all } (x, v) \in \Delta_N.
		\end{align*}
		\item[(ii)] Conversely, if $\pi^{N, \varepsilon}$ satisfies the $\varepsilon$-one-shot optimality condition above, then it is a stationary $\frac{2\varepsilon}{1-\rho}$-equilibrium.
	\end{enumerate}
\end{proposition}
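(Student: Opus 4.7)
The plan is to follow the two-step template developed in the proof of Proposition~\ref{prop:one_shot_eps}, with the MFG uncertainty set $\Pf(v)$ applied to empirical distributions and with the deterministic distribution flow \eqref{eq:mu} replaced by the stochastic empirical flow governed by \eqref{eq:X_N} and \eqref{eq:mu_N}. As a preliminary step, I would first establish an $N$-agent dynamic programming principle,
\[
V^{N, \pi^{N, i}, \pi^{N, -i}}(x, v) = \inf_{\bP_0 \in \Pf(v)} \Eb_{x, v}^{\bP_0} \Big[ r\big(x, \pi_0^{N, i}, v, X_1^{N, i}\big) + \rho \, V^{N, \pi^{N, i}_{1:\infty}, \pi^{N, -i}_{1:\infty}}\big(X_1^{N, i}, \mu_1^N\big) \Big],
\]
characterized uniquely as the bounded solution, by the same argument as in Proposition~\ref{prop:V_same}: existence follows from splitting the series in \eqref{eq:V_N}, and uniqueness from the $\rho$-contraction of the resulting Bellman operator.

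With the DPP in hand, the direction ``full optimality implies one-shot optimality'' in the exact case, and part (i) of the $\varepsilon$ case, are immediate: any one-shot perturbation $\pi^{N, i} \otimes_1 \pi^{N, *}$ (or $\pi^{N, i} \otimes_1 \pi^{N, \varepsilon}$) lies in $\Pi_N^{\Tb}$, so the one-shot supremum is bounded above by the supremum over all $\pi^{N, i} \in \Pi_N^{\Tb}$.

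For the converse--part (ii)--I would run an induction on the number of periods $k$ over which the representative agent deviates from $\pi^{N, \varepsilon}$. Denoting by $\pi^{N, i}_{0:k-1} \otimes_k \pi^{N, \varepsilon}$ the $k$-period perturbation defined analogously to \eqref{eq:pi_pert}, the base case $k = 1$ is exactly the $\varepsilon$-one-shot hypothesis. In the inductive step, applying the $N$-agent DPP to the first period and then invoking the induction hypothesis pointwise at the continuation pair $(X_1^{N, i}, \mu_1^N)$ yields
\[
V^{N, \pi^{N, i}_{0:k} \otimes_{k+1} \pi^{N, \varepsilon}, \pi^{N, \varepsilon}}(x, v) \leq V^{N, \pi^{N, \varepsilon}, \pi^{N, \varepsilon}}(x, v) + \varepsilon \sum_{j=0}^{k} \rho^j.
\]
To pass from finite-horizon deviations to arbitrary $\pi^{N, i} \in \Pi_N^{\Tb}$, I would use the uniform bound $|r| \le C_r$ from Assumption~\ref{ass:r} to estimate the tail mismatch by $\frac{2 C_r \rho^k}{1 - \rho}$, choose $k$ large enough that this quantity is below $\varepsilon$, and combine the two estimates to obtain the desired $\frac{2\varepsilon}{1-\rho}$-equilibrium bound. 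The exact-equilibrium equivalence in the first claim of the proposition is then recovered by setting $\varepsilon = 0$.

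The only genuinely new ingredient relative to Proposition~\ref{prop:one_shot_eps} is structural rather than quantitative: unlike in the MFG setting, where $\mu_t$ evolves deterministically once the common strategy and the uncertainty choice are fixed, here $\mu_{t+1}^N$ is genuinely random given the period-$t$ transition kernel and the profile $(\vec{X}_t^N, \mu_t^N)$, because the $N-1$ other agents transition independently. The main obstacle is thus to verify that the period-$0$ infimum in the DPP can still be taken over the static set $\Pf(v)$ (rather than a larger, stochastic one) and that the induction hypothesis applies pointwise in the random realization of $(X_1^{N, i}, \mu_1^N)$ before the conditional expectation $\Eb^{\bP_0}_{x, v}$ is taken. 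Since $\Pf(v)$ in \eqref{eq:Pf} depends only on the current empirical distribution and the expectation absorbs the randomness of $\mu_1^N$, this obstacle resolves cleanly and the remainder of the argument mirrors the MFG proof verbatim.
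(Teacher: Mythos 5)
Your proposal is correct and follows essentially the same route the paper intends: the paper omits the proof precisely because it is the argument of Proposition~\ref{prop:one_shot_eps} transplanted to the $N$-agent setting (one-shot deviations are special cases for the easy direction; induction on $k$-period perturbations via the $N$-agent DPP plus the tail bound $\frac{2C_r\rho^k}{1-\rho}\le\varepsilon$ for the converse). You also correctly identify and resolve the one genuine structural difference---the randomness of $\mu_{t+1}^N$---by noting that $\Pf(v)$ depends only on the current empirical distribution and that the conditional expectation absorbs the randomness before the induction hypothesis is applied pointwise.
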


\subsection{Main Results}
\label{sub:main}

The previous subsections introduced the MFG and the corresponding $N$-agent game, both under model uncertainty, and studied them separately. In this subsection, we study the convergence between the two games when the number of agents $N$ tends to infinity, and establish the existence of stationary equilibria to the $N$-agent game.

Recall from Section \ref{sec:inf_MFG} that Markov (feedback) controls are functionals from the space $\Pi$ in \eqref{eq:Pi}, and every $\phi \in \Pi$ is a Borel-measurable mapping from the state-distribution space $\Delta$ to $\Pc(U)$, which is the space of distributions on the action space $U$. In the subsequent analysis, we focus on strategies that are continuous in the distribution argument:
\begin{align}
	\label{eq:Pi_c}
	\Pi_c := \{ \phi \in \Pi : v \mapsto \phi(x, v) \text{ is continuous for every } x \in \Dc \}.
\end{align}
The continuity requirement in $\Pi_c$ is natural in our setting: since agents observe the population distribution and respond optimally, continuous dependence on the distribution reflects the intuition that small changes in the population should not induce drastic strategy changes. 

The results below concern the convergence between the MFG and the corresponding $N$-agent game. In the first theorem, we show that an MFG equilibrium is an $\varepsilon$-equilibrium to the $N$-agent game for sufficiently large $N$.

\begin{theorem}
	\label{thm:mf-limit}
	Let Assumptions~\ref{ass:r} and~\ref{ass:uncertainty} hold, and suppose that $\pi^* \in \Pi_c$ is a stationary equilibrium to the MFG as in Definition~\ref{def:eq}. Then, for every $\varepsilon > 0$, there exists an $N_0$ such that for all $N \ge N_0$, $\pi^*$ is a stationary $\varepsilon$-equilibrium to the corresponding $N$-agent game as in Definition~\ref{def:n-agent-equilibria}. 
\end{theorem}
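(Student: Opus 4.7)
The plan is to reduce the theorem to the one-shot characterization and then invoke a propagation-of-chaos/concentration argument to bridge the $N$-agent and MFG value functions. By Proposition~\ref{prop:n-agent-one-shot}(ii), to show $\pi^*$ is an $\varepsilon$-equilibrium for the $N$-agent game, it suffices to verify the $\varepsilon'$-one-shot optimality condition with $\varepsilon' = (1-\rho)\varepsilon/2$, namely
\begin{align*}
V^{N,\pi^*,\pi^*}(x,v) \;\ge\; \sup_{\pi^{N,i}\in\Pi_N} V^{N,\pi^{N,i}\otimes_1 \pi^*,\pi^*}(x,v) - \varepsilon', \quad (x,v) \in \Delta_N.
\end{align*}
Meanwhile, from the MFG equilibrium property and Proposition~\ref{prop:one_shot_eps}(i), I have $V^{\pi^i\otimes_1 \pi^*,\pi^*}(x,v) \le V^{\pi^*,\pi^*}(x,v)$ for every $\pi^i\in\Pi$ and $(x,v)\in\Delta$. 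So the task reduces to showing, for all $N$ sufficiently large,
\begin{align*}
\big|V^{N,\pi^{N,i}\otimes_1 \pi^*,\pi^*}(x,v) - V^{\pi^{N,i}\otimes_1 \pi^*,\pi^*}(x,v)\big| + \big|V^{N,\pi^*,\pi^*}(x,v) - V^{\pi^*,\pi^*}(x,v)\big| \le \varepsilon',
\end{align*}
uniformly over $(x,v) \in \Delta_N$ and $\pi^{N,i}\in\Pi_N$ (where $\pi^{N,i}$ is extended Borel-measurably to $\Delta$ so that it lies in $\Pi$; only its value at the initial $(x,v)\in\Delta_N$ matters).

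The first step is a horizon truncation: since $|r|\le C_r$, choose $T$ with $2C_r \rho^T/(1-\rho) < \varepsilon'/4$, so both $V^N$ and $V$ are $\varepsilon'/4$-approximated by their $T$-step counterparts. The second step is the heart of the proof: a coupled estimate showing that, under any adversarial choice of transition kernels compatible with the $N$-agent game, the empirical distribution $\mu^N_t$ stays within $d_{W_1}$-distance $\delta_N \to 0$ of the MFG flow $\mu_t$ produced by the same kernels, uniformly over $t\le T$ and over any single-agent deviation $\pi^{N,i}$. Concretely, because exactly one agent deviates at time $0$, its contribution to $\mu^N_t$ is $O(1/N)$; the remaining $N-1$ agents all use $\pi^*$. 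The invoked Lemma~\ref{lem:distribution-convergence} (hinted at in the remark) then provides the $O(N^{-1/2})$ concentration of the empirical distribution to its deterministic mean, after which the continuity of $\Pf$ (Assumption~\ref{ass:uncertainty}) lets us couple an adversarial choice in the $N$-agent game with a nearby adversarial choice in the MFG (and vice versa), with the induced $d_\infty$-error controlled by $\delta_N$.

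The third step propagates these errors through the reward. Using the Lipschitz continuity of $r$ in $(u,v)$ (Assumption~\ref{ass:r}), the continuity of $\pi^*$ in $v$ (since $\pi^*\in\Pi_c$, so in particular $\pi^*(x,\cdot)$ is uniformly continuous on the compact set $\Pc(\Dc)$), and the Lipschitz property~\eqref{eq:F_Lip} of the transition kernels, a straightforward induction over $t = 0,1,\ldots,T$ shows the pathwise differences in the cumulative reward to be $O(\delta_N)$, with constants depending only on $T$, $L$, $L_r$, and the modulus of continuity of $\pi^*$. Taking sup/inf then yields $|V^N - V| \le \varepsilon'/4$ for $N$ large, and combining with truncation and the MFG optimality gives the required $\varepsilon'$-one-shot optimality.

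The main obstacle, I expect, is obtaining the concentration estimate \emph{uniformly} over the adversary's time-dependent, $\mu^N_t$-measurable choice of transition kernels and simultaneously over the single-agent deviation $\pi^{N,i}$. The continuity of $\pi^*$ in $v$ (hence the restriction to $\Pi_c$) is essential here, because without it the map $v\mapsto \pi^*(x,v)$ could amplify the $O(N^{-1/2})$ fluctuation of $\mu^N_t$ into non-vanishing drift errors; Lipschitzness of $\Pf$ plays the symmetric role on the adversary side. Once these two continuity properties are combined with the compactness of $U$ and $\Pc(\Dc)$, the uniform control follows from a Grönwall-type iteration in $t$.
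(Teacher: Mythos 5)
Your proposal is correct in its overall reduction and matches the paper's skeleton exactly: reduce to the one-shot condition via Proposition~\ref{prop:n-agent-one-shot}, invoke the MFG optimality $V^{\pi^i\otimes_1\pi^*,\pi^*}\le V^{\pi^*,\pi^*}$ from Proposition~\ref{prop:one_shot_eps}, and close the gap by showing the $N$-agent and MFG values are uniformly close. Where you genuinely diverge is in \emph{how} that closeness is established. You propose a finite-horizon truncation followed by a pathwise, multi-step propagation-of-chaos estimate (coupling $\mu_t^N$ to the MFG flow $\mu_t$ for $t\le T$ and coupling the adversary's kernel in $\Pf(\mu_t^N)$ to a nearby one in $\Pf(\mu_t)$ via Hausdorff continuity). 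The paper instead never truncates and never compares infima over different uncertainty sets: it writes both value functions via the one-step Bellman expansion, so the only infimum to compare is over the \emph{common} set $\Pf(v)$, pushes all future discrepancy into the auxiliary functions $W^{N,\bP,\pi^*}$ versus $W^{\bP,\pi^*}$, and closes a coupled pair of inequalities $E_N^V\le C\delta_N+\rho E_N^W$ and $E_N^W\le E_N^V+\omega(\eta)+o(1)$ by the contraction factor $\rho$ (Lemma~\ref{lem:value-auxiliary-uniform}), using only the \emph{one-step} concentration of Lemma~\ref{lem:distribution-convergence} and the uniform continuity of the MFG value function (Lemma~\ref{lem:value-continuity}). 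Your route would work and yields explicit horizon-dependent rates, but it carries two extra burdens the paper avoids: (a) the adversary coupling across the distinct sets $\Pf(\mu_t^N)$ and $\Pf(\mu_t)$ at every $t\ge1$ must be made quantitative from mere Hausdorff continuity of $\Pf$, which is the hardest and least developed step in your sketch; and (b) your ``Gr\"onwall-type iteration'' is slightly overstated, since $\pi^*\in\Pi_c$ is only continuous, not Lipschitz, in $v$, so the time-$t$ error propagates through iterated moduli of continuity rather than a linear recursion---still vanishing as $N\to\infty$ for fixed $T$, but not $O(N^{-1/2})$ beyond the first step. Neither point is fatal, but both require more care than the paper's contraction argument.
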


The next theorem establishes the converse of Theorem~\ref{thm:mf-limit}, in the sense that any limit of $N$-agent equilibria
$\{\pi^{N,*}\}_{N\geq 2}$ is an MFG equilibrium. 

\begin{theorem}
	\label{thm:convergence-mf}
	Let Assumptions~\ref{ass:r} and~\ref{ass:uncertainty} hold. Suppose that for every $N \geq 2$, $\pi^{N,*} \in \Pi_N$ is a stationary equilibrium  for the $N$-agent game as in Definition~\ref{def:n-agent-equilibria}. If $\pi^{\infty} \in \Pi_c$ satisfies 
	\begin{align*}
		\lim_{N \to \infty} \sup_{(x, v) \in \Delta_N} d_{W_1}(\pi^{N, *}(x, v), \pi^\infty(x, v)) = 0,
	\end{align*}
	then $\pi^{\infty}$ is a stationary equilibrium to the MFG as in Definition~\ref{def:eq}. 
\end{theorem}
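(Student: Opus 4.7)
The plan is to verify the one-shot optimality condition \eqref{eq:pi_op_one} for $\pi^\infty$, which by Proposition~\ref{prop:one_shot_eps} is equivalent to $\pi^\infty$ being a stationary MFG equilibrium. Concretely, I must establish $V^{\pi^\infty, \pi^\infty}(x,v) \ge V^{\phi \otimes_1 \pi^\infty, \pi^\infty}(x,v)$ for every $(x,v) \in \Delta$ and every $\phi \in \Pi$. The strategy is to inherit this inequality from the one-shot optimality of each $\pi^{N,*}$ (Proposition~\ref{prop:n-agent-one-shot}) by choosing a sequence $(x_N, v_N) \in \Delta_N$ approaching $(x,v)$---such a sequence exists because $\bigcup_N \Delta_N$ is dense in $\Delta$---and then passing to the limit in $N$.

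The key technical ingredient is a value-function convergence lemma: if $(\pi^{N,i}, \pi^{N})$ are admissible $N$-agent strategies converging uniformly on $\Delta_N$ to continuous limits $(\pi^i, \pi) \in \Pi_c \times \Pi_c$, and $(x_N, v_N) \in \Delta_N \to (x,v)$, then
\[
V^{N, \pi^{N,i}, \pi^{N}}(x_N, v_N) \;\longrightarrow\; V^{\pi^i, \pi}(x, v) \quad \text{as } N \to \infty.
\]
I would prove this by working at the level of the Bellman operators associated with the DPP \eqref{eq:V} and its $N$-agent analogue: the two differ only in that the one-step distribution is empirical ($\mu_1^N$, random) in the $N$-agent model, versus deterministic-given-$\bP$ ($\mu_1$) in the MFG. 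Concentration of empirical measures (Lemma~\ref{lem:distribution-convergence}) controls $\mu_1^N - \mu_1$ uniformly across the compact set $\Pf(v)$; together with Lipschitz continuity of $r$ (Assumption~\ref{ass:r}), Hausdorff continuity of $\Pf$ (Assumption~\ref{ass:uncertainty}), and continuity of the strategies, this yields uniform convergence of the one-step operators. Since both Bellman operators are $\rho$-contractions in the bounded supremum norm, uniform convergence of operators propagates to uniform convergence of their fixed points.

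Granted this lemma, the limit argument runs as follows. Fix $(x,v) \in \Delta$ and $\phi \in \Pi_c$, pick $(x_N, v_N) \in \Delta_N \to (x,v)$, and restrict $\phi$ to $\Delta_N$. The $N$-agent equilibrium property of $\pi^{N,*}$ (in its one-shot form, Proposition~\ref{prop:n-agent-one-shot}) gives $V^{N, \pi^{N,*}, \pi^{N,*}}(x_N, v_N) \ge V^{N, \phi \otimes_1 \pi^{N,*}, \pi^{N,*}}(x_N, v_N)$. Applying the convergence lemma to each side---using $\pi^{N,*} \to \pi^\infty$ on $\Delta_N$ for the left, and the same together with the trivial restriction $\phi|_{\Delta_N} \to \phi$ for the right---yields $V^{\pi^\infty, \pi^\infty}(x,v) \ge V^{\phi \otimes_1 \pi^\infty, \pi^\infty}(x,v)$ for all $\phi \in \Pi_c$. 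To extend from $\Pi_c$ to $\Pi$, I use that a one-shot perturbation only probes $\phi(x,v) \in \Pc(U)$ at the single pair $(x,v)$, so for any target $u \in \Pc(U)$ the constant map $\phi \equiv u$ lies in $\Pi_c$ and achieves the same value; hence the supremum over $\Pi$ coincides with the one over $\Pi_c$. The hard part, as anticipated, will be the convergence lemma, specifically controlling the adversarial infimum uniformly in $N$: ruling out that the $N$-agent adversary exploits the randomness of $\mu_t^N$ in ways that fail to stabilize can be handled by applying a Berge-type maximum theorem to the compact-valued continuous correspondence $\Pf$ and truncating the infinite horizon using the uniform reward bound $C_r$ together with the geometric tail $\rho^t$.
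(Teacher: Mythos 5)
Your proposal is correct and follows essentially the same route as the paper: both verify the one-shot optimality condition for $\pi^\infty$ by evaluating the $N$-agent one-shot inequality along approximating pairs $(x_N,v_N)\in\Delta_N\to(x,v)$ and passing to the limit, with the limit justified by concentration of the empirical distribution, continuity of the correspondence $\Pf$ (handled in the paper via a parametric-infima lemma, in your plan via a Berge-type argument), and a $\rho$-contraction argument propagating operator convergence to value-function convergence. Your explicit observation that a one-shot deviation only probes $\phi(x,v)$ at a single pair---so constant maps suffice and the supremum over $\Pi$ equals that over $\Pi_c$---is a clean way to make precise a step the paper handles implicitly.
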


Together, Theorems~\ref{thm:mf-limit} and~\ref{thm:convergence-mf} establish a two-way justification of the mean-field framework: it emerges as the limit of finite-population equilibria as $N \to \infty$, and in turn provides asymptotically exact solutions for large but finite games. 

Note that Theorem~\ref{thm:convergence-mf} assumes the existence of a limit point $\pi^\infty \in \Pi_c$ for the sequence $\{\pi^{N,*}\}$. The following corollary removes this assumption by imposing uniform equicontinuity on $\{\pi^{N,*}\}_{N \geq 2}$, which, via the Arzel\`a-Ascoli theorem, guarantees the existence of a convergent subsequence.

\begin{corollary}
	\label{cor:subsequence-convergence}
Let Assumptions~\ref{ass:r} and~\ref{ass:uncertainty} hold. Suppose that for $N \in \Nb $, $\pi^{N,*} \in \Pi_N$ is a stationary equilibrium  for the $N$-agent game as in Definition~\ref{def:n-agent-equilibria}. If, for every $x \in \Dc$, the family $\{\pi^{N,*}(x, \cdot)\}_{N \geq 2}$ is uniformly equicontinuous as mappings from $(\Pc(\Dc), d_{W_1})$ to $(\mathcal{P}(U), d_{W_1})$, then there exists a subsequence $\{\pi^{N_k, *}\}_{k \in \mathbb{N}}$ that converges to some $\pi^\infty \in \Pi_c$, and this $\pi^\infty$ is a stationary equilibrium to the MFG as in Definition~\ref{def:eq}. 
\end{corollary}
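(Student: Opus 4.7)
The plan is to extract a uniformly convergent subsequence of $\{\pi^{N,*}\}$ via the Arzel\`a-Ascoli theorem and then invoke Theorem~\ref{thm:convergence-mf} to conclude. Since $\Dc$ is finite, it suffices to carry out the extraction coordinate by coordinate in $x \in \Dc$ and then diagonalize over the $d$ values.

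The main preliminary is that each $\pi^{N,*}(x,\cdot)$ is defined only on the finite grid $D_N^x := \{v \in \Pc(\Dc) : Nv \in \Nb^d \text{ and } Nv[x] > 0\}$, rather than on all of $\Pc(\Dc)$. First I would extend each $\pi^{N,*}(x,\cdot)$ to a continuous map $\tilde{\pi}^{N,*}(x,\cdot) : \Pc(\Dc) \to \Pc(U)$ in a way that preserves the common modulus of continuity of the family $\{\pi^{N,*}(x,\cdot)\}_{N \geq 2}$. A convenient construction is barycentric interpolation over a simplicial triangulation of the $(d-1)$-dimensional simplex $\Pc(\Dc)$ whose vertices lie in $D_N^x$; since $\Pc(U)$ is a convex subset of the Banach space of signed measures on $U$ (with Kantorovich-Rubinstein norm equal to $d_{W_1}$ on $\Pc(U)$), convex combinations of values in $\Pc(U)$ remain in $\Pc(U)$ and the modulus is preserved up to an absolute constant. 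After this step, $\{\tilde{\pi}^{N,*}(x,\cdot)\}_{N \geq 2}$ is a uniformly equicontinuous family of maps from the compact simplex $\Pc(\Dc)$ into the compact metric space $(\Pc(U), d_{W_1})$ (compactness of $\Pc(U)$ follows from compactness of $U$).

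Second, Arzel\`a-Ascoli yields a subsequence converging uniformly to some continuous limit $\pi^\infty(x,\cdot)$. A finite diagonal extraction over the $d$ elements of $\Dc$ produces a common subsequence $\{N_k\}_{k \in \Nb}$ and a single limit $\pi^\infty \in \Pi_c$ with
\begin{equation*}
\sup_{(x,v) \in \Dc \times \Pc(\Dc)} d_{W_1}\bigl(\tilde{\pi}^{N_k,*}(x,v),\, \pi^\infty(x,v)\bigr) \xrightarrow{k \to \infty} 0.
\end{equation*}
Restricting to $\Delta_{N_k}$, on which $\tilde{\pi}^{N_k,*}$ coincides with $\pi^{N_k,*}$, this delivers the hypothesis
\begin{equation*}
\lim_{k \to \infty}\sup_{(x,v) \in \Delta_{N_k}} d_{W_1}\bigl(\pi^{N_k,*}(x,v),\, \pi^\infty(x,v)\bigr) = 0
\end{equation*}
of Theorem~\ref{thm:convergence-mf} along the subsequence (the proof of that theorem uses only that $N \to \infty$ along the sequence considered, and so is insensitive to whether the indexing runs over all $N \geq 2$ or a subsequence). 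Since each $\pi^{N_k,*}$ is a stationary $N_k$-agent equilibrium by assumption, Theorem~\ref{thm:convergence-mf} then implies that $\pi^\infty$ is a stationary MFG equilibrium, as claimed.

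The main obstacle is the extension step. Because the values lie in $\Pc(U)$, which is only a convex subset of a vector space, care is needed so that extended values remain probability measures and the uniform modulus is preserved. An additional subtlety arises near the boundary where $v[x]$ is small, since some neighboring grid points may lie outside $D_N^x$; here the uniform modulus of continuity must be used to propagate values across this boundary in a manner that preserves equicontinuity. Once such a clean extension is in hand, the rest of the argument is a routine compactness-and-limit application of the previously established convergence theorem.
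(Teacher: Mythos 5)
Your proposal is correct and follows essentially the same route as the paper: compactness of $(\Pc(U), d_{W_1})$ plus the assumed equicontinuity gives a uniformly convergent subsequence via Arzel\`a--Ascoli, and Theorem~\ref{thm:convergence-mf} (which is insensitive to passing to a subsequence) then yields the MFG equilibrium property of the limit. Your extra care in extending each $\pi^{N,*}(x,\cdot)$ from the finite grid $\Delta_N$ to all of $\Pc(\Dc)$ before invoking Arzel\`a--Ascoli addresses a domain-mismatch detail that the paper's one-line proof passes over silently, and your barycentric-interpolation construction for doing so is sound.
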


\begin{proof}
    Since $U$ is compact, the space $(\mathcal{P}(U), d_{W_1})$ is compact, so the family $\{\pi^{N,*}(x, \cdot)\}_{N \geq 2}$ is uniformly bounded. Combined with the assumed equicontinuity, the Arzel\`a-Ascoli theorem yields a uniformly convergent subsequence with limit $\pi^\infty \in \Pi_c$. The result then follows from Theorem~\ref{thm:convergence-mf}.
\end{proof}

So far, all of our results rely on the assumption that either MFG or $N$-agent game equilibria exist. This motivates us to establish an existence result for the $N$-agent game.

\begin{theorem}
	\label{thm:n-agent-existence}
	Let Assumptions~\ref{ass:r} and~\ref{ass:uncertainty} hold. For every finite $N \geq 2$, there exists a stationary equilibrium $\pi^{N,*} \in \Pi_N$ to the $N$-agent game.
\end{theorem}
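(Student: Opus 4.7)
The plan is to apply Kakutani's fixed-point theorem to a symmetric best-response correspondence on the strategy space $\Pi_N$. Since $\Delta_N$ is finite and $U \subset \Rb^{\ell}$ is compact, $\Pi_N$ can be identified with the product space $\Pc(U)^{|\Delta_N|}$, which is a nonempty, compact, and convex subset of a locally convex topological vector space when each factor $\Pc(U)$ is equipped with the weak topology (equivalently, the $d_{W_1}$-metric, since $U$ is compact). By Proposition~\ref{prop:n-agent-one-shot}, a stationary equilibrium can be obtained as any fixed point $\pi^{N,*}$ of the best-response correspondence
\[
\Phi(\pi) := \prod_{(x,v) \in \Delta_N} \arg\max_{\mu \in \Pc(U)} F(\mu; \pi, x, v),
\]
where, for any $\pi^i \in \Pi_N$ with $\pi^i(x,v) = \mu$, I set $F(\mu; \pi, x, v) := V^{N, \pi^i \otimes_1 \pi, \pi}(x,v)$; only the value $\mu = \pi^i(x, v)$ affects the one-shot deviation, so $F$ is well-defined.

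To verify Kakutani's hypotheses, I would first derive a dynamic-programming representation analogous to \eqref{eq:V} in the $N$-agent setting,
\[
F(\mu; \pi, x, v) = \inf_{\bP \in \Pf(v)} \int_U \Eb^{\bP, u}_{x, v}\!\left[ r(x, u, v, X_1^{N,i}) + \rho \cdot V^{N, \pi, \pi}(X_1^{N,i}, \mu_1^N) \right] \mu(\dd u),
\]
where under $\Eb^{\bP, u}_{x, v}$, Agent $i$'s next state is drawn from $p(x, u, v)$ while the other $N-1$ agents transition according to $\pi$ and $\bP$. This expression is linear in $\mu \in \Pc(U)$, hence $F(\cdot; \pi, x, v)$ is concave (as an infimum of linear functions) and continuous on the compact set $\Pc(U)$ (using boundedness of $r$ and of $V^{N,\pi,\pi}$, together with compactness of $\Pf(v)$ from Assumption~\ref{ass:uncertainty}). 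Consequently $\arg\max_\mu F(\mu; \pi, x, v)$ is nonempty, compact, and convex, so $\Phi(\pi)$ is nonempty and convex-valued for every $\pi \in \Pi_N$.

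The core of the argument is the closed-graph condition for $\Phi$, which reduces to joint continuity of $(\mu, \pi) \mapsto F(\mu; \pi, x, v)$, and in turn to continuity of $\pi \mapsto V^{N, \pi, \pi}$ on the finite set $\Delta_N$. Since $V^{N, \pi, \pi}$ is the unique bounded fixed point of the $N$-agent Bellman operator $\mathcal{T}^{\pi}$, which is a $\rho$-contraction on $\ell^{\infty}(\Delta_N)$, continuity in $\pi$ will follow from a stability estimate $\|\mathcal{T}^{\pi_n} W - \mathcal{T}^{\pi} W\|_{\infty} \to 0$ uniformly on bounded sets of $W$ whenever $\pi_n \to \pi$. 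To obtain this, I would combine Lipschitz continuity of $r$ in the action argument (Assumption~\ref{ass:r}), the continuity of the uncertainty mapping $\Pf$ in the Hausdorff metric induced by $d_{\infty}$ (Assumption~\ref{ass:uncertainty}), and the standard fact that $d_{W_1}(\pi_n(x,v), \pi(x,v)) \to 0$ implies convergence of integrals of continuous bounded functions on $U$ against these relaxed controls. With continuity of the value function in hand, the closed graph of $\Phi$ follows routinely: if $\pi_n \to \pi$ and $\pi^i_n \to \pi^i$ with $\pi^i_n \in \Phi(\pi_n)$, passing to the limit in $F(\pi^i_n(x,v); \pi_n, x, v) \geq F(\mu; \pi_n, x, v)$ for all $\mu \in \Pc(U)$ yields $\pi^i \in \Phi(\pi)$.

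With all Kakutani hypotheses verified, there exists $\pi^{N,*} \in \Pi_N$ with $\pi^{N,*} \in \Phi(\pi^{N,*})$; by Proposition~\ref{prop:n-agent-one-shot}, this is a stationary equilibrium to the $N$-agent game. The main obstacle is the continuity of $\pi \mapsto V^{N,\pi,\pi}$: although the discount factor $\rho$ yields Banach-type contraction, the dependence on $\pi$ propagates through both (i) the selection of the worst-case kernel in the inner infimum, which requires Hausdorff-continuity of $\Pf(v)$, and (ii) the transitions of the other $N-1$ agents that determine the random next-period empirical distribution $\mu_1^N$ at which the value function is evaluated. Tracking both channels carefully under Assumptions~\ref{ass:r} and~\ref{ass:uncertainty} is the key technical step.
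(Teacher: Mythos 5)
Your proposal is correct and follows essentially the same route as the paper: identify $\Pi_N$ with the compact convex set $\Pc(U)^{|\Delta_N|}$, apply Kakutani to the one-shot best-response correspondence (justified by Proposition~\ref{prop:n-agent-one-shot}), obtain convex-valuedness from linearity of the one-shot value in the deviation measure, and reduce the closed-graph condition to continuity of $\pi \mapsto V^{N,\pi,\pi}$ via a contraction-stability estimate on the Bellman operator (the paper's Lemma~\ref{lem:n-agent-value-continuity}, which handles the $N-1$ other agents' transitions by an explicit coupling). The only cosmetic difference is that Hausdorff-continuity of $\Pf(\cdot)$ is not actually needed here, since the inner infimum is over the fixed set $\Pf(v)$ and the elementary bound $|\inf_\bP f - \inf_\bP g| \le \sup_\bP |f-g|$ suffices.
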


Unlike the finite‑player game, the general existence of mean‑field equilibria remains an open question within our framework. The main obstacles are twofold. First, the natural strategy space $\Pi_c$ defined in \eqref{eq:Pi_c} lacks compactness under any reasonable topology. Second, even if compactness could be restored by restricting to a suitable subset, the best‑response correspondence may still not map back into $\Pi_c$: while the pointwise optimization problem $\arg\max_{\sigma \in \mathcal{P}(U)} V^{\sigma \otimes_1 \pi, \pi}(x,v)$ admits a solution for each $(x,v)$, Berge’s maximum theorem only guarantees upper hemicontinuity of the arg‑max correspondence—this does not ensure the existence of a continuous selection $(x,v) \mapsto \pi^*(x,v)$. In contrast, the $N$‑agent game avoids both issues: its state space $\Delta_N$ is finite and discrete, so pointwise maximization directly yields an admissible strategy in $\Pi_N$ without any continuity requirement.

Nevertheless, Theorem~\ref{thm:convergence-mf} offers an indirect route to existence: any limit point of $N$-agent equilibria (whose existence is guaranteed by Theorem~\ref{thm:n-agent-existence}) must be a mean‑field equilibrium. Indeed, in the next section we construct concrete examples where MFG equilibria according to Definition~\ref{def:eq} do exist.

\section{Examples}
\label{sec:exm}

\subsection{A Solvable MFG}
\label{sub:sol}
For the general Markov MFG under model uncertainty in Subsection \ref{sub:MFG}, an existence result for stationary equilibria is not available. To demonstrate that the proposed MFG model is reasonable, we construct a specific MFG within our framework and obtain its closed-form equilibrium. 

\begin{example}
	\label{exm:MFG}
	We set up the MFG model in Subsection \ref{sub:MFG} with the following specifications. It is straightforward to verify that Assumption~\ref{ass:r} is satisfied; Assumption~\ref{ass:uncertainty} follows from the ball structure of $\Ff$ via Proposition~\ref{prop:mapping}.
	\begin{itemize}
		\item the state space is $\Dc = \{1, 2\}$ (that is, the Markov chain has two states), and the space of the population distribution is $\Pc(\Dc) = \{ v \in \Rb_+^2 : v[1] + v[2] = 1 \}$; 
		\item the action space is $U = [0,1]$, and relaxed controls are distributions on $U$, taking values in the set $\Pc(U)$; 
		\item the reward function $r$ is state-independent, given by 
		\begin{align*}
			r(x, u, v, y) \equiv r(u, v)= v[2] \cdot u - \frac{u^2}{2}, \quad v \in \Pc(\Dc), \, u \in U,
		\end{align*}
		implying that agents prefer a higher concentration in state 2 (larger $v[2]$) and that applying a control incurs a quadratic cost.
	\end{itemize}

Before we describe the uncertainty set in detail, recall that $\Ff$ maps $(x, u, v) \in \Dc \times U \times \Pc(\Dc)$ into a set of possible transition probabilities $p(x, u, v)$ for an agent who is in state $x \in \Dc$ and whose control value is $u \in U$, when the population distribution is $v \in \Pc(\Dc)$. We assume that $\Ff$ admits a ``ball structure'' with a state-dependent ``radius'' (see Appendix~\ref{app:ass} for technical details). The ``center'' is given by 
\begin{align*}
	\mathtt{c}(x=1, u, v) = (1-u, u) \in \Pc(\Dc) \quad \text{and} \quad \mathtt{c}(x=2, u, v) = (0, 1) \in \Pc(\Dc), 
\end{align*}
which serves as the ``baseline'' transition probability given the triplet $(x, u, v)$; and  the ``radius'' is given by 
\begin{align*}
	\mathtt{R}(x=1, u, v) = \epsilon > 0 \quad \text{and} \quad \mathtt{R}(x=2, u, v) = 0, 
\end{align*}
which bounds the distance from any $p(x, u, v)\in \Ff(x, u, v)$ to the center $\mathtt{c}(x, u, v)$.
Finally, we consider the following uncertainty set $\Ff$: 
\begin{align*}
	\Ff(x, u, v) = \{ p(x, u, v) \in \Pc(\Dc) : |p(1, u, v)[2] - u| \le \epsilon \text{ and } p(2, u, v)[2] =  1 \},
\end{align*}
where $p(x, u, v)[y]$ denotes the transition probability from state $x$ to state $y$, where $x, y \in \{1, 2\}$. The above $\Ff$, along with the given $\mathtt{c}$ and $\mathtt{R}$, implies that the transition probability from state 1 to state 2 is within $\epsilon$ of the control value $u \in [0,1]$, and that state 2 is an absorbing state with complete certainty, regardless of the control $u$ and distribution $v$.

Under the parameter constraint
    \begin{align}
        \label{eq:param_constraint}
        \rho (2 + \epsilon) = 1,
    \end{align}
    we can derive a closed-form equilibrium, as stated below.
\end{example}

\begin{proposition}
    \label{prop:example_solution}
    Under the setup of Example~\ref{exm:MFG} with the parameter constraint \eqref{eq:param_constraint}, the MFG admits a unique stationary equilibrium characterized as follows:
    \begin{enumerate}
        \item[(i)] The equilibrium strategy is state-independent: $\pi^*(x, v) \equiv \pi^*(v) = \delta_{v[2]}$; i.e., each agent applies control $u^* = v[2]$.
        \item[(ii)] The worst-case transition kernel is given by $\hat{p}(1, u, v)[2] = \max\{0, u - \epsilon\}$ and $\hat{p}(2, u, v)[2] = 1$.
        \item[(iii)] The equilibrium value function is also state-independent:
        \begin{align*}
            V^*(x, v) \equiv V^*(v) = \begin{cases}
                \frac{1}{2(1-\rho)}(v[2])^2, & \text{if } v[2] \le \epsilon, \\[6pt]
                -\frac{\epsilon}{2(1-\rho)} + \frac{1}{2\rho} v[2], & \text{if } v[2] > \epsilon.
            \end{cases}
        \end{align*}
    \end{enumerate}
\end{proposition}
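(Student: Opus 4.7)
The plan is to verify the stated equilibrium via the one-shot optimality characterization (Proposition~\ref{prop:one_shot_eps}) combined with the contraction property of the Bellman operator (Proposition~\ref{prop:V_same}). The key structural observation is that $r(u,v) = v[2]\,u - u^2/2$ depends on neither the current state $x$ nor the next state $y$. Consequently, for the candidate state-independent common strategy $\pi^*(x,v) = \delta_{v[2]}$, the Bellman map $W \mapsto \inf_{\bP \in \Pf(v)} \bigl[\,(v[2])^2/2 + \rho\,\Eb^{\bP}[W(X_1^i, \mu_1)]\,\bigr]$ preserves state-independence: if $W$ is $x$-free, so is its image. Hence, by Proposition~\ref{prop:V_same}, the unique bounded fixed point $V^{\pi^*,\pi^*}$ is state-independent, justifying the form of $V^*$ in (iii). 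I then adopt the piecewise ansatz in (iii) and verify it satisfies the Bellman equation.

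Given the state-independent ansatz, the one-shot sup-inf problem decouples completely. Agent $i$'s deviation $\pi^i_0$ affects only the immediate reward $\Eb^{\pi^i_0}[v[2]\,u_0 - (u_0)^2/2]$, which is strictly concave in $u_0$; Jensen's inequality yields the unique maximizer $u_0 = v[2] \in [0,1]$, proving (i). Simultaneously, the adversary's kernel $\bP$ affects only the continuation $V^*(\mu_1)$ through $\mu_1[2] = v[2] + (1 - v[2])\,p(1,v[2],v)[2]$, since Agent $i$'s own transition does not enter once $V^*$ is state-independent. Because $V^*$ is strictly increasing in $v[2]$ in both pieces, the adversary minimizes $\mu_1[2]$, and the constraint $p(1,u,v)[2] \in [\max\{0,u-\epsilon\}, \min\{1,u+\epsilon\}]$ forces $\hat{p}(1,u,v)[2] = \max\{0,u-\epsilon\}$, proving (ii). The Lipschitz restriction in $\Pf$ is non-binding for $L$ sufficiently large, so this adversarial selection is admissible.

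Finally, I substitute $(\pi^*, \hat{p})$ into the Bellman equation to verify (iii). In Case~1 ($v[2] \le \epsilon$), $\hat{p}(1,v[2],v)[2] = 0$, so $\mu_1^* = v$, and the Bellman equation reduces to $V^*(v) = (v[2])^2/2 + \rho\,V^*(v)$, which solves to $V^*(v) = (v[2])^2/(2(1-\rho))$. In Case~2 ($v[2] > \epsilon$), forward invariance follows from the identity $\mu_1^*[2] - \epsilon = (v[2] - \epsilon)(2 - v[2]) > 0$, so subsequent iterates remain in Case~2. Inserting the linear formula $V^*(v) = -\epsilon/(2(1-\rho)) + v[2]/(2\rho)$ into the Bellman equation and simplifying, the coefficient of $v[2]$ and the constant term both reduce to the identity $\rho(2+\epsilon)=1$---exactly our parameter constraint---which also produces continuity of $V^*$ at $v[2] = \epsilon$. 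Uniqueness of the equilibrium follows from strict concavity of $r(\cdot,v)$ (unique best response whenever the underlying value is state-independent) combined with Proposition~\ref{prop:V_same}. The main technical point is the self-consistency between the piecewise ansatz $V^*$, the worst-case $\hat{p}$, and the best response $\pi^*$: the parameter constraint $\rho(2+\epsilon)=1$ is precisely the algebraic identity that glues the two regimes together under the Bellman equation.
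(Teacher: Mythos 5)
Your proposal is correct and follows essentially the same route as the paper's proof: one-shot optimality, separation of the sup over strategies (immediate reward only, strictly concave, maximized at $u^*=v[2]$) from the inf over kernels (continuation value only, minimized by pushing $\mu_1[2]$ down), followed by piecewise verification of the Bellman equation under the constraint $\rho(2+\epsilon)=1$. Your two refinements---deriving state-independence of $V^*$ from the fact that the Bellman operator preserves it together with uniqueness of the fixed point, and explicitly checking forward invariance of the region $\{v[2]>\epsilon\}$ via $\mu_1^*[2]-\epsilon=(v[2]-\epsilon)(2-v[2])>0$---tighten two points the paper handles by conjecture-and-verify or leaves implicit, but do not change the argument.
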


\begin{proof}
By Proposition \ref{prop:one_shot_eps} and the subsequent remark, a stationary equilibrium $\pi^* \in \Pi$ must satisfy the one-shot optimality condition in \eqref{eq:pi_op_one}. Since the reward function $r$ is state-independent, we conjecture that the equilibrium value function is also state-independent, i.e., $V^{\pi^*,\pi^*}(x, v) \equiv V^{\pi^*,\pi^*}(v)$ for $x = 1, 2$ and all $v \in \Pc(\Dc)$. Under this ansatz, the one-shot optimality condition \eqref{eq:pi_op_one} requires
\begin{align}
	\label{eq:one_shot_exm}
	\pi^* \in \arg\max_{\pi \in \Pi} \; \inf_{\bP_0 \in \Pf(v)} \; \Eb^{\bP_0}_{x, v} \left[ r(\pi, v) + \rho \, V^{\pi^*,\pi^*}(\mu_1)\right].
\end{align}
Observe that for every $\bP_0 = \{p(x, u, v)\}_{(x,u) \in \Dc \times U} \in \Pf(v)$,
\begin{align*}
	\Eb^{\bP_0}_{x, v} \left[ r(\pi, v) + \rho \, V^{\pi^*,\pi^*}(\mu_1)\right] = \int_0^1 r(u, v) \, \pi(\dd u) + \rho \, V^{\pi^*,\pi^*}(\mu_1),
\end{align*}
where the first integral is \emph{independent} of $\bP_0$, while $V^{\pi^*,\pi^*}(\mu_1)$ depends on $\bP_0$ through the population distribution $\mu_1$ via \eqref{eq:mu1}. Consequently, the $\sup$ and $\inf$ in \eqref{eq:one_shot_exm} can be separated:
\begin{align*}
	\pi^* \in \arg\max_{\pi \in \Pi} \left\{ \int_0^1 r(u, v) \, \pi(\dd u) + \rho \inf_{\bP_0 \in \Pf(v)} V^{\pi^*,\pi^*}(\mu_1) \right\}.
\end{align*}
Because  the second term does not depend on $\pi$, the equilibrium strategy $\pi^*$ maximizes only the reward integral:
\begin{align*}
	\pi^*(v) \in \arg\max_{\pi \in \Pc(U)} \int_0^1 \left( v[2] \cdot u - \frac{u^2}{2} \right) \pi(\dd u).
\end{align*}
For fixed $v$, the integrand $v[2] \cdot u - u^2/2$ is strictly concave in $u$ and attains its maximum at $u^* = v[2] \in [0,1]$. Therefore, $\pi^*(v) = \delta_{v[2]}$, i.e., the equilibrium strategy is a Dirac measure concentrated at $u^* = v[2]$. Note that $\pi^*$ is independent of the current state $x$ and only depends on the current population distribution $v$. As $\pi^*$ is a point mass, we will write $\pi^* = u^*(v) = v[2]$ as the equilibrium strategy in the rest of this example.

Having established that the equilibrium strategy is a point mass at $u^* = v[2]$, we can now analyze the evolution of the population distribution and the $\inf$ problem. Let the initial distribution be $\mu_0 = v = (v[1], v[2]) \in \Pc(\Dc)$. Because all agents in state 1 apply control $u^* = v[2]$, and state 2 is an absorbing state, the distribution $\mu_1$ in the next period is determined by the transition probability $p_{1 \to 2}(v) := p(1, v[2], v)[2]$:
\begin{align}
	\label{eq:mu1_exm}
	\mu_1[1] = v[1] \cdot (1 - p_{1 \to 2}(v)) \quad \text{and} \quad \mu_1[2] = v[1] \cdot p_{1 \to 2}(v) + v[2].
\end{align}

For convenience, denote $V^* (v) := V^{\pi^*,\pi^*}(v)$ the equilibrium value function for all $v \in \Pc(\Dc)$. By combining the above analysis, $V^*$ satisfies the following Bellman equation:
\begin{align}
	\label{eq:value}
	V^*(v) = \frac{1}{2} \big(v[2]\big)^2 + \rho \,  \inf_{\bP_0 \in \Pf(v)} \, V^*(\mu_1),
\end{align}
where $\mu_1$ is given by \eqref{eq:mu1_exm}.

We now analyze the $\inf$ problem in \eqref{eq:value}. Under strategy $u^*(v) = v[2]$, the transition probability $p_{1 \to 2}(v)$ takes values in the closed interval $\mathbb{I}^*_p(v) := [ \max\{0,  v[2] - \epsilon\}, \, \min\{v[2] + \epsilon, 1\} ]$, due to the given uncertainty ``ball'' structure of $\Ff(1, u^*, v)$. Because  the equilibrium strategy $\pi^* = \delta_{v[2]}$ is a point mass at $u^* = v[2]$, the population distribution $\mu_1$ depends on $\bP_0$ only through the value $p(1, u^*, v)[2] = p_{1 \to 2}(v)$. Consequently, the optimization $\inf_{\bP_0 \in \Pf(v)} V^*(\mu_1)$ reduces to $\inf_{p_{1 \to 2}(v) \in \mathbb{I}^*_p(v)} V^*(\mu_1)$.  Using the particular form of the reward function $r$, we expect the value function $V^*$ to be increasing in $v[2]$; under this conjecture, the minimizer is obtained as
\begin{align*}
	p^*_{1 \to 2}(v)  = \max\{0,  v[2] - \epsilon\}.
\end{align*}

To ensure mathematical rigor within our MFG framework, we must verify that this worst-case point $p^*_{1 \to 2}(v)$ can be extended to a full transition kernel $\bP_0^* \in  \Pf(v)$. That is, we need to construct a kernel defined on the entire control space $U$ that matches $p^*_{1 \to 2}(v)$ at $u^*$ while satisfying the global Lipschitz continuity and ball-structure constraints. Inspired by the form of $p^*_{1 \to 2}(v)$, we propose the following candidate kernel: for a given $\mu_0 = v \in \Pc(\Dc)$, 
\begin{align*}
	\hat{p}(1, u, v)[2] = \max\{0, u - \epsilon \} \quad \text{and} \quad 
	\hat{p}(2, u, v)[2]=1,  \quad \text{for all } u \in U.
\end{align*}
Note that when $x=2$, the assumptions on the ball structure dictate that for every $p(2, u, v) \in \Ff(2, u, v)$, it holds that $p(2, u, v)[2]=1$ for all $u \in U$, explaining the second condition above on $\hat{p}$.  We now verify that such a proposal $\hat{p}$ is ``admissible'': 
(1) When $u = u^* = v[2]$, $\hat{p}(1, u^*, v)[2] = \max\{0, v[2] - \epsilon\}   = p^*_{1 \to 2}(v)$; (2) for every $u \in U$, we see that $\hat{p}(x, u, v) \in \Ff(x, u, v)$ for both $x=1, 2$; (3) the mapping $u \mapsto \hat{p}(1, u, v)[2]$ is $1$-Lipschitz continuous (so is $u \mapsto \hat{p}(2, u, v)[2]$), satisfying the regularity requirement of $\Pf$ in  Assumption~\ref{ass:uncertainty}. 
Therefore, the proposed candidate is a valid worst-case transition kernel $\bP^*_0$. 

In the last step, we plug in the obtained optimal $u^*$ and $\bP^*_0$ into \eqref{eq:value} and solve for the equilibrium value function $V^*$, and we verify that it satisfies all assumed properties. We consider two cases based on whether $v[2] \leq \epsilon$ or $v[2] > \epsilon$.
\begin{enumerate}
	\item Case 1: $v[2] \le \epsilon$.  In this case, $p^*_{1 \to 2}(v)  = 0$, and the population distribution remains unchanged, with $\mu_1 = \mu_0 = v$. The Bellman equation in \eqref{eq:value} reduces to 
	\begin{align*}
		V^*(v) = \frac{1}{2} \big(v[2]\big)^2 + \rho \, V^*(v),
	\end{align*}
	implying that 
	\begin{align*}
		V^*(v) = \frac{1}{2(1 - \rho)} \big(v[2]\big)^2,  \quad \text{for all } v \in \Pc(\Dc) \text{ with } v[2] \le \epsilon.
	\end{align*}
	
	\item Case 2: $v[2] > \epsilon$.   In this case, $p^*_{1 \to 2}(v)  = v[2] - \epsilon$, and the proportion of the population in state 2 transits from $\mu_0[2] = v[2]$ to  
	\begin{align*}
		\mu_1[2] = \big(1 - v[2]\big) \, \big(v[2] - \epsilon\big) + v[2] = -  \big(v[2]\big)^2 + (2 + \epsilon) \cdot v[2] - \epsilon. 
	\end{align*}
	To solve \eqref{eq:value}, we consider an ansatz for $V^*$ in the form of 
	\begin{align*}
		V^*(v) = A + B \cdot v[2], 
	\end{align*}
	where $A$ and $B$ are two constants, with $B > 0$. Under the parameter constraint \eqref{eq:param_constraint}, the above ansatz solves \eqref{eq:value} with 
	\begin{align*}
		A = - \frac{\epsilon}{2 ( 1 - \rho)} \quad \text{and} \quad B = \frac{1}{2 \rho} > 0. 
	\end{align*}
\end{enumerate}

It is straightforward to verify that the above $V^{*}$ is continuous at $v[2] = \epsilon$ and increasing in $v[2]$, confirming the monotonicity assumption used in our derivation.
\end{proof}

To summarize, we construct a solvable MFG in this example and obtain the stationary equilibrium strategy (see Definition~\ref{def:eq}) and the worst-case transition kernel, along with the corresponding equilibrium value function, all in closed form. This example provides concrete support for the MFG framework
with model uncertainty proposed in Subsection \ref{sub:MFG}; in particular, it shows that the equilibrium concept in  Definition~\ref{def:eq} can lead to a meaningful solution. 

Finally, we note that the corresponding $N$-agent game under the same model specifications does not admit a closed-form solution. This is because the value function, $V^{N, \pi^{N,*}, \pi^{N,*}}(x, v)$, depends on both the current state $x$ and distribution $v$; in comparison, the value function in the MFG $V^*(v)$ only depends on $v$. This challenge in the $N$-agent game further demonstrates the usefulness of studying the MFG, because Theorem~\ref{thm:mf-limit} guarantees that the MFG equilibrium $\pi^*$ is an $\varepsilon$-equilibrium to the $N$-agent game when the number of agents exceeds a bound ($N \ge N_0 := N_0(\varepsilon)$). Moreover, we know from  Theorem~\ref{thm:n-agent-existence} that an equilibrium $\pi^{N,*}$ to the $N$-agent game always exists, and even though finding it can be extremely difficult, Theorem~\ref{thm:convergence-mf} ensures that any convergent sequence of such equilibria, $\{ \pi^{N,*} \}_{N\geq 2}$, must converge to the MFG equilibrium $\pi^*$, which we have obtained.

\subsection{A Comparison Example}
\label{sub:com}
Both \citet{langner2024markov} and this paper study Markov MFGs under model uncertainty, but with one fundamental difference: an equilibrium is a closed-loop (feedback) control in our setup (see Definitions~\ref{def:ad_MFG} and \ref{def:eq}), but is an open-loop control in theirs.  In their framework, transition probabilities are treated as part of an equilibrium, and thus they are determined endogenously; for that reason, we refer to their equilibrium as the \emph{endogenous equilibrium}. In contrast, our framework treats transition probabilities as \emph{exogenously} given, but uncertain, belonging to a constraint set that captures imperfect knowledge about the system dynamics. In this subsection, we conduct a detailed comparison between these two setups, examining how this key difference affects the time-consistency 
properties and the adaptability of equilibrium strategies when the realized path deviates from the forecast one.

Let us start with a recap on the key setup of \citet{langner2024markov}, and we often add an overhead \texttt{tilde} $\widetilde{\ }$ in notation when reviewing their setup. In a finite horizon with $T$ periods ($\Tb_T = \{0,1, \cdots, T\}$), let $\widetilde{\mu} = \{\widetilde{\mu}_t\}_{t \in \Tb_T}$ denote a full path of the population distribution and $\widetilde{\pi}= \{\widetilde{\pi}_t\}_{t \in \Tb_T}$ an admissible strategy. For a given pair $(\widetilde{\mu}, \widetilde{\pi})$, $\Pc(\widetilde{\mu}, \widetilde{\pi})$ denotes the set of all transition probabilities  $\widetilde{p}$ that are consistent with the distribution path $\widetilde{\mu}$ and strategy $\widetilde{\pi}$. The  worst-case objective function is defined over a \emph{fixed} distribution path by 
\begin{align*}
	J \left(\widetilde{\pi}, \widetilde{\mu} \right) = \inf_{\widetilde{p} \in \Pc(\widetilde{\mu}, \widetilde{\pi})} \mathbb{E}^{\widetilde{p}}\left[\sum_{t=0}^{T-1} \rho^t r \big( \widetilde{X}_t, \widetilde{\pi}_t,  \widetilde{\mu}_t, \widetilde{X}_{t+1} \big) + \rho^T g(\widetilde{X}_T, \widetilde{\mu}_T)\right],
\end{align*}
where $\widetilde{X}_t$ is the state process with transition governed by $\widetilde{p}$,  the expectation is taken under the probability measure induced by $\widetilde{p}$, and $r$ and $g$ are the running and terminal reward functions, respectively. 
The agent's goal is to seek an optimal strategy over the admissible set $\widetilde{\Pi}$ to maximize the objective function $J$, 
\begin{align*}
	\widetilde{V}(\widetilde{\mu}) = \sup_{\widetilde{\pi}  \in \widetilde{\Pi} } \,  J(\widetilde{\mu},  \widetilde{\pi}).
\end{align*}
An endogenous equilibrium is a triplet $(\widetilde{p}^*,\widetilde{\pi}^*,\widetilde{\mu}^*)$ such that (i) $\widetilde{p}^*$ achieves the infimum in $J(\widetilde{\mu}^*,\widetilde{\pi}^*)$, (ii)  $\widetilde{\pi}^*$ achieves the supremum in $\widetilde{V}(\widetilde{\mu}^*)$, and (iii) $\widetilde{\mu}^*$ is generated by $(\widetilde{p}^*,\widetilde{\pi}^*)$. 

As seen from the above description, the endogenous framework implicitly assumes that agents \emph{precommit} to an equilibrium strategy $\widetilde{\pi}^*$ over the \emph{entire path} of the distribution $\widetilde{\mu}^*$. This formulation raises two conceptual issues when interpreted from the perspective of robust individual optimization.

First, the endogenous equilibrium exhibits a \emph{precommitment} structure. Under model uncertainty, the realized population distributions $\widetilde{\mu}$ may differ from the equilibrium path $\widetilde{\mu}^*$ at later time periods due to uncertain transition probabilities. Since the equilibrium strategy $\widetilde{\pi}^*$ is computed with respect to the fixed path $\widetilde{\mu}^*$, it does not prescribe how agents should respond when the realized distribution deviates from this path. In this sense, the endogenous framework provides a \emph{precommitted} rather than an \emph{adaptive} strategy.

Second, the endogenous framework defines a value function $\widetilde{V}(\widetilde{\mu}_{0:T})$ that depends on the \emph{entire population path} and optimizes strategies under this fixed path. From a game-theoretic perspective, this differs from the standard approach where each agent's utility depends on the \emph{realized} outcomes rather than a presumed distributional path. Since an individual agent controls only its own state and action, one may question whether the resulting equilibrium properly reflects individual incentives when the population path is not perfectly predictable.

Our framework adopts a different approach. We consider a representative agent who applies a relaxed control $\pi^i$, where at every time $t$, $\pi^i_t := \pi^i_t(X_t^i, \mu_t)$ is a Markov (feedback) strategy depending on the state-distribution pair. We define an equilibrium $\pi^*$ as the one that maximizes the representative agent's robust utility 
$V^{\pi^i, \pi^*}$, assuming that all other agents adopt $\pi^*$. Under this definition, the equilibrium strategy $\pi_t^*$ adapts to the current \emph{realized} state-distribution pair $(X_t, \mu_t)$, providing an adaptive response mechanism to distributional uncertainty.

Given these conceptual differences between the two frameworks, a natural question arises: could the endogenous equilibrium strategy $\widetilde{\pi}^*$ still constitute an equilibrium under our robust framework?
We construct an explicit example below to show that the endogenous equilibrium $\widetilde{\pi}^*$ is \emph{not} an equilibrium under our robust framework, in the sense that an agent can strictly improve its utility by deviating from $\widetilde{\pi}^*$ when all other agents adopt $\widetilde{\pi}^*$.

\begin{example}
	\label{exm:com}
	Consider a one-period model ($T=1$) with the same specifications as in Example \ref{exm:MFG}. In addition, the terminal reward function is given by $g(x, v) = \frac{1}{2} (v[2])^2 \cdot \mathbf{1}_{x=1}$ (which is only payable if the Markov chain is in state 1 at time 1); the initial distribution $\mu_0$ is given as $v = (v[1], v[2])$, and $v[2]$ satisfies $v[2] \in [\epsilon, 1 - \epsilon]$. 
\end{example}

\begin{proposition}
Under the setup of Example~\ref{exm:com}, the endogenous equilibrium strategy $\widetilde{u}^*$ from \citet{langner2024markov} is \emph{not} a Nash equilibrium under our robust framework. Specifically, there exists a deviation $u^i \neq \widetilde{u}^*$ such that
    \[
    V^{u^i, \widetilde{u}^*}(x=1, v) > V^{\widetilde{u}^*,\widetilde{u}^*}(x=1, v).
    \]
\end{proposition}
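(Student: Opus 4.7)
The plan is to work with the explicit one-period structure to derive closed-form expressions for both sides of the strict inequality, then exhibit a concrete profitable deviation for appropriate parameters. I would begin by characterizing $\widetilde{u}^*$ in \citet{langner2024markov}'s framework: the consistency requirement in $\mathcal{P}(\widetilde{\mu}^*, \widetilde{\pi}^*)$ pins down the transition kernel once the equilibrium distribution path is fixed, so the inner infimum in $J(\widetilde{\mu}^*, \pi)$ collapses and the agent's problem reduces to maximizing the instantaneous reward $r(u, v) = v[2] u - u^2/2$, yielding $\widetilde{u}^*(v) = v[2]$. The adversarial equilibrium selection then places the worst-case transition at the upper endpoint of the Wasserstein ball, $\widetilde{p}^*(1, v[2], v)[2] = v[2] + \epsilon$, and therefore $\widetilde{\mu}_1^*[2] = v[2] + v[1](v[2] + \epsilon)$.

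Next I would evaluate $V^{\widetilde{u}^*, \widetilde{u}^*}(x=1, v)$ and $V^{u^i, \widetilde{u}^*}(x=1, v)$ under the authors' robust framework for a carefully chosen $u^i$. At $u^i = \widetilde{u}^*$, the $L$-Lipschitz constraint forces $p(1, u^i, v) = p(1, \widetilde{u}^*, v)$ and the worst-case reduces to the same scalar minimization, so the two frameworks agree at the no-deviation point. Under the deviation $u^i = v[2] - \epsilon$, however, the adversary may separately push $a := p(1, u^i, v)[2]$ upward (to send the representative agent to absorbing state 2) and $b := p(1, \widetilde{u}^*, v)[2]$ downward (to reduce $\mu_1[2]$), subject to $|a - b| \le L\epsilon$ and the respective ball constraints $a \in [v[2] - 2\epsilon, v[2]]$ and $b \in [v[2] - \epsilon, v[2] + \epsilon]$. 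For $L = 1$ (the Lipschitz constant realized by the natural kernels of Example~\ref{exm:MFG}), the adversary's optimum simultaneously attains the two ball extremes $a = v[2]$ and $b = v[2] - \epsilon$, saturating the Lipschitz bound with $a - b = \epsilon$.

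Subtracting the two resulting closed-form expressions, the difference $V^{u^i, \widetilde{u}^*}(1, v) - V^{\widetilde{u}^*, \widetilde{u}^*}(1, v)$ decomposes into a reward loss of order $\epsilon^2/2$ against a terminal-reward change driven by two competing effects: the agent's probability of remaining in state 1 rises by $\epsilon$ (gain), while $\mu_1[2]$ falls by $2 v[1]\epsilon$ (loss). Expanding the squares shows the net terminal gain is linear in $\epsilon$ with a coefficient that stays positive for $v[2]$ in the upper portion of the admissible range $[\epsilon, 1-\epsilon]$, so it dominates the quadratic reward loss. The main obstacle is pinning down the precise parameter regime: the resulting inequality reduces to $\rho\bigl[A^2 - 4 v[1]^2 v[2](2-v[2])\bigr] > \epsilon$ with $A := v[2](2-v[2]) + v[1]\epsilon$, which at $v[2] = 1-\epsilon$ simplifies to $\rho\bigl(1 - 4\epsilon^2(1-\epsilon^2)\bigr) > \epsilon$ and holds whenever $\rho = 1/(2+\epsilon) > \epsilon$, e.g., for all $\epsilon < \sqrt{2} - 1$. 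A direct numerical check at $v[2] = 0.7$, $\epsilon = 0.1$, $\rho = 1/2.1$ then yields $V^{u^i, \widetilde{u}^*}(1, v) \approx 0.295 > 0.287 \approx V^{\widetilde{u}^*, \widetilde{u}^*}(1, v)$, establishing the claim.
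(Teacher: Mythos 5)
Your argument breaks down at the very first step: the characterization of the endogenous equilibrium $\widetilde{u}^*$ is wrong. In the framework of \citet{langner2024markov}, fixing the distribution path $\widetilde{\mu}^*$ does \emph{not} collapse the inner infimum or reduce the agent's problem to maximizing the instantaneous reward. The terminal reward $g(\widetilde{X}_1,\widetilde{\mu}_1^*)=\tfrac{1}{2}(\widetilde{\mu}_1^*[2])^2\mathbf{1}_{\widetilde{X}_1=1}$ still depends on the agent's own terminal state, whose law depends on the control $u$ through the worst-case transition $\widetilde{p}(u)\in[\max\{0,u-\epsilon\},\min\{1,u+\epsilon\}]$. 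The adversary selects the upper endpoint $\widetilde{p}^*(u)=u+\epsilon$, so the agent maximizes $v[2]u-\tfrac{u^2}{2}+\tfrac{\rho(\widetilde{\mu}_1^*[2])^2}{2}(1-u-\epsilon)$, giving $\widetilde{u}^*=v[2]-\tfrac{\rho(\widetilde{\mu}_1^*[2])^2}{2}<v[2]$, with $\widetilde{\mu}_1^*[2]$ determined by the fixed-point equation $\widetilde{\mu}_1^*[2]=v[2]+(1-v[2])(\widetilde{u}^*+\epsilon)$. Your claim that $\widetilde{u}^*=v[2]$ therefore misidentifies the strategy the proposition is about, and every subsequent computation --- the choice of deviation $u^i=v[2]-\epsilon$, the closed-form difference, and the numerical check --- addresses the wrong reference strategy. (Indeed, the correct picture is the opposite of yours: the endogenous $\widetilde{u}^*$ sits \emph{below} the myopic optimum $v[2]$, and the profitable deviation moves \emph{upward} toward $v[2]$.)

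A second, independent error is the assertion that the two frameworks ``agree at the no-deviation point.'' At $u^i=\widetilde{u}^*$ the robust worst case is $\inf_{P\in\mathbb{I}_p(\widetilde{u}^*)}(1-P)\bigl(v[2]+(1-v[2])P\bigr)^2$, a cubic in $P$ whose minimum over the interval can land at the \emph{lower} endpoint (the adversary also wants to depress $\mu_1[2]$, which enters quadratically), whereas the endogenous worst case is always the upper endpoint because $\widetilde{\mu}_1^*$ is frozen. So $V^{\widetilde{u}^*,\widetilde{u}^*}$ under the robust framework is generally not equal to the endogenous value and must be computed separately. Your description of the two-variable adversarial problem under a deviation (coupling $P(u^i)$ and $P(\widetilde{u}^*)$ through the Lipschitz constraint, with the constraint binding at the optimum) does match the correct structure, but to repair the proof you must first solve the fixed-point equation for $\widetilde{\mu}_1^*[2]$, recompute $\widetilde{u}^*$, evaluate $V^{\widetilde{u}^*,\widetilde{u}^*}$ from the cubic minimization, and then exhibit a deviation and verify the strict inequality numerically at the corrected values.
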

\begin{proof}
Because the model has only one period and state 2 is an absorbing state, the Markov system is fully determined by the transition probability from state 1 at time 0 to state 2 at time 1 ($\Pb(\widetilde{X}_0 = 1 \to \widetilde{X}_1 = 2)$), which we denote by $p$ for simplicity.  Note that for a realized control value $u \in U$, $p := p(u)$ takes values in $\mathbb{I}_p(u) := [\max\{0, u - \epsilon\}, \min\{1, u + \epsilon\}]$ by the ball structure assumption on the uncertainty set (see the analysis of Example~\ref{exm:MFG}). 

We first derive the endogenous equilibrium  $(\widetilde{p}^*,\widetilde{\pi}^*,\widetilde{\mu}^*_1)$ under the endogenous framework, where $\widetilde{p}^* = \Pb(\widetilde{X}_0 = 1 \to \widetilde{X}_1 = 2)$, $\widetilde{\pi}^*$ is a relaxed control applied at time 0, and $\widetilde{\mu}_1^*$ is the population distribution at time 1, given $\mu_0 = v$. In the subsequent analysis, we consider agents who are in state 1 at time 0 ($\widetilde{X}_0 = x = 1$); note that agents who are in state 2 at time 0 will remain in state 2 at time 1, and thus will not receive the terminal reward. A key observation is that with the population path $\{\mu_0, \widetilde{\mu}^*_1\}$ fixed, the terminal reward depends on the transition probability $\widetilde{p}(u)$ only via the terminal state $\widetilde{X}_1$. Consequently, we obtain 
\begin{align*}
	\mathbb{E}^{\widetilde{p}} \left[g (\widetilde{X}_1, \widetilde{\mu}_1^*)\right] =  \mathbb{E}^{\widetilde{p}} \left[ \mathbf{1}_{\widetilde{X}_1 = 1}  \right] \cdot \frac{(\widetilde{\mu}_1^*[2])^2}{2} = (1 - \widetilde{p}(u)) \cdot  \frac{(\widetilde{\mu}_1^*[2])^2}{2}.  
\end{align*}

The robust optimization problem under the endogenous framework then reads as  
\begin{align}
	\label{eq:langner_opt}
	\sup_{\widetilde{\pi} \in \Pi} \, \int_0^1 \left[ v[2] \cdot u - \frac{u^2}{2} + \rho \inf_{ \widetilde{p} (u) \in \mathbb{I}_p(u)} (1- \widetilde{p} (u)) \cdot \frac{(\widetilde{\mu}_1^*[2])^2}{2} \right] \widetilde{\pi}(\dd u).
\end{align}
Since $(1 - \widetilde{p}(u))$ is decreasing in $\widetilde{p}(u)$, the infimum is achieved at the largest feasible value: $\widetilde{p}^*(u) = \min\{1, u + \epsilon\}$. We first consider the case $u \le 1 - \epsilon$, where $\widetilde{p}^*(u) = u + \epsilon$, and the integrand in \eqref{eq:langner_opt} becomes
\begin{align*}
	f(u) := v[2] \cdot u - \frac{u^2}{2} + \frac{\rho (\widetilde{\mu}_1^*[2])^2}{2} (1 - u - \epsilon).
\end{align*}
Taking the derivative $f'(u) = v[2] - u - \frac{\rho (\widetilde{\mu}_1^*[2])^2}{2}$ and setting it to zero yields the candidate optimizer
\begin{align}
	\label{eq:u_star_endo}
	\widetilde{u}^* = v[2] - \frac{\rho (\widetilde{\mu}_1^*[2])^2}{2}.
\end{align}
As the integrand is strictly concave in $u$, the supremum in \eqref{eq:langner_opt} is achieved by the point mass $\widetilde{\pi}^* = \delta_{\widetilde{u}^*}$. Applying the consistency condition on the terminal distribution:
\begin{align}
	\label{eq:en_mu}
	\widetilde{\mu}_1^*[2] = v[2] + v[1] \cdot \widetilde{p}^*(\widetilde{u}^*) = v[2] + (1 - v[2]) \cdot (\widetilde{u}^* + \epsilon).
\end{align}
Substituting \eqref{eq:u_star_endo} into \eqref{eq:en_mu}, we obtain a fixed-point equation for $\widetilde{\mu}_1^*[2]$:
\begin{align}
	\label{eq:fixed_point}
	\widetilde{\mu}_1^*[2] = v[2] + (1 - v[2]) \cdot \left( v[2] - \frac{\rho (\widetilde{\mu}_1^*[2])^2}{2} + \epsilon \right) =: \widetilde{\phi}(\widetilde{\mu}_1^*[2]).
\end{align}
As $\widetilde{\phi}'(z) = -\rho (1-v[2]) z < 0$ for $z > 0$, the mapping $\widetilde{\phi}$ is strictly decreasing. To establish the existence of a fixed point, we verify that $\widetilde{\phi}(z_0) > z_0$ for $z_0 := v[2]+(1-v[2]) \epsilon$. This is equivalent to showing $v[2] > \frac{\rho}{2} z_0^2$. Indeed, using $\rho \le 1$, $z_0 \le 1$, and $\epsilon \le v[2]$, we have
\begin{align*}
	\frac{\rho}{2} z_0^2 \le \frac{1}{2} z_0 \le \frac{1}{2} \big( v[2] + (1-v[2]) v[2] \big) = \frac{1}{2} v[2] (2 - v[2]) < v[2].
\end{align*}
Similarly, $\widetilde{\phi}(1) = v[2] + (1-v[2])(v[2] + \epsilon - \frac{\rho}{2}) < 1$ is straightforward. By the intermediate value theorem, there exists a unique fixed point $\widetilde{\mu}_1^*[2] \in (z_0,1)$. From \eqref{eq:u_star_endo}, the corresponding equilibrium strategy satisfies $\widetilde{u}^* \in (0, v[2]]$. As $v[2] \le 1 - \epsilon$ by assumption, we have $\widetilde{u}^* \le 1 - \epsilon$, confirming that the solution lies within the case considered above.

Next, we show that an agent can achieve a strict gain by deviating from $\widetilde{\pi}^*$ when evaluated under our robust framework. Assume that Agent $i$ is in state 1 at time 0 and uses a regular control $u^i \in [0,1]$ (that is, $\pi^i = \delta_{u^i}$) and that all other agents apply the endogenous equilibrium strategy $\widetilde{u}^*$ (that is, $\pi = \delta_{\widetilde{u}^*}$). Different from the endogenous setup, the terminal distribution $\mu_1$ in our framework is \emph{not} fixed a priori, but rather depends on the transition kernel $\bP = \{p(x, u)\}_{(x, u) \in \{1,2\} \times [0,1]}$, where we have suppressed the dependence on the initial distribution $\mu_0 = v$ in notation. Because $p(2, u)[1] = 0$ and $p(2, u)[2] = 1$ for all $u \in [0,1]$ (state 2 is an absorbing state), the kernel $\bP$ is fully determined by $P(\cdot) :=\{p(1, u)[2]\}_{u \in [0,1]}$. The uncertainty set $\Pf(v)$ requires that $P(u) \in \mathbb{I}_p(u)$ for all $u \in U$, and that $P(\cdot)$ is $L$-Lipschitz continuous (with $L = 1$ in this example).

For every $P(\cdot) \in \Pf(v)$, the terminal distribution and Agent $i$'s survival probability are given by
\begin{align*}
	\mu_1[2] = v[2] + (1 - v[2]) \cdot P(\widetilde{u}^*) \quad \text{and} \quad 
	\Eb^{P(\cdot)} \left[ \mathbf{1}_{X^i_1 = 1}  \right] = 1 - P(u^i).
\end{align*}
Therefore, Agent $i$'s utility function under our framework is obtained by
\begin{align}
	\label{eq:Vi_exm}
	V^{u^i, \widetilde{u}^*}(x=1, v) = v[2] \cdot u^i - \frac{(u^i)^2}{2} + \frac{\rho}{2} \, \inf_{P(\cdot) \in \Pf(v)} \, (1 - P(u^i)) \cdot \left( v[2] + (1 - v[2]) \cdot P(\widetilde{u}^*)  \right)^2.
\end{align}
As  both $\pi^i = \delta_{u^i}$ and $\pi = \delta_{\widetilde{u}^*}$ are point masses, the population distribution $\mu_1$ depends on $P(\cdot)$ only through the single value $P(\widetilde{u}^*)$, and Agent $i$'s terminal state depends only on $P(u^i)$. Thus, the infimum in \eqref{eq:Vi_exm} reduces to an optimization over two scalars $(P(u^i), P(\widetilde{u}^*))$, subject to: (i) $P(u^i) \in \mathbb{I}_p(u^i)$, (ii) $P(\widetilde{u}^*) \in \mathbb{I}_p(\widetilde{u}^*)$, and (iii) the Lipschitz constraint $|P(u^i) - P(\widetilde{u}^*)| \le |u^i - \widetilde{u}^*|$.

The worst-case scenario faces competing objectives: (1) choose large $P(u^i)$ to reduce Agent $i$'s survival probability $(1-P(u^i))$; (2) choose small $P(\widetilde{u}^*)$ to reduce the terminal distribution $\mu_1[2]$ and hence the reward value. We analyze the infimum problem in two cases.

\emph{Case 1: $u^i = \widetilde{u}^*$.} When Agent $i$ uses the same control as the population, the infimum involves only a single variable $P := P(\widetilde{u}^*) \in \mathbb{I}_p(\widetilde{u}^*)$:
\begin{align*}
	\inf_{P \in \mathbb{I}_p(\widetilde{u}^*)} (1 - P) \cdot \left( v[2] + (1 - v[2]) \cdot P \right)^2.
\end{align*}

This is a cubic function in $P$. A straightforward calculation shows that this cubic has no critical points in $[0,1]$, so the minimum is attained at one of the boundary points of $\mathbb{I}_p(\widetilde{u}^*)$.

\emph{Case 2: $u^i \neq \widetilde{u}^*$.} When Agent $i$ uses a different control $u^i$, the infimum in \eqref{eq:Vi_exm} involves two variables $(P(u^i), P(\widetilde{u}^*))$, subject to the constraints: (i) $P(u^i) \in \mathbb{I}_p(u^i)$, (ii) $P(\widetilde{u}^*) \in \mathbb{I}_p(\widetilde{u}^*)$, and (iii) the Lipschitz constraint $|P(u^i) - P(\widetilde{u}^*)| \le |u^i - \widetilde{u}^*|$. The adversary faces competing objectives: maximizing $P(u^i)$ to reduce Agent $i$'s survival probability $(1 - P(u^i))$, while minimizing $P(\widetilde{u}^*)$ to reduce the terminal distribution $\mu_1[2]$. Because these objectives push $P(u^i)$ and $P(\widetilde{u}^*)$ in opposite directions, the Lipschitz constraint must be binding at the optimum. When $u^i > \widetilde{u}^*$, this implies $P(u^i) - P(\widetilde{u}^*) = u^i - \widetilde{u}^*$. Setting $\delta := P(\widetilde{u}^*) - \widetilde{u}^*$, we have $P(u^i) = u^i + \delta$, and the infimum reduces to:
\begin{align*}
	\inf_{\delta} \, (1 - u^i - \delta) \cdot \left( v[2] + (1 - v[2]) \cdot (\widetilde{u}^* + \delta) \right)^2,
\end{align*}
where the feasible range of $\delta$ is determined by the constraints $P(\widetilde{u}^*) = \widetilde{u}^* + \delta \in \mathbb{I}_p(\widetilde{u}^*)$ and $P(u^i) = u^i + \delta \in  \mathbb{I}_p(u^i)$.

To proceed with a numerical demonstration, we set $v[2] = 0.3$, $\epsilon = 0.2$, and $\rho = 0.9$. Solving the fixed-point equation \eqref{eq:fixed_point} yields
\begin{align*}
	\widetilde{\mu}_1^*[2] = 0.5535 \quad \text{and} \quad \widetilde{u}^* = 0.3 - \frac{0.9 \times 0.5535^2}{2} = 0.1621.
\end{align*}

Under our robust framework, if Agent $i$ uses $\widetilde{u}^*$ (Case 1), we have $P \in \mathbb{I}_p(\widetilde{u}^*) = [0, 0.3621]$. Evaluating the cubic $h(P) = (1-P)(v[2] + (1-v[2])P)^2$ at the boundaries gives $h(0) = 0.09$ and $h(0.3621) = 0.1954$, so the infimum is achieved at $P^* = 0$. Therefore,
\begin{align*}
	V^{\widetilde{u}^*,\widetilde{u}^*}(x=1, v) = 0.3 \times 0.1621 - \frac{0.1621^2}{2} + \frac{0.9}{2} \times 0.09 = 0.0760.
\end{align*}

Now consider a deviation to $u^i = 0.26$, which is closer to $v[2] = 0.3$ than $\widetilde{u}^*$. The feasible range for $\delta$ is determined by:
\begin{align*}
	P(\widetilde{u}^*) = \widetilde{u}^* + \delta \ge 0 \;\Rightarrow\; \delta \ge -0.1621, \quad
	P(\widetilde{u}^*) = \widetilde{u}^* + \delta \le 0.3621 \;\Rightarrow\; \delta \le 0.2,
\end{align*}
together with the constraints from $P(u^i) = u^i + \delta \in [0.06, 0.46]$, which are less restrictive. Thus, $\delta \in [-0.1621, 0.2]$.
The objective function $g(\delta) = (1 - u^i - \delta)(v[2] + (1-v[2])(\widetilde{u}^* + \delta))^2$ is a cubic in $\delta$. Taking the derivative and checking the critical points shows that the minimum over $[-0.1621, 0.2]$ is attained at a boundary. We compute
\begin{align*}
	g(-0.1621) &= (1 - 0.26 + 0.1621) \times (0.3 + 0.7 \times 0)^2 = 0.9021 \times 0.09 = 0.0812, \\
	g(0.2) &= (1 - 0.26 - 0.2) \times (0.3 + 0.7 \times 0.3621)^2 = 0.54 \times 0.5535^2 = 0.1654.
\end{align*}
Therefore, the infimum is achieved at $\delta^* = -0.1621$, corresponding to $P^*(\widetilde{u}^*) = 0$ and $P^*(u^i) = 0.0979$. Agent $i$'s utility under $u^i$ is
\begin{align*}
	V^{u^i, \widetilde{u}^*}(x=1, v) &= 0.3 \times 0.26 - \frac{0.26^2}{2} + \frac{0.9}{2} \times 0.0812 = 0.0442 + 0.0365 = 0.0807.
\end{align*}

As $V^{u^i, \widetilde{u}^*} = 0.0807 > V^{\widetilde{u}^*,\widetilde{u}^*} = 0.0760$, Agent $i$ achieves a strictly higher utility by deviating from $\widetilde{u}^*$ to $u^i = 0.26$. The reason is the favorable trade-off between immediate and terminal rewards: the immediate reward increases from $0.035$ to $0.044$ (a gain of $0.009$), while the worst-case terminal reward decreases from $0.041$ to $0.037$ (a loss of $0.004$), yielding a net improvement of approximately $6.2\%$. This demonstrates that the endogenous equilibrium strategy $\widetilde{u}^*$ is \emph{not} a Nash equilibrium when agents evaluate payoffs under our robust framework.
\end{proof}

We now explain why the endogenous equilibrium fails to capture this deviation. The reason lies in its solution procedure: it first fixes a terminal distribution $\widetilde{\mu}_1^*$, then solves the inf-sup problem \eqref{eq:langner_opt} treating $\widetilde{\mu}_1^*$ as given, and finally verifies the consistency condition \eqref{eq:fixed_point}. This circular approach handles the coupling between the transition probabilities and population distribution, reducing the worst-case problem to a one-dimensional optimization over $\widetilde{p}$ alone. Essentially, the endogenous framework solves a fixed-point equation that lacks genuine game-theoretic content, determining a self-consistent triplet $(\widetilde{p}^*,\widetilde{\pi}^*,\widetilde{\mu}_1^*)$ without properly accounting for how individual transitions affect collective outcomes under model uncertainty.

In contrast, our framework treats the population distribution as genuinely stochastic: $\mu_1$ depends on the realized transition kernel $P(\cdot)$ selected by adversarial nature. When Agent $i$ deviates to $u^i \neq \widetilde{u}^*$, the worst-case optimization in \eqref{eq:Vi_exm} involves choosing both $P(u^i)$ and $P(\widetilde{u}^*)$ subject to the Lipschitz constraint, capturing the coupling between individual transitions and the population distribution. The numerical example demonstrates that when all other agents adopt $\widetilde{u}^*$, an individual can strictly improve their robust utility by deviating to $u^i = 0.26$. This shows that the endogenous equilibrium $\widetilde{u}^*$, while self-consistent within its own formulation, does not generally constitute a Nash equilibrium under our robust framework.

\section{Proofs of Main Results}
\label{sec:proof}

\subsection{Preliminaries}
\label{subsec:pre}
In this subsection, we introduce notation and preliminary results that will be used to prove the main results in Subsection \ref{sub:main}. 

For every $\bP = \{p(x, u)\}_{(x, u) \in \Dc \times U} \in \prod_{\Dc \times U} \, \Pc(\Dc)$ and $\phi \in \Pc(U)$, define 
\begin{align*}
	f^{\bP, \phi}(x, y) := \int_U \, p(x, u)[y] \, \phi(\dd u), \quad x, y \in \Dc. 
\end{align*}
Think of $\bP$ as a transition kernel and $\phi$ as a relaxed control, and assume that an agent applies $\phi$ and is currently in state $x$; then, $f^{\bP, \phi}(x, y)$ is the probability that it will transition to state $y$ in the next period. We further extend $\phi$ to be a function, mapping from $\Dc$ to $\Pc(U)$; this extension allows the agent to make decisions $\phi := \phi(x)$ based on its current state $x \in \Dc$. Now given such a function $\phi$ and a transition kernel $\bP$, define 
\begin{align}
	\label{eq:Phi}
	\Phi(\bP, \phi) := \left( f^{\bP, \phi(x)}(x, y)\right)_{x, y \in \Dc} \in [0,1]^{d \times d},
\end{align}
which is a full transition matrix for a controlled Markov chain with the state space $\Dc$. Note that $\Phi(\bP, \phi)[x, y] = f^{\bP, \phi(x)}(x, y)$.

The following lemma establishes the continuity of the mapping $\Phi$ in \eqref{eq:Phi}, which is essential for verifying the joint continuity conditions required when applying Berge's maximum theorem.

\begin{lemma}
	\label{lem:transition-continuity}
	Let $\bP= \{p(x, u)\}_{(x, u) \in \Dc \times U}, \bP^n = \{p^n(x, u)\}_{(x, u) \in \Dc \times U} \in \prod_{\Dc \times U} \, \Pc(\Dc)$ and $\phi, \phi^n: \Dc \to \mathcal{P}(U)$. 
	Assume $p(x,\cdot)[y]: U \to [0,1]$ is $L$-Lipschitz continuous for all $x, y \in \Dc$.
	If $\bP^n \to \bP$ uniformly (that is, $\sup_{x,y,u} |p^n(x,u)[y] - p(x,u)[y]| \to 0$) and $\phi^n \to \phi$ pointwise (that is, $d_{W_1}(\phi^n(x), \phi(x)) \to 0$ for all $x \in \Dc$), then $\Phi(\bP^n, \phi^n) \to \Phi(\bP, \phi)$ entrywise (that is, $\Phi(\bP^n, \phi^n)[x, y] \to \Phi(\bP, \phi)[x, y]$ for all $x, y \in \Dc$).
\end{lemma}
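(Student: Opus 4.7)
The plan is to prove entrywise convergence by a standard triangle-inequality split that separates the perturbation of the transition kernel from the perturbation of the control. Fix $x, y \in \Dc$. I would write
\[
\Phi(\bP^n, \phi^n)[x,y] - \Phi(\bP, \phi)[x,y] = \underbrace{\int_U \bigl(p^n(x,u)[y] - p(x,u)[y]\bigr)\, \phi^n(x)(\dd u)}_{=: A_n} + \underbrace{\int_U p(x,u)[y]\, \bigl(\phi^n(x) - \phi(x)\bigr)(\dd u)}_{=: B_n},
\]
and bound $A_n$ and $B_n$ separately. Both pieces are needed because the integrand and the integrating measure change simultaneously, so neither dominated convergence nor weak convergence alone suffices.

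For $A_n$, I would use the uniform convergence hypothesis directly: since $\phi^n(x)$ is a probability measure on $U$,
\[
|A_n| \le \sup_{u \in U} \bigl|p^n(x,u)[y] - p(x,u)[y]\bigr| \le \sup_{x', u', y'} \bigl|p^n(x',u')[y'] - p(x',u')[y']\bigr| \to 0.
\]
For $B_n$, I would exploit the assumed $L$-Lipschitz regularity of $u \mapsto p(x,u)[y]$ together with the Kantorovich–Rubinstein dual representation of $d_{W_1}$ on the compact space $U$: for any $L$-Lipschitz function $f: U \to \Rb$ and any $\mu, \nu \in \Pc(U)$,
\[
\Bigl|\int_U f \, \dd \mu - \int_U f \, \dd \nu\Bigr| \le L \cdot d_{W_1}(\mu, \nu).
\]
Applying this with $f(u) = p(x,u)[y]$, $\mu = \phi^n(x)$, $\nu = \phi(x)$ yields $|B_n| \le L \cdot d_{W_1}(\phi^n(x), \phi(x)) \to 0$ by the pointwise convergence assumption.

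Combining the two bounds gives $\Phi(\bP^n, \phi^n)[x,y] \to \Phi(\bP, \phi)[x,y]$ for every $(x,y) \in \Dc \times \Dc$, which is the desired entrywise convergence. I do not anticipate any real obstacle: the only subtle point is invoking Kantorovich–Rubinstein on a compact metric space, which is standard, and verifying that this is legitimate only requires that the test function is Lipschitz (not that it be normalized to have Lipschitz constant $1$, since the $L$ factor appears explicitly in the bound). The uniform convergence hypothesis on $\bP^n$ is somewhat stronger than what is strictly needed for $A_n$ (pointwise in $u$ plus equi-boundedness would suffice), but using it as stated keeps the proof short.
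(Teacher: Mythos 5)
Your proposal is correct and follows essentially the same route as the paper's proof: the same two-term triangle-inequality decomposition (kernel perturbation against the measure $\phi^n(x)$, then measure perturbation against the fixed kernel $\bP$), with the first term controlled by the uniform convergence of $\bP^n$ and the second by the $L$-Lipschitz property of $u \mapsto p(x,u)[y]$ via Kantorovich--Rubinstein duality. No gaps.
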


\begin{proof}
	We decompose the difference for each $[x,y]$-entry of the matrices by
	\begin{align*}
		 \left| \Phi(\bP^n, \phi^n) - \Phi(\bP, \phi) \right|[x, y] &= \left|f^{\bP^n, \phi^n(x)}(x, y) -  f^{\bP, \phi(x)}(x, y) \right| \\
		& \le \left| f^{\bP^n, \phi^n(x)}(x, y) -  f^{\bP, \phi^n(x)}(x, y)\right| + \left|f^{\bP, \phi^n(x)}(x, y) -  f^{\bP, \phi(x)}(x, y) \right|. 
	\end{align*}
	
	Regarding the first term, the uniform convergence of $\bP^n \to \bP$ implies that
	\begin{align*}
		\left| f^{\bP^n, \phi^n(x)}(x, y) -  f^{\bP, \phi^n(x)}(x, y)\right| 
		&\leq \int_U \left|p^n(x,u)[y] - p(x,u)[y] \right| \, \phi^n(x) (\dd u) \\
		&\leq \sup_{u \in U} \left|p^n(x,u)[y] - p(x,u)[y] \right| \to 0.
	\end{align*}
	
    To handle the second term, note that by \eqref{eq:Pf}, the map $u \mapsto p(x, u)[y]$ is $L$-Lipschitz for all $x, y \in \Dc$. By the Kantorovich-Rubinstein duality and the Wasserstein-1 convergence $\phi^n(x) \to \phi(x)$,
    \begin{align*}
     \left|f^{\bP, \phi^n(x)}(x, y) -  f^{\bP, \phi(x)}(x, y) \right| &= \left| \int_U p(x, u)[y] \, \phi^n(x) (\dd u) - \int_U p(x, u)[y] \, \phi(x) (\dd u) \right| \\
     &\leq L \cdot d_{W_1}(\phi^n(x), \phi(x)) \to 0.
    \end{align*}
	
	Combining the above two results proves the entrywise convergence $\Phi(\bP^n, \phi^n) \to \Phi(\bP, \phi)$. In fact, because the state space $\Dc$ is finite, entrywise convergence is equivalent to convergence in any matrix norm. 
\end{proof}

In Subsection \ref{sub:MFG_equ}, we show that a stationary equilibrium $\pi^*$ to the MFG can be equivalently characterized by the one-shot optimality condition in \eqref{eq:pi_op_one} (see Proposition~\ref{prop:one_shot_eps} for the $\varepsilon$-equilibrium). Our next goal is to study the utility function of one-shot perturbed strategies, $\pi^i \otimes_1 \pi^*$, which applies $\pi^i$ in the first period and switches to $\pi^*$ from the second period onward. Applying the DPP in \eqref{eq:V}, we obtain 
\begin{align}
	\label{eq:one-shot-mf-auxiliary}
	V^{\pi^i \otimes_1 \pi^*,\pi^*}(x, v) = \inf_{\bP \in \Pf(v)} \mathbb{E}^{\bP}_{x, v} \left[
	r(x,\pi^i, v, X_1^i) + \rho \cdot V^{\pi^*,\pi^*}(X_1^i, \mu_1) \right].
\end{align}
For every $\bP := \bP(v) = \{p(x, u, v)\}_{(x, u) \in \Dc \times U}$, the probability that $X_1^i = y$ is equal to $f^{\bP(v), \pi^i(x,v)}(x, y)$ and the population distribution in the next period is given by $\mu_1 := \mu_1^{\bP, \pi^*}(v) = \Phi(\bP(v), \pi^*(v)) \cdot v$; hereafter, we may add an argument $v$ to emphasize the dependence on the population distribution $v$, and the same rule also applies to other variables. Using these results, we rewrite the expectation in \eqref{eq:one-shot-mf-auxiliary} as 
\begin{equation}
	\label{eq:one-shot-mf-expanded}
	\begin{split}
		V^{\pi^i \otimes_1 \pi^*,\pi^*}(x, v) = \inf_{\bP \in \Pf(v)} \sum_{y \in \Dc} 
		\Bigg[&\int_U r(x,u,v, y) p(x, u)[y] \, \pi^i(\dd u|x,v) \\
		&+ \rho \, \int_U p(x, u)[y] \, \pi^i(\dd u|x,v)  \cdot V^{\pi^*,\pi^*} \left(y, \mu_1^{\bP, \pi^*}(v) \right) \Bigg].
	\end{split}
\end{equation}
Inside the square bracket, the first term is the \emph{immediate reward}, and the second term is referred to as the \emph{continuation value}, when the future state is in $y$. For convenience, we denote 
\begin{equation}
	\label{eq:auxiliary-mf-W}
	W^{\bP, \pi^*} (x,v,y) := V^{\pi^*,\pi^*} \left(y, \mu_1^{\bP, \pi^*}(v) \right),
\end{equation}
which can be interpreted as the equilibrium value function for an agent who is in state $y$ when the population distribution is $\mu_1^{\bP, \pi^*}(v)$.

We now derive the analogous one-shot utility for the $N$-agent game. 
The structure parallels the MFG analysis but accounts for the 
stochastic evolution of the empirical distribution.
For the $N$-agent game, assume Agent $i$ applies a one-shot deviation $\pi^{N,i} \otimes_1 \pi^{N,*}$, where $\pi^{N,i} \in \Pi$ is used in period 0, and $\pi^{N,*} \in \Pi$ is the common strategy thereafter. Using the DPP analogous to \eqref{eq:V}, we have
\begin{align}
	\label{eq:one-shot-nagent-auxiliary}
	V^{N,\pi^{N,i} \otimes_1 \pi^{N,*},\pi^{N,*}}(x, v) = \inf_{\bP \in \Pf(v)} \mathbb{E}^{\bP}_{x, v} \left[
	r(x,\pi^{N,i}, v, X_1^{N,i}) + \rho \cdot V^{N,\pi^{N,*},\pi^{N,*}}(X_1^{N,i}, \mu_1^N) \right],
\end{align}
where $X_1^{N,i}$ is Agent $i$'s state at time 1, and $\mu_1^N$ is the \emph{random} empirical distribution defined in \eqref{eq:mu_N}.

The key difference from the MFG case lies in the continuation value. In the MFG, the next-period distribution $\mu_1^{\bP, \pi^*}(v)$ is deterministic (given $\bP$ and $\pi^*$); however, in the $N$-agent game, $\mu_1^N$ is random because it depends on the independent transitions of all $N$ agents. To make this explicit, we expand the expectation in \eqref{eq:one-shot-nagent-auxiliary}.
For every $\bP := \bP(v) = \{p(x, u, v)\}_{(x, u) \in \Dc \times U} \in \Pf(v)$, Agent $i$'s transition to state $y$ occurs with probability $f^{\bP(v), \pi^{N,i}(x,v)}(x, y)$ (as in the MFG case). However, the continuation value now involves:
\begin{align}
	\label{eq:continuation-nagent}
	\mathbb{E}^{\bP}_{x,v}\left[V^{N,\pi^{N,*},\pi^{N,*}}(X_1^{N,i}, \mu_1^N)\right] 
	= \sum_{y \in \Dc} f^{\bP(v), \pi^{N,i}(x,v)}(x, y) \cdot \mathbb{E}^{\bP}_{x,v}\left[V^{N,\pi^{N,*},\pi^{N,*}}(y, \mu_1^N) \,\Big|\, X_1^{N,i} = y\right].
\end{align}

The conditional expectation in \eqref{eq:continuation-nagent} captures the randomness of the empirical distribution $\mu_1^N$, given that Agent $i$ reaches state $y$. Given the initial configuration $(X_0^{N,i} = x, \mu_0^N = v)$, we know the initial states of all $N$ agents. Conditional on $X_1^{N,i} = y$, Agent $i$'s transition is fixed, but the other $N-1$ agents transition independently according to $\bP$ and $\pi^{N,*}$. The empirical distribution $\mu_1^N$ is then determined by the collective outcomes of these $N-1$ independent transitions plus Agent $i$'s fixed state $y$.

To emphasize this conditioning, we introduce the notation $\mu_1^{N,y}$ for the conditional empirical distribution
\begin{align}
	\label{eq:mu1N-y-decomposition}
	\mu_1^{N,y} := \frac{1}{N}\left(\delta_y + \sum_{j \neq i} \delta_{X_1^{N,j}}\right),
\end{align}
where the sum is over the $N-1$ other agents. This notation makes explicit that Agent $i$ is in state $y$ at time 1, while the other agents' states remain random. Note that although the $\frac{1}{N}\delta_y$ term is deterministic and has magnitude $O(1/N)$, it is important to track this term carefully in the convergence analysis.

Combining \eqref{eq:one-shot-nagent-auxiliary} and \eqref{eq:continuation-nagent}, we obtain the expanded form
\begin{equation}
	\label{eq:one-shot-nagent-expanded}
	\begin{split}
		V^{N,\pi^{N,i} \otimes_1 \pi^{N,*},\pi^{N,*}}(x, v) = \inf_{\bP \in \Pf(v)} \sum_{y \in \Dc} 
		\Bigg[&\int_U r(x,u,v, y) p(x, u)[y] \, \pi^{N,i}(\dd u|x,v) \\
		&+ \rho \, \int_U p(x, u)[y] \, \pi^{N,i}(\dd u|x,v) \cdot W^{N,\bP, \pi^{N,*}}(x,v,y) \Bigg],
	\end{split}
\end{equation}
where the \emph{$N$-agent auxiliary function $W^{N,\bP, \pi^{N,*}}(x,v,y)$ } is 
\begin{equation}
	\label{eq:auxiliary-nagent-W}
	W^{N,\bP, \pi^{N,*}}(x,v,y) := \mathbb{E}^{\bP}_{x,v}\left[V^{N,\pi^{N,*},\pi^{N,*}}(y, \mu_1^{N,y}) \,\Big|\, X_1^{N,i} = y\right].
\end{equation}

\begin{remark}[Comparison: MFG vs $N$-agent auxiliary functions]
	Comparing \eqref{eq:auxiliary-mf-W} and \eqref{eq:auxiliary-nagent-W} reveals the fundamental difference. In the MFG, $W^{\bP, \pi^*}(x,v,y) = V^{\pi^*,\pi^*}(y, \mu_1^{\bP, \pi^*}(v))$ is a \emph{deterministic} function, because $\mu_1^{\bP, \pi^*}(v)$ is fully determined by $\bP$, $\pi^*$, and $v$. In contrast, for the $N$-agent game, $W^{N,\bP, \pi^{N,*}}(x,v,y) = \mathbb{E}[V^{N,\pi^{N,*},\pi^{N,*}}(y, \mu_1^{N,y})]$ involves an \emph{expectation} over the conditional random empirical distribution $\mu_1^{N,y}$. This distinction is precisely what makes the convergence analysis nontrivial: we must show that as $N \to \infty$, the random distribution $\mu_1^{N,y}$ concentrates around the deterministic limit $\mu_1^{\bP, \pi^*}(v)$, enabling $W^{N,\bP, \pi^{N,*}} \to W^{\bP, \pi^*}$.
\end{remark}

\begin{remark}[Structure preservation]
	Despite the difference in randomness, notice that the one-shot formulas \eqref{eq:one-shot-mf-expanded} and \eqref{eq:one-shot-nagent-expanded} have \emph{identical structure}
	$$V = \inf_{\bP \in \Pf(v)} \sum_{y \in \Dc} \left[\text{Immediate Reward} + \rho \cdot \text{Transition Probability} \cdot W(x,v,y)\right].$$
	The only difference lies in the auxiliary functions $W$ vs $W^N$. This structural similarity is crucial for the convergence proofs in Subsections \ref{subsec:proof-approximation} and \ref{subsec:proof-convergence}, as it allows us to focus on establishing
	$$W^{N,\bP, \pi^N}(x,v,y) \to W^{\bP, \pi^*}(x,v,y) \quad \text{(uniformly) as } N \to \infty.$$
\end{remark}

\subsection{Technical Lemmas for Main Results}

We establish several technical lemmas that are essential for proving the main convergence and existence results in Subsection \ref{sub:main}. Throughout this subsection, we work with a sequence $(\pi^N)_{N \in \mathbb{N}} \subset \Pi_N$ and $\pi^\infty \in \Pi_c$ satisfying the uniform convergence condition
\begin{equation}
	\label{eq:strategy-convergence}
	\lim_{N \to \infty} \sup_{(x,v) \in \Delta_N} d_{W_1}(\pi^N(x,v), \pi^\infty(x,v)) = 0.
\end{equation}

Additionally, for the pointwise convergence results, we utilize the fact that 
any distribution $v \in \Pc(\Dc)$ can be approximated by empirical distributions. 
Specifically, for any $(x,v) \in \Delta$ and $N \geq 2$, we construct 
$v(N,x) \in \Pc(\Dc)$ such that $(x, v(N,x)) \in \Delta_N$ and
\begin{equation}
	\label{eq:v-N-x-convergence}
	v(N,x) \to v \quad \text{as } N \to \infty.
\end{equation}
One explicit construction is as follows. Let $\tilde{v}_N \in \Pc(\Dc)$ be any 
distribution satisfying $(N-1)\tilde{v}_N \in \Nb^d$ (i.e., $\tilde{v}_N$ is an 
empirical distribution of $N-1$ agents) such that $\tilde{v}_N \to v$. Such 
sequences exist because empirical distributions are dense in $\Pc(\Dc)$. 
Then define
\[
v(N,x) := \frac{1}{N}\bigl(\delta_x + (N-1)\tilde{v}_N\bigr).
\]
By construction, $N \cdot v(N,x) = \delta_x + (N-1)\tilde{v}_N \in \Nb^d$ and 
$N \cdot v(N,x)[x] \geq 1 > 0$, so $(x, v(N,x)) \in \Delta_N$. Moreover, 
$v(N,x) \to v$ since $\tilde{v}_N \to v$.

\subsubsection{Regularity of the Value Functions}

\begin{lemma}[Continuity of the Mean-Field Value Function]
	\label{lem:value-continuity}
	Let $\pi^* \in \Pi_c$. Then, the value function of the MFG $V^{\pi^*,\pi^*}: \Delta \to \mathbb{R}$ is continuous.
\end{lemma}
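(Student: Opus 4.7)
The plan is to view $V^{\pi^*,\pi^*}$ as the unique fixed point of the Bellman operator arising from the DPP in \eqref{eq:V} and to show that this operator preserves continuity. Since the state space $\Dc$ is finite with the discrete metric, continuity on $\Delta$ reduces to continuity of $v \mapsto V^{\pi^*,\pi^*}(x,v)$ for each fixed $x \in \Dc$. Let $B(\Delta)$ be the space of bounded measurable functions on $\Delta$ equipped with the sup norm, and let $C_b(\Delta) \subset B(\Delta)$ be the closed subspace of bounded continuous functions. On $B(\Delta)$, define the operator
\begin{align*}
	(\mathcal{T} f)(x,v) := \inf_{\bP \in \Pf(v)} \, \int_U \sum_{y \in \Dc} p(x,u)[y] \, \Bigl[ r(x,u,v,y) + \rho \, f \bigl( y, \Phi(\bP, \pi^*(\cdot,v)) \cdot v \bigr) \Bigr] \, \pi^*(\dd u \mid x, v),
\end{align*}
where $\Phi$ is the matrix from \eqref{eq:Phi}. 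By exactly the argument used in the proof of Proposition~\ref{prop:V_same}, $\mathcal{T}$ is a $\rho$-contraction on $B(\Delta)$, and $V^{\pi^*,\pi^*}$ is its unique fixed point there.

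The substantive work is to show $\mathcal{T}(C_b(\Delta)) \subseteq C_b(\Delta)$; once that is done, the standard Banach fixed-point argument on the closed subspace $C_b(\Delta)$ produces a continuous fixed point, which must coincide with $V^{\pi^*,\pi^*}$ by uniqueness in $B(\Delta)$. Fix $f \in C_b(\Delta)$ and $x \in \Dc$. I would introduce the joint integrand
\begin{align*}
	\psi_f(\bP, v) := \int_U \sum_{y \in \Dc} p(x,u)[y] \, \Bigl[ r(x,u,v,y) + \rho \, f \bigl( y, \Phi(\bP, \pi^*(\cdot,v)) \cdot v \bigr) \Bigr] \, \pi^*(\dd u \mid x, v),
\end{align*}
defined on the product space $\bigl(\prod_{\Dc \times U} \Pc(\Dc), d_\infty\bigr) \times \Pc(\Dc)$, and verify that $\psi_f$ is jointly continuous. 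Then, since by Assumption~\ref{ass:uncertainty} the correspondence $v \mapsto \Pf(v)$ is continuous and compact-valued, Berge's maximum theorem will yield continuity of $v \mapsto \inf_{\bP \in \Pf(v)} \psi_f(\bP,v) = (\mathcal{T} f)(x,v)$.

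The main obstacle is establishing joint continuity of $\psi_f$, which I expect to carry out as follows. Take $(\bP^n, v^n) \to (\bP, v)$ with $\bP^n \in \Pf(v^n)$. Since $\pi^* \in \Pi_c$, the Wasserstein-continuity of $\pi^*(x, \cdot)$ for each $x$ gives $\pi^*(\cdot, v^n) \to \pi^*(\cdot, v)$ pointwise in $\Dc$, and Lemma~\ref{lem:transition-continuity} then yields $\Phi(\bP^n, \pi^*(\cdot, v^n)) \cdot v^n \to \Phi(\bP, \pi^*(\cdot, v)) \cdot v$, so the inner argument of $f$ converges and the continuity of $f$ handles the continuation term. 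For the reward term, I would combine the Lipschitz continuity of $r$ in $(u,v)$ (Assumption~\ref{ass:r}), the uniform convergence of $\bP^n$ to $\bP$ in $d_\infty$, the Lipschitz bound \eqref{eq:F_Lip} on $u \mapsto p(x,u)[y]$, and Kantorovich--Rubinstein duality to pass limits inside the $\pi^*$-integral---essentially the same three-term decomposition used in the proof of Lemma~\ref{lem:transition-continuity}. Boundedness of $\mathcal{T} f$ is immediate from the boundedness of $r$ and $f$. This completes the argument that $\mathcal{T}$ stabilizes $C_b(\Delta)$, and hence that $V^{\pi^*,\pi^*} \in C_b(\Delta)$.
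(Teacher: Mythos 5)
Your proposal is correct and follows essentially the same route as the paper's proof: both express $V^{\pi^*,\pi^*}$ as the unique fixed point of the Bellman operator, establish joint continuity of the integrand in $(v,\bP)$ via Lemma~\ref{lem:transition-continuity}, the Lipschitz properties of $r$ and the kernels, and Kantorovich--Rubinstein duality, then invoke Berge's maximum theorem together with Assumption~\ref{ass:uncertainty} and conclude by uniqueness of the fixed point on the closed subspace of continuous functions. The only cosmetic difference is that you phrase the final step as a Banach fixed-point argument on $C_b(\Delta)$, while the paper phrases it as invariance of $C(\Delta)$ under the operator plus uniqueness in $\mathcal{B}(\Delta)$; these are the same argument.
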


\begin{proof}
	We show that the Bellman operator $\mathcal{T}^{\pi^*}: \mathcal{B}(\Delta) \to \mathcal{B}(\Delta)$ defined by
	$$(\mathcal{T}^{\pi^*} W)(x,v) = \inf_{\bP \in \Pf(v)} \mathbb{E}^{\bP}\left[r(x,\pi^*,v,X_1) + \rho W(X_1, \mu_1^{\bP,\pi^*}(v))\right]$$
	preserves continuity, then apply Proposition \ref{prop:V_same} to conclude that the unique fixed point $V^{\pi^*,\pi^*}$ is continuous.

    \textbf{Step 1: Bellman operator structure.}
    For $W \in C( \Delta)$, continuity on the compact set $\Delta$ implies that $W$ is bounded and uniformly continuous. Let $\|W\|_\infty := \sup_{(y,\mu) \in \Delta} |W(y,\mu)| < \infty$. We can write:
    $$(\mathcal{T}^{\pi^*} W)(x,v) = \inf_{\bP \in \Pf(v)} F(x,v,\bP),$$
    where
    $$F(x,v,\bP) := \int_U \sum_{y \in \Dc} p(x,u)[y] \left[r(x,u,v,y) + \rho W(y, \Phi(\bP, \pi^*(v)) \cdot v)\right] \pi^*(\dd u|x,v).$$

	\textbf{Step 2: Constraint correspondence.}
	The correspondence $(x,v) \mapsto \Pf(v)$ is continuous with non-empty compact convex values by Assumption \ref{ass:uncertainty}. Note that the correspondence is independent of $x$, which simplifies the analysis.

    \textbf{Step 3: Joint continuity of $F$.}
    To apply Berge's maximum theorem, we verify that $F(x,v,\bP)$ is jointly continuous in $(x,v,\bP)$. Consider a sequence $(x^n, v^n, \bP^n) \to (x, v, \bP)$ with $\bP^n \in \Pf(v^n)$ and $\bP \in \Pf(v)$. Since $\Dc$ is discrete, we may assume $x^n = x$ for all $n$. We decompose:
    $$F(x,v^n,\bP^n) - F(x,v,\bP) = \big(F(x,v^n,\bP^n) - F(x,v^n,\bP)\big) + \big(F(x,v^n,\bP) - F(x,v,\bP)\big).$$

    \textit{Analysis of the first term.}
    Using the boundedness of $r$ and $W$, we have
    \begin{align*}
    |F(x,v^n,\bP^n) - F(x,v^n,\bP)| 
    &\leq (C_r + \rho \|W\|_\infty ) \sup_{u,y} |p^n(x,u)[y] - p(x,u)[y]| \\
    &\quad + \sup_y \left|W(y, \Phi(\bP^n, \pi^*(v^n)) \cdot v^n) - W(y, \Phi(\bP, \pi^*(v^n)) \cdot v^n)\right|.
    \end{align*}
    The first term vanishes by uniform convergence $\bP^n \to \bP$. For the second term, we use the triangle inequality:
    \begin{align*}
    \left\|\Phi(\bP^n, \pi^*(v^n)) - \Phi(\bP, \pi^*(v^n))\right\| 
    \leq \left\|\Phi(\bP^n, \pi^*(v^n)) - \Phi(\bP, \pi^*(v))\right\| + \left\|\Phi(\bP, \pi^*(v)) - \Phi(\bP, \pi^*(v^n))\right\|.
    \end{align*}
    Since $\pi^* \in \Pi_c$, we have $\pi^*(v^n) \to \pi^*(v)$ pointwise. By Lemma~\ref{lem:transition-continuity}, the first term vanishes using $(\bP^n, \pi^*(v^n)) \to (\bP, \pi^*(v))$, and the second term vanishes using $\pi^*(v^n) \to \pi^*(v)$ with $\bP$ fixed. The uniform continuity of $W$ then yields the convergence.

    \textit{Analysis of the second term.}
    Define
    $$h^n(u,y) := p(x,u)[y]\left[r(x,u,v^n,y) + \rho W(y, \Phi(\bP, \pi^*(v^n)) \cdot v^n)\right],$$
    $$h(u,y) := p(x,u)[y]\left[r(x,u,v,y) + \rho W(y, \Phi(\bP, \pi^*(v)) \cdot v)\right].$$
    Then
    \begin{align*}
    F(x,v^n,\bP) - F(x,v,\bP) &= \int_U \sum_y h^n(u,y) \, \pi^*(\dd u|x,v^n) - \int_U \sum_y h(u,y) \, \pi^*(\dd u|x,v) \\
    &= \int_U \sum_y \big(h^n(u,y) - h(u,y)\big) \, \pi^*(\dd u|x,v^n) \\
    &\quad + \int_U \sum_y h(u,y) \, \big(\pi^*(\dd u|x,v^n) - \pi^*(\dd u|x,v)\big).
    \end{align*}

    For the first integral, Lemma~\ref{lem:transition-continuity} gives $\Phi(\bP, \pi^*(v^n)) \to \Phi(\bP, \pi^*(v))$. Combined with continuity of $r$ and $W$, we obtain $\sup_{u,y} |h^n(u,y) - h(u,y)| \to 0$, hence the integral vanishes.

    For the second integral, define $g(u) := \sum_y h(u,y)$. By \eqref{eq:Pf} and Assumption~\ref{ass:r}, $g$ is Lipschitz in $u$ with some constant $L_g$ depending on $L$, $C_r$, $L_r$, and $\|W\|_\infty$. Since $\pi^* \in \Pi_c$, we have $d_{W_1}(\pi^*(x,v^n), \pi^*(x,v)) \to 0$. By the Kantorovich-Rubinstein duality,
    $$\left|\int_U g(u) \, \pi^*(\dd u|x,v^n) - \int_U g(u) \, \pi^*(\dd u|x,v)\right| \leq L_g \cdot d_{W_1}(\pi^*(x,v^n), \pi^*(x,v)) \to 0.$$
    
    This completes the proof of joint continuity.
   
	\textbf{Step 4: Application of Berge's theorem.}
	By Berge's maximum theorem, the function $(x,v) \mapsto \inf_{\bP \in \Pf(v)} F(x,v,\bP)$ is continuous. Therefore, $\mathcal{T}^{\pi^*}$ maps $C(\Delta)$ to $C(\Delta)$.
	
	By Proposition \ref{prop:V_same}, the value function $V^{\pi^*,\pi^*}$ is the unique bounded function satisfying the Bellman equation $V = \mathcal{T}^{\pi^*} V$. Because $\mathcal{T}^{\pi^*}$ maps the closed subspace $C(\Delta) \subset \mathcal{B}(\Delta)$ to itself, and the fixed point in $\mathcal{B}(\Delta)$ is unique, we conclude that $V^{\pi^*,\pi^*} \in C(\Delta)$ is continuous.
\end{proof}

\begin{remark}
	The proof crucially uses the fact that $\pi^* \in \Pi_c$, which ensures that $\pi^*(v^n) \to \pi^*(v)$ as measures in the Wasserstein distance. This continuity, combined with Lemma \ref{lem:transition-continuity}, allows us to track how the next-period population distribution $\Phi(\bP, \pi^*(v)) \cdot v$ varies with $(v,\bP)$, which is essential for establishing the joint continuity of $F$.
\end{remark}

The preceding lemma establishes continuity of the mean-field value function with respect to the state variables $(x,v)$ for a fixed strategy $\pi^*$. To prove the existence of $N$-agent equilibria, we require a different type of continuity: the value function $V^{N,\pi,\pi}$ must be continuous with respect to the strategy $\pi$ itself, uniformly over all state pairs $(x,v) \in \Delta_N$. This uniform continuity in strategy space is essential for applying  Kakutani's fixed-point theorem to establish existence.

\begin{lemma}[Continuity of the $N$-agent Value Function in Strategy]
    \label{lem:n-agent-value-continuity}
    For any $N \geq 2$, the value function of the $N$-agent game $V^{N,\pi,\pi}(x,v)$ is continuous in $\pi$ uniformly over $(x,v) \in \Delta_N$. More precisely, if $\pi^n \to \pi$ in $\Pi_N$ (i.e., $d_{W_1}(\pi^n(x,v), \pi(x,v)) \to 0$ for each $(x,v) \in \Delta_N$), then
    $$\|V^{N,\pi^n,\pi^n} - V^{N,\pi,\pi}\|_{\infty,N} \to 0,$$
    where $\|F\|_{\infty,N} := \max_{(x,v) \in \Delta_N} |F(x,v)|$ for any $F: \Delta_N \to \mathbb{R}$.
\end{lemma}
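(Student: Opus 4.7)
My plan is to exploit the contraction structure of the $N$-agent Bellman operator, combined with the finiteness of $\Delta_N$ and the regularity in the relaxed strategy afforded by Assumption~\ref{ass:r} and the Lipschitz condition \eqref{eq:F_Lip}. Concretely, I would first introduce the operator $\mathcal{T}^{N,\pi}: \mathcal{B}(\Delta_N) \to \mathcal{B}(\Delta_N)$ defined by
\begin{align*}
	(\mathcal{T}^{N,\pi} W)(x,v) = \inf_{\bP \in \Pf(v)} \mathbb{E}^{\bP}_{x,v}\left[ r\big(x, \pi(x,v), v, X_1^{N,i}\big) + \rho \, W\big(X_1^{N,i}, \mu_1^N\big) \right],
\end{align*}
where, under $\bP$ and the common strategy $\pi$, Agent $i$ transitions according to \eqref{eq:X_N} with $\pi_0^{N,i} = \pi(x,v)$, and every other agent $j$ transitions independently with $\pi_0^{N,j} = \pi(X_0^{N,j}, v)$. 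The same argument as in Proposition~\ref{prop:V_same} shows that $\mathcal{T}^{N,\pi}$ is a $\rho$-contraction in $\|\cdot\|_{\infty,N}$, and that $V^{N,\pi,\pi}$ is its unique fixed point.

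The second step is a standard fixed-point perturbation estimate. Writing $V := V^{N,\pi,\pi}$ and $V^n := V^{N,\pi^n,\pi^n}$, a triangle inequality combined with the contraction property yields
\begin{align*}
	\|V^n - V\|_{\infty,N} \le \|\mathcal{T}^{N,\pi^n} V^n - \mathcal{T}^{N,\pi^n} V\|_{\infty,N} + \|\mathcal{T}^{N,\pi^n} V - \mathcal{T}^{N,\pi} V\|_{\infty,N} \le \rho \|V^n - V\|_{\infty,N} + \Delta_n,
\end{align*}
where $\Delta_n := \|\mathcal{T}^{N,\pi^n} V - \mathcal{T}^{N,\pi} V\|_{\infty,N}$. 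Hence $\|V^n - V\|_{\infty,N} \le \Delta_n/(1-\rho)$, and it suffices to show $\Delta_n \to 0$.

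The remaining work, and the main technical content, is bounding $\Delta_n$ at each fixed $(x,v) \in \Delta_N$ uniformly in $\bP \in \Pf(v)$. Using the elementary bound $|\inf_\bP A(\bP) - \inf_\bP B(\bP)| \le \sup_\bP |A(\bP) - B(\bP)|$ and decomposing the integrand, I would control the difference in three pieces: (a) the \emph{reward} piece $\int_U r(x,u,v,y) p(x,u,v)[y] \, [\pi^n - \pi](\dd u|x,v)$, which is bounded by $(C_r + L_r)\cdot d_{W_1}(\pi^n(x,v),\pi(x,v))$ via the Kantorovich--Rubinstein duality, since $r$ and $p(x,\cdot,v)[y]$ are bounded Lipschitz in $u$; (b) the \emph{own-transition} piece involving the change in $\int_U p(x,u,v)[y] \pi(\dd u|x,v)$, again Lipschitz in $\pi(x,v)$ by \eqref{eq:F_Lip}; and (c) the \emph{empirical distribution} piece, where the conditional law of $\mu_1^{N,y}$ given $X_1^{N,i}=y$ is, for fixed $\bP$, a product of $N-1$ multinomials whose parameters $\int_U p(X_0^{N,j},u,v)[y'] \pi(\dd u|X_0^{N,j},v)$ are Lipschitz in $\pi(X_0^{N,j},v)$ with constant $L$. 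Since $\mu_1^{N,y}$ takes at most finitely many values (bounded by a function of $N$ and $d$), the total variation between the two product laws is bounded by a constant $C_N$ times $\max_j d_{W_1}(\pi^n(X_0^{N,j},v),\pi(X_0^{N,j},v))$, and the corresponding difference in $\mathbb{E}[W(y,\mu_1^{N,y})]$ is at most $C_N \|W\|_\infty \max_{(x',v') \in \Delta_N} d_{W_1}(\pi^n(x',v'),\pi(x',v'))$. All of these bounds are \emph{uniform} in $\bP \in \Pf(v)$ because the Lipschitz constant $L$ in \eqref{eq:F_Lip} is common to all $\bP \in \Pf(v)$.

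The main obstacle I anticipate is precisely piece (c): keeping track of the combinatorial dependence of the empirical distribution $\mu_1^N$ on \emph{all} $N-1$ other agents' strategies and ensuring the bound is uniform in $\bP$. The finiteness of $\Delta_N$ (so that ``pointwise'' and ``uniform'' convergence in $\Pi_N$ coincide) and the fact that $\mu_1^{N,y}$ is supported on a finite subset of $\mathcal{P}(\mathcal{D})$ are what make this piece tractable; together with (a) and (b), they give $\Delta_n \le K_N \max_{(x',v') \in \Delta_N} d_{W_1}(\pi^n(x',v'), \pi(x',v'))$ for a constant $K_N$ depending only on $N$, $L$, $L_r$, $C_r$, and $\|V\|_{\infty,N} \le C_r/(1-\rho)$. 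Since the right-hand side tends to zero by assumption, the conclusion follows.
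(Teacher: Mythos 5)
Your proposal is correct and follows essentially the same route as the paper: a fixed-point perturbation estimate reduces everything to bounding $\|\mathcal{T}^{N,\pi^n}V - \mathcal{T}^{N,\pi}V\|_{\infty,N}$, the strategy-integral terms are handled by Kantorovich--Rubinstein using the common Lipschitz constant $L$ from \eqref{eq:Pf}, and the dependence of $\mu_1^{N,y}$ on the other $N-1$ agents is controlled with a bound of order $(N-1)L\,\epsilon_n$. The only cosmetic difference is that you bound the discrepancy in $\mathbb{E}[W(y,\mu_1^{N,y})]$ via total variation of the product of the $N-1$ transition laws, whereas the paper constructs an explicit coupling and applies a union bound over agents---these are the same estimate in two guises.
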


\begin{proof}
    Because $\Delta_N$ is finite, pointwise convergence $\pi^n \to \pi$ implies
    $$\epsilon_n := \max_{(x,v) \in \Delta_N} d_{W_1}(\pi^n(x,v), \pi(x,v)) \to 0.$$
    
    \textbf{Step 1: Operator convergence.}
    \label{lem:operator-convergence-step1} 
    For any $F: \Delta_N \to \mathbb{R}$ with $\|F\|_{\infty,N} \leq M$, we show $\|\mathcal{T}^{N,\pi^n} F - \mathcal{T}^{N,\pi} F\|_{\infty,N} \to 0$.
    
    Fix $(x,v) \in \Delta_N$ and $\bP \in \Pf(v)$. For $\sigma \in \{\pi^n, \pi\}$, define
    $$G^\sigma(x,v,\bP) := \int_U \sum_{y \in \Dc} p(x,u)[y] \left[r(x,u,v,y) + \rho \, H^\sigma(y)\right] \sigma(\dd u|x,v),$$
    where $H^\sigma(y) := \mathbb{E}^\sigma[F(y, \mu_1^{N,y})]$ is the expected continuation value when Agent $i$ reaches state $y$, with the expectation taken over the transitions of the other $N-1$ agents under strategy $\sigma$. Then
    \begin{align*}
        |G^{\pi^n} - G^\pi| 
        &\leq \left|\int_U g^{\pi^n}(u) \, \pi^n(\dd u) - \int_U g^{\pi^n}(u) \, \pi(\dd u)\right| + \left|\int_U g^{\pi^n}(u) \, \pi(\dd u) - \int_U g^\pi(u) \, \pi(\dd u)\right|,
    \end{align*}
    where $g^\sigma(u) := \sum_{y} p(x,u)[y][r(x,u,v,y) + \rho H^\sigma(y)]$.
    
    \textit{First term.} By \eqref{eq:Pf} and Assumption \ref{ass:r}, $g^{\pi^n}$ is Lipschitz in $u$ with constant $L_g$ depending on $L$, $C_r$, $L_r$, and $M$. By the Kantorovich-Rubinstein duality,
    $$\left|\int_U g^{\pi^n}(u) \, \pi^n(\dd u|x,v) - \int_U g^{\pi^n}(u) \, \pi(\dd u|x,v)\right| \leq L_g \cdot d_{W_1}(\pi^n(x,v), \pi(x,v)) \leq L_g \epsilon_n.$$
    
    \textit{Second term.} We bound $|H^{\pi^n}(y) - H^\pi(y)|$ for each $y \in \Dc$. Under strategy $\sigma$, the empirical distribution of the other $N-1$ agents at time 1 is
    $$\bar{\mu}^\sigma := \frac{1}{N-1}\sum_{j \neq i} \delta_{X_1^{\sigma,j}},$$
    where $X_1^{\sigma,j}$ denotes Agent $j$'s state at time 1 under strategy $\sigma$. Then $\mu_1^{N,y}$ under strategy $\sigma$ equals $\frac{1}{N}\delta_y + \frac{N-1}{N}\bar{\mu}^\sigma$.
    
    We construct a coupling of $(X_1^{\pi^n,j}, X_1^{\pi,j})_{j \neq i}$: for each $j \neq i$ with initial state $x_j$ (determined by $v$), let $\gamma_j$ be an optimal coupling of $(\pi^n(x_j,v), \pi(x_j,v))$. Sample $(u_j^n, u_j) \sim \gamma_j$, then sample $(X_1^{\pi^n,j}, X_1^{\pi,j})$ from an optimal coupling of $(p(x_j,u_j^n), p(x_j,u_j))$. Under this joint coupling, we have
    \begin{align}
        \label{eq:coupling-transition}
        \mathbb{E}[d_\Dc(X_1^{\pi^n,j}, X_1^{\pi,j})] \leq L \cdot d_{W_1}(\pi^n(x_j,v), \pi(x_j,v)) \leq L\epsilon_n,
    \end{align}
    where the first inequality follows from the Lipschitz property in \eqref{eq:Pf}.
    
    Now, under this coupling, $(\bar{\mu}^{\pi^n}, \bar{\mu}^\pi)$ are jointly defined on the same probability space. Because $\Delta_N$ is finite, we can bound
    \begin{align*}
        |H^{\pi^n}(y) - H^\pi(y)| 
        &= \left|\mathbb{E}\left[F\left(y, \tfrac{1}{N}\delta_y + \tfrac{N-1}{N}\bar{\mu}^{\pi^n}\right)\right] - \mathbb{E}\left[F\left(y, \tfrac{1}{N}\delta_y + \tfrac{N-1}{N}\bar{\mu}^\pi\right)\right]\right| \\
        &\leq 2M \cdot \mathbb{P}(\bar{\mu}^{\pi^n} \neq \bar{\mu}^\pi) \\
        &\leq 2M \cdot \mathbb{P}\left(\exists j \neq i: X_1^{\pi^n,j} \neq X_1^{\pi,j}\right) \\
        &\leq 2M \sum_{j \neq i} \mathbb{P}(X_1^{\pi^n,j} \neq X_1^{\pi,j}).
    \end{align*}
    
    For each $j \neq i$, using \eqref{eq:coupling-transition} with the discrete metric $d_\Dc(x,y) = \mathbf{1}_{x \neq y}$,
    $$\mathbb{P}(X_1^{\pi^n,j} \neq X_1^{\pi,j}) = \mathbb{E}[d_\Dc(X_1^{\pi^n,j}, X_1^{\pi,j})] \leq L\epsilon_n.$$
    
    Hence $|H^{\pi^n}(y) - H^\pi(y)| \leq 2M(N-1)L\epsilon_n$, and the second term is bounded by $\rho \cdot 2M(N-1)L\epsilon_n$.
    
    Combining both terms, taking infimum over $\bP \in \Pf(v)$ and maximum over $(x,v)$ yields
    $$\|\mathcal{T}^{N,\pi^n} F - \mathcal{T}^{N,\pi} F\|_{\infty,N} \leq (L_g + 2\rho M(N-1)L)\epsilon_n \to 0.$$

    \textbf{Step 2: Fixed point convergence.}
    Let $V^n := V^{N,\pi^n,\pi^n}$ and $V := V^{N,\pi,\pi}$. By the boundedness of the 
    reward function (Assumption \ref{ass:r}), both value functions satisfy
    $$\|V^n\|_{\infty,N}, \|V\|_{\infty,N} \leq \frac{C_r}{1-\rho} =: M_V.$$
    Taking $M = M_V$ in Step 1, we obtain constants $L_g = L_g(L, C_r, L_r, M_V)$ and 
    $C_N := L_g + 2\rho M_V(N-1)L$ such that for any $F: \Delta_N \to \mathbb{R}$ with 
    $\|F\|_{\infty,N} \leq M_V$,
    $$\|\mathcal{T}^{N,\pi^n} F - \mathcal{T}^{N,\pi} F\|_{\infty,N} \leq C_N \epsilon_n.$$

    Analogous to Proposition \ref{prop:V_same}, $V^n$ and $V$ are the unique fixed points 
    of $\mathcal{T}^{N,\pi^n}$ and $\mathcal{T}^{N,\pi}$ respectively. Using the fixed 
    point equations and the contraction property:
    \begin{align*}
    \|V^n - V\|_{\infty,N} 
    &= \|\mathcal{T}^{N,\pi^n}(V^n) - \mathcal{T}^{N,\pi}(V)\|_{\infty,N} \\
    &\leq \|\mathcal{T}^{N,\pi^n}(V^n) - \mathcal{T}^{N,\pi^n}(V)\|_{\infty,N} + 
      \|\mathcal{T}^{N,\pi^n}(V) - \mathcal{T}^{N,\pi}(V)\|_{\infty,N} \\
    &\leq \rho \|V^n - V\|_{\infty,N} + C_N \epsilon_n,
    \end{align*}
    where the last inequality uses $\|V\|_{\infty,N} \leq M_V$. 
    Rearranging yields $\|V^n - V\|_{\infty,N} \leq \frac{C_N}{1-\rho} \epsilon_n \to 0$.
\end{proof}

\subsubsection{Concentration of Distributions}

The next two lemmas establish that empirical distributions concentrate around their mean-field limits as the population size grows. The first lemma provides uniform concentration across all finite $N$-agent games, which is essential for proving Theorem~\ref{thm:mf-limit} (approximation from MFG to $N$-agent). The second lemma establishes pointwise concentration along sequences of approximating distributions, which will be used in proving Theorem~\ref{thm:convergence-mf} (limits are MFG equilibria).

Because $\mathcal{P}(\Dc)$ is a finite-dimensional space, all norms on it are equivalent. For convenience, we work with the $\ell^1$ norm: for two distributions $v, w \in \mathcal{P}(\Dc)$,
$$|v - w| = \sum_{x \in \Dc} |v[x] - w[x]|.$$

\begin{lemma}[Uniform Concentration of Conditional Empirical Distributions]
	\label{lem:distribution-convergence}
	For each fixed $y \in \Dc$, let $\mu_1^{\bP, \pi^\infty}(v) = \Phi(\bP, \pi^\infty(v)) \cdot v$ denote the mean-field limit distribution. Then for all $\varepsilon > 0$,
	$$\lim_{N \to \infty} \sup_{(x,v) \in \Delta_N} \sup_{\bP \in \Pf(v)} \sup_{y \in \Dc} \mathbb{P}\left(|\mu_1^{N,y} - \mu_1^{\bP, \pi^\infty}(v)| \geq \varepsilon\right) = 0,$$
	where $\mu_1^{N,y}$ is the conditional empirical distribution defined by \eqref{eq:mu1N-y-decomposition}.
\end{lemma}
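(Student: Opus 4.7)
The plan is to decompose the concentration into a bias term and a fluctuation term, and then control each uniformly over $(x,v) \in \Delta_N$, $\bP \in \Pf(v)$, and $y \in \Dc$. Concretely, write
\begin{align*}
\mathbb{P}\bigl(|\mu_1^{N,y} - \mu_1^{\bP,\pi^\infty}(v)| \geq \varepsilon\bigr)
\leq \mathbb{P}\bigl(|\mu_1^{N,y} - \mathbb{E}^{\bP}[\mu_1^{N,y}]| \geq \varepsilon/2\bigr)
+ \mathbf{1}_{\{|\mathbb{E}^{\bP}[\mu_1^{N,y}] - \mu_1^{\bP,\pi^\infty}(v)| \geq \varepsilon/2\}},
\end{align*}
where the expectation is taken over the transitions of the $N-1$ agents other than $i$ under strategy $\pi^N$ and kernel $\bP$, conditioned on $X_1^{N,i} = y$. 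Since the other agents' transitions are independent of Agent $i$'s transition given $\bP$, conditioning on $\{X_1^{N,i} = y\}$ does not alter their joint law.

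For the bias term, I would enumerate the initial states of the $N$ agents as $(x_j)_{j=1}^N$, noting that $v[x'] = \frac{1}{N}\#\{j : x_j = x'\}$ because $(x,v) \in \Delta_N$. This lets me write both distributions as averages: for every $z \in \Dc$,
\begin{align*}
\mathbb{E}^{\bP}\bigl[\mu_1^{N,y}[z]\bigr] = \tfrac{1}{N}\mathbf{1}_{y=z} + \tfrac{1}{N}\sum_{j\neq i} f^{\bP,\pi^N(x_j,v)}(x_j,z),
\qquad
\mu_1^{\bP,\pi^\infty}(v)[z] = \tfrac{1}{N}\sum_{j=1}^{N} f^{\bP,\pi^\infty(x_j,v)}(x_j,z).
\end{align*}
Subtracting, the $j=i$ term contributes $O(1/N)$, while each $j\neq i$ summand is controlled by the Lipschitz property of $p(x,\cdot)[z]$ (from \eqref{eq:Pf}) combined with the Kantorovich–Rubinstein duality and the hypothesis \eqref{eq:strategy-convergence}:
\begin{align*}
\bigl|f^{\bP,\pi^N(x_j,v)}(x_j,z) - f^{\bP,\pi^\infty(x_j,v)}(x_j,z)\bigr| \leq L \cdot d_{W_1}\bigl(\pi^N(x_j,v), \pi^\infty(x_j,v)\bigr) \leq L\varepsilon_N,
\end{align*}
with $\varepsilon_N := \sup_{(x,v)\in\Delta_N} d_{W_1}(\pi^N(x,v), \pi^\infty(x,v)) \to 0$. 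Summing over $z$ gives $|\mathbb{E}^{\bP}[\mu_1^{N,y}] - \mu_1^{\bP,\pi^\infty}(v)| \leq 2d/N + d L \varepsilon_N$, uniformly in $(x,v,\bP,y)$; hence the indicator in the decomposition vanishes once $N$ is large.

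For the fluctuation term, note that conditional on $\bP$ and on $\{X_1^{N,i} = y\}$, the summands $\{\delta_{X_1^{N,j}}\}_{j\neq i}$ are independent and take values in a finite set. For each coordinate $z \in \Dc$, the quantity $N \mu_1^{N,y}[z] - \mathbf{1}_{y=z}$ is a sum of $N-1$ independent $\{0,1\}$-valued random variables, so Hoeffding's inequality yields
\begin{align*}
\mathbb{P}\bigl(|\mu_1^{N,y}[z] - \mathbb{E}^{\bP}[\mu_1^{N,y}[z]]| \geq \varepsilon/(2d)\bigr) \leq 2\exp\bigl(-(N-1)\varepsilon^2/(2d^2)\bigr),
\end{align*}
a bound that is free of $\bP$, $y$, and $(x,v)$. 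A union bound over $z \in \Dc$ then gives uniform decay of the fluctuation probability.

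The main technical point will be bookkeeping rather than any single hard step: I must keep the bias and fluctuation bounds independent of $(x,v,\bP,y)$, which requires exploiting the empirical structure $(x,v) \in \Delta_N$ to represent $\mu_1^{\bP,\pi^\infty}(v)$ as an average over the $N$ agent-level transitions, and using only $L$ (from \eqref{eq:Pf}) together with $\varepsilon_N$ (from the standing hypothesis) in the Lipschitz step—neither of which depends on $\bP$ or the particular realization of the configuration beyond $v$ itself. Combining the two uniform bounds yields the claim.
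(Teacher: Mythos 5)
Your proof is correct and follows essentially the same route as the paper's: you split the error into a deterministic bias (controlled by the $O(1/N)$ self-exclusion terms plus the Lipschitz property of $p(x,\cdot)[z]$, Kantorovich--Rubinstein duality, and the uniform strategy convergence $\varepsilon_N \to 0$) and a fluctuation term handled by Hoeffding's inequality with a union bound over the $d$ states. The only cosmetic difference is that the paper writes the bias as three separate terms involving $\bar\mu$ and the transition matrix $\Phi$, whereas you merge them into a single agent-level average; the estimates are identical.
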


\begin{proof}
    Conditioning on $(X_0^{N,i} = x, \mu_0^N = v, X_1^{N,i} = y)$, the other $N-1$ agents have initial distribution $(Nv - \delta_x)/(N-1)$ and transition independently. Define $\bar{\mu} := \frac{1}{N-1}\sum_{j \neq i} \delta_{X_1^{N,j}}$. We decompose:
    \begin{align*}
        \mu_1^{N,y} - \mu_1^{\bP, \pi^\infty}(v) 
        &= \frac{1}{N}\delta_y + \frac{N-1}{N}\bar{\mu} - v \cdot \Phi(\bP, \pi^\infty(v)) \\
        &= \underbrace{\frac{1}{N}\delta_y}_{\text{(1)}} 
        + \frac{N-1}{N}\underbrace{(\bar{\mu} - \mathbb{E}[\bar{\mu}])}_{\text{(2)}} 
        + \underbrace{\frac{N-1}{N}\mathbb{E}[\bar{\mu}] - v \cdot \Phi(\bP, \pi^N(v))}_{\text{(3)}} \\
        &\quad + \underbrace{v \cdot (\Phi(\bP, \pi^N(v)) - \Phi(\bP, \pi^\infty(v)))}_{\text{(4)}}.
    \end{align*}
    
    \textbf{Term (1):} $|\frac{1}{N}\delta_y| = \frac{1}{N} \to 0$ uniformly.
    
    \textbf{Term (3):} From $\mathbb{E}[\bar{\mu}] = \frac{Nv - \delta_x}{N-1} \cdot \Phi(\bP, \pi^N(v))$, we have
    $$\frac{N-1}{N}\mathbb{E}[\bar{\mu}] - v \cdot \Phi(\bP, \pi^N(v)) = -\frac{1}{N}\delta_x \cdot \Phi(\bP, \pi^N(v)),$$
    whose norm is $\frac{1}{N}$, which vanishes uniformly as $N\rightarrow \infty$ (since rows of $\Phi$ have $\ell^1$ norm 1).
    
    \textbf{Term (4):} For each entry $(y',z) \in \Dc \times \Dc$, the map $u \mapsto p(y',u)[z]$ is $L$-Lipschitz by \eqref{eq:Pf}. By the Kantorovich-Rubinstein duality,
    $$|\Phi(\bP, \pi^N(v))[y',z] - \Phi(\bP, \pi^\infty(v))[y',z]| \leq L \cdot d_{W_1}(\pi^N(y',v), \pi^\infty(y',v)).$$
    Hence
    $$|v \cdot (\Phi(\bP, \pi^N(v)) - \Phi(\bP, \pi^\infty(v)))| \leq Ld \cdot \sup_{(x',v') \in \Delta_N} d_{W_1}(\pi^N(x',v'), \pi^\infty(x',v')) \to 0$$
    uniformly by \eqref{eq:strategy-convergence}.
    
    \textbf{Term (2):} 
    The conditioning on $X_1^{N,i} = y$ fixes Agent $i$'s transition but does not affect the transitions of other agents, which remain conditionally independent given their initial states. For each $z \in \Dc$, Hoeffding's inequality gives
    $$\mathbb{P}\left(\left|\frac{1}{N-1}\sum_{j \neq i} \mathbf{1}_{X_1^{N,j} = z} - \mathbb{E}[\frac{1}{N-1}\sum_{j \neq i} \mathbf{1}_{X_1^{N,j} = z}]\right| \geq \frac{\delta}{d}\right) \leq 2\exp\left(-\frac{2(N-1)\delta^2}{d^2}\right).$$
    By union bound over $d$ states, $\mathbb{P}(|\bar{\mu} - \mathbb{E}[\bar{\mu}]| \geq \delta) \leq 2d\exp(-2(N-1)\delta^2/d^2) \to 0$ uniformly.
    
    Combining all terms with the triangle inequality completes the proof.
\end{proof}

The preceding lemma establishes uniform concentration across all states in $\Delta_N$. For the proof of Theorem~\ref{thm:convergence-mf}, we also require a pointwise version along sequences of approximating distributions. The following lemma shows that empirical distributions concentrate around the mean-field limit along the approximating sequences $v(N,x)$ defined by \eqref{eq:v-N-x-convergence}.

\begin{lemma}[Pointwise Concentration of Empirical Distributions]
	\label{lem:distribution-convergence-pointwise}
	For each fixed $y \in \Dc$ and $(x,v) \in \Delta$, if $\bP_N \to \bP$ with $\bP_N \in \Pf(v(N,x))$ 
	and $\bP \in \Pf(v)$, then 
	$$\mu_1^{N,y} \xrightarrow{\mathbb{P}} \mu_1^{\bP,\pi^\infty}(v),$$
	where $\mu_1^{N,y}$ is evaluated at the state $(x, v(N,x))$ under kernel $\bP_N$ and strategy $\pi^N$.
\end{lemma}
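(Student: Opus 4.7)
The plan is to follow the same decomposition strategy as in Lemma~\ref{lem:distribution-convergence}, adapted to this pointwise setting where we fix $(x,v) \in \Delta$, $y \in \Dc$, and work along the approximating sequence $v(N,x) \to v$. Conditional on $X_0^{N,i}=x$, $\mu_0^N = v(N,x)$, $X_1^{N,i}=y$, the other $N-1$ agents have initial empirical distribution $(Nv(N,x) - \delta_x)/(N-1)$ and transition independently according to $\bP_N$ and $\pi^N$. Setting $\bar\mu := \frac{1}{N-1}\sum_{j\neq i}\delta_{X_1^{N,j}}$, I would decompose
\begin{align*}
\mu_1^{N,y} - \mu_1^{\bP,\pi^\infty}(v)
&= \tfrac{1}{N}\delta_y + \tfrac{N-1}{N}\bigl(\bar\mu - \mathbb{E}[\bar\mu]\bigr) + \Bigl(\tfrac{N-1}{N}\mathbb{E}[\bar\mu] - v\cdot \Phi(\bP,\pi^\infty(v))\Bigr).
\end{align*}
The first term has $\ell^1$ norm $1/N \to 0$. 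The fluctuation term $\bar\mu - \mathbb{E}[\bar\mu]$ converges to $0$ in probability by Hoeffding's inequality applied coordinatewise (as in the proof of Lemma~\ref{lem:distribution-convergence}), since conditioning on $X_1^{N,i}=y$ does not destroy the conditional independence of the other agents' transitions.

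For the deterministic bias term, I would use $\mathbb{E}[\bar\mu] = \frac{Nv(N,x)-\delta_x}{N-1}\cdot \Phi(\bP_N, \pi^N(v(N,x)))$ to write
\begin{align*}
\tfrac{N-1}{N}\mathbb{E}[\bar\mu] - v\cdot \Phi(\bP,\pi^\infty(v)) = v(N,x)\cdot \Phi(\bP_N,\pi^N(v(N,x))) - v\cdot \Phi(\bP,\pi^\infty(v)) - \tfrac{1}{N}\delta_x\cdot \Phi(\bP_N,\pi^N(v(N,x))),
\end{align*}
where the final piece is $O(1/N)$ since rows of $\Phi$ sum to one. It then remains to show
\begin{align*}
v(N,x)\cdot \Phi(\bP_N,\pi^N(v(N,x))) \to v\cdot \Phi(\bP,\pi^\infty(v)).
\end{align*}
Since $v(N,x) \to v$ by \eqref{eq:v-N-x-convergence}, it suffices to prove the entrywise convergence $\Phi(\bP_N,\pi^N(v(N,x))) \to \Phi(\bP,\pi^\infty(v))$, which I would obtain from Lemma~\ref{lem:transition-continuity}: the hypothesis $\bP_N \to \bP$ gives uniform convergence of the kernels, and I only need pointwise convergence $\pi^N(x', v(N,x)) \to \pi^\infty(x', v)$ in $d_{W_1}$ for each $x' \in \Dc$.

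This last convergence is where the continuity assumption $\pi^\infty \in \Pi_c$ becomes essential, and I view it as the main subtlety of the proof. I would split
\begin{align*}
d_{W_1}\bigl(\pi^N(x', v(N,x)), \pi^\infty(x', v)\bigr) \le \sup_{(x'',v'') \in \Delta_N} d_{W_1}\bigl(\pi^N(x'', v''), \pi^\infty(x'', v'')\bigr) + d_{W_1}\bigl(\pi^\infty(x', v(N,x)), \pi^\infty(x', v)\bigr),
\end{align*}
where the first term vanishes by the uniform convergence \eqref{eq:strategy-convergence} (noting $(x',v(N,x)) \in \Delta_N$ by construction) and the second by the continuity of $v'' \mapsto \pi^\infty(x', v'')$ combined with $v(N,x)\to v$. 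Combining the control of all three terms in the decomposition via the triangle inequality then yields $\mu_1^{N,y} \xrightarrow{\mathbb{P}} \mu_1^{\bP,\pi^\infty}(v)$, as required.
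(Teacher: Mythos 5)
Your proposal is correct and follows essentially the same route as the paper's proof: the identical decomposition into the $\tfrac{1}{N}\delta_y$ term, the Hoeffding-controlled fluctuation $\bar\mu - \Eb[\bar\mu]$, the $O(1/N)$ bias from $\tfrac{1}{N}\delta_x\cdot\Phi$, and the deterministic term $v(N,x)\cdot\Phi(\bP_N,\pi^N(v(N,x))) - v\cdot\Phi(\bP,\pi^\infty(v))$ handled via $v(N,x)\to v$ together with the triangle-inequality split of $d_{W_1}(\pi^N(\cdot,v(N,x)),\pi^\infty(\cdot,v))$ into the uniform strategy convergence \eqref{eq:strategy-convergence} and the continuity of $\pi^\infty\in\Pi_c$. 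The only cosmetic difference is that you route the convergence of $\Phi$ through Lemma~\ref{lem:transition-continuity} rather than bounding it directly by the Lipschitz property and Kantorovich--Rubinstein duality, which is the same argument.
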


\begin{proof}
    The proof follows the same decomposition as Lemma \ref{lem:distribution-convergence}. Define $\bar{\mu} := \frac{1}{N-1}\sum_{j \neq i} \delta_{X_1^{N,j}}$. We decompose:
    \begin{align*}
        \mu_1^{N,y} - \mu_1^{\bP, \pi^\infty}(v) 
        &= \underbrace{\frac{1}{N}\delta_y}_{\text{(1)}} 
        + \frac{N-1}{N}\underbrace{(\bar{\mu} - \mathbb{E}[\bar{\mu}])}_{\text{(2)}} 
        + \underbrace{\frac{N-1}{N}\mathbb{E}[\bar{\mu}] - v(N,x) \cdot \Phi(\bP_N, \pi^N(v(N,x)))}_{\text{(3)}} \\
        &\quad + \underbrace{v(N,x) \cdot \Phi(\bP_N, \pi^N(v(N,x))) - v \cdot \Phi(\bP, \pi^\infty(v))}_{\text{(4)}}.
    \end{align*}
    
    \textbf{Terms (1)--(3):} Following the analysis in Lemma \ref{lem:distribution-convergence}, Term (1) equals $\frac{1}{N} \to 0$, Term (3) equals $\frac{1}{N} \to 0$, and Term (2) vanishes in probability by Hoeffding's inequality.
    
    \textbf{Term (4):} We bound
    \begin{align*}
        &|v(N,x) \cdot \Phi(\bP_N, \pi^N(v(N,x))) - v \cdot \Phi(\bP, \pi^\infty(v))| \\
        &\leq |v(N,x) - v| \cdot \|\Phi(\bP_N, \pi^N(v(N,x)))\|_1 
        + \|\Phi(\bP_N, \pi^N(v(N,x))) - \Phi(\bP, \pi^\infty(v))\|_1,
    \end{align*}
    where $\|\cdot\|_1$ denotes the matrix $\ell^1$ norm. Because each row of $\Phi$ sums to 1, we have $$\|\Phi(\bP_N, \pi^N(v(N,x)))\|_1 \leq d.$$ The first term vanishes by \eqref{eq:v-N-x-convergence}. 
    For the second term, by the triangle inequality:
    \begin{align*}
    &d_{W_1}(\pi^N(y', v(N,x)), \pi^\infty(y', v)) \\
    & \leq d_{W_1}(\pi^N(y', v(N,x)), \pi^\infty(y', v(N,x))) + d_{W_1}(\pi^\infty(y', v(N,x)), \pi^\infty(y', v)).
    \end{align*}
    The first term vanishes by \eqref{eq:strategy-convergence}, and the second by the continuity of $\pi^\infty \in \Pi_c$ and $v(N,x) \to v$.
    
    Combining all terms establishes convergence in probability.
\end{proof}

\subsubsection{Convergence Results of the Value Functions and Auxiliary Functions}

Having established the concentration of empirical distributions, we now prove the convergence of the value functions and auxiliary functions. Recall from Subsection \ref{subsec:pre} that the auxiliary functions are defined in terms of value functions: the mean-field auxiliary function is $W^{\bP, \pi^*}(x,v,y) := V^{\pi^*,\pi^*}(y, \mu_1^{\bP, \pi^*}(v))$, and the $N$-agent auxiliary function is $W^{N,\bP, \pi^N}(x,v,y) := \mathbb{E}^{\bP}_{x,v}[V^{N,\pi^N,\pi^N}(y, \mu_1^{N,y})]$. Because the auxiliary functions are expressed in terms of value functions, we establish their convergence simultaneously through a coupled contraction argument.

\begin{lemma}[Convergence of the Value Functions and Auxiliary Functions]
	\label{lem:value-auxiliary-uniform}
    The following convergence results hold
	\begin{align}
		&\lim_{N \to \infty} \sup_{(x,v) \in \Delta_N} |V^{N,\pi^N,\pi^N}(x,v) - V^{\pi^\infty,\pi^\infty}(x,v)| = 0, \label{eq:value-convergence} \\
		&\lim_{N \to \infty} \sup_{\substack{(x,v) \in \Delta_N \\ y \in \Dc, \bP \in \Pf(v)}} |W^{N,\bP,\pi^N}(x,v,y) - W^{\bP,\pi^\infty}(x,v,y)| = 0. \label{eq:auxiliary-convergence}
	\end{align}
\end{lemma}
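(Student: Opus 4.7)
The plan is to establish the two convergences simultaneously through a coupled contraction argument, exploiting the self-referential structure: the value functions are fixed points of Bellman operators whose continuation values are precisely the auxiliary functions, while the auxiliary functions are in turn expectations of value functions. Define the uniform errors
\begin{align*}
\epsilon_N^V &:= \sup_{(x,v) \in \Delta_N} \left| V^{N,\pi^N,\pi^N}(x,v) - V^{\pi^\infty,\pi^\infty}(x,v) \right|, \\
\epsilon_N^W &:= \sup_{(x,v) \in \Delta_N, \, y \in \Dc, \, \bP \in \Pf(v)} \left| W^{N,\bP,\pi^N}(x,v,y) - W^{\bP,\pi^\infty}(x,v,y) \right|.
\end{align*}
The goal is to prove the pair of inequalities $\epsilon_N^V \leq \rho\, \epsilon_N^W + o(1)$ and $\epsilon_N^W \leq \epsilon_N^V + o(1)$, after which combining them yields $(1-\rho)\epsilon_N^V \leq o(1)$, forcing $\epsilon_N^V \to 0$ and consequently $\epsilon_N^W \to 0$.

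For the bound on $\epsilon_N^V$, I would apply the DPP in \eqref{eq:V} to both $V^{N,\pi^N,\pi^N}$ and $V^{\pi^\infty,\pi^\infty}$, written in the expanded form \eqref{eq:one-shot-mf-expanded}/\eqref{eq:one-shot-nagent-expanded} with $\pi^{i} = \pi^{N}$ and $\pi^\infty$ respectively. Using the standard inequality $|\inf_\bP F_1(\bP) - \inf_\bP F_2(\bP)| \leq \sup_\bP |F_1(\bP) - F_2(\bP)|$, I split the integrand difference into a reward part, a strategy-swap part in the continuation term, and an auxiliary-function-swap part. The first two are controlled by $d_{W_1}(\pi^N(x,v), \pi^\infty(x,v))$ via Kantorovich--Rubinstein duality (using Assumption \ref{ass:r}, the Lipschitz structure of $\bP$ from \eqref{eq:Pf}, and the boundedness of $V^{\pi^\infty,\pi^\infty}$ established in Lemma \ref{lem:value-continuity}); by \eqref{eq:strategy-convergence}, both vanish uniformly. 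The third is bounded by $\rho\, \epsilon_N^W$ directly.

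For the bound on $\epsilon_N^W$, I decompose
\begin{align*}
W^{N,\bP,\pi^N}(x,v,y) - W^{\bP,\pi^\infty}(x,v,y) &= \mathbb{E}^{\bP}_{x,v}\!\left[V^{N,\pi^N,\pi^N}(y,\mu_1^{N,y}) - V^{\pi^\infty,\pi^\infty}(y,\mu_1^{N,y})\right] \\
&\quad + \mathbb{E}^{\bP}_{x,v}\!\left[V^{\pi^\infty,\pi^\infty}(y,\mu_1^{N,y}) - V^{\pi^\infty,\pi^\infty}(y,\mu_1^{\bP,\pi^\infty}(v))\right].
\end{align*}
Because $\mu_1^{N,y}$ is an empirical distribution of $N$ agents (with Agent $i$ fixed at state $y$), the pair $(y,\mu_1^{N,y})$ lies in $\Delta_N$, so the first expectation is bounded pointwise (and hence in expectation) by $\epsilon_N^V$. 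For the second, Lemma \ref{lem:value-continuity} gives continuity of $V^{\pi^\infty,\pi^\infty}$ on the compact set $\Delta$, hence uniform continuity; combining this with the uniform concentration estimate in Lemma \ref{lem:distribution-convergence}, the standard ``$\epsilon$--$\delta$ plus tail bound'' argument yields a uniform vanishing error (split the expectation according to the event $\{|\mu_1^{N,y} - \mu_1^{\bP,\pi^\infty}(v)| < \delta\}$, using the uniform bound $\|V^{\pi^\infty,\pi^\infty}\|_\infty \leq C_r/(1-\rho)$ on the complement).

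The main obstacle will be enforcing uniformity simultaneously across $(x,v) \in \Delta_N$, $y \in \Dc$, and $\bP \in \Pf(v)$ in the second decomposition term, since $\Pf(v)$ varies with $v$ and the conditional law of $\mu_1^{N,y}$ depends on $\bP$ in a potentially complex way. This is precisely why Lemma \ref{lem:distribution-convergence} was stated with a supremum over all three arguments; assuming that uniform concentration, the remainder of the argument reduces to routine bookkeeping. After establishing the coupled inequalities, the final contraction $\epsilon_N^V \leq \rho\,\epsilon_N^V + o(1)$ closes the proof of \eqref{eq:value-convergence}, and feeding this back gives \eqref{eq:auxiliary-convergence}.
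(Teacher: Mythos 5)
Your proposal is correct and follows essentially the same route as the paper: the same coupled error quantities, the same Bellman-equation/inf-sup decomposition bounding $\epsilon_N^V$ by $\rho\,\epsilon_N^W$ plus strategy-convergence terms, the same two-term split of the auxiliary-function error using $(y,\mu_1^{N,y})\in\Delta_N$ together with uniform continuity of $V^{\pi^\infty,\pi^\infty}$ and the uniform concentration of Lemma~\ref{lem:distribution-convergence}, and the same final contraction step. The only cosmetic difference is that the paper keeps the modulus-of-continuity term $\omega(\eta)$ explicit and orders the choice of $\eta$ before $N_0$ at the end, which is exactly the bookkeeping you describe informally.
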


\begin{proof}
	Define 
	\begin{align*}
		E_N^V &:= \sup_{(x,v) \in \Delta_N} |V^{N,\pi^N,\pi^N}(x,v) - V^{\pi^\infty,\pi^\infty}(x,v)|, \\
		E_N^W &:= \sup_{\substack{(x,v) \in \Delta_N \\ y \in \Dc, \bP \in \Pf(v)}} |W^{N,\bP,\pi^N}(x,v,y) - W^{\bP,\pi^\infty}(x,v,y)|.
	\end{align*}
	We will establish that both $E_N^V$ and $E_N^W$ converge to 0 through a coupled analysis.
    
\textbf{Step 1: Relating $E_N^V$ to $E_N^W$.}
	Based on the Bellman equations, for any $(x,v) \in \Delta_N$:
	\begin{align*}
		&V^{\pi^\infty,\pi^\infty}(x,v) = \inf_{\bP \in \Pf(v)} \sum_{y \in \Dc} \int_U p(x,u)[y] \left[r(x,u,v,y) + \rho W^{\bP,\pi^\infty}(x,v,y)\right] \pi^\infty(\dd u|x,v), \\
		&V^{N,\pi^N,\pi^N}(x,v) = \inf_{\bP \in \Pf(v)} \sum_{y \in \Dc} \int_U p(x,u)[y] \left[r(x,u,v,y) + \rho W^{N,\bP,\pi^N}(x,v,y)\right] \pi^N(\dd u|x,v).
	\end{align*}
 Because both expressions involve infima over the same set $\Pf(v)$, we have
	\begin{align*}
		&|V^{N,\pi^N,\pi^N}(x,v) - V^{\pi^\infty,\pi^\infty}(x,v)| \\
		&\leq \sup_{\bP \in \Pf(v)} \Bigg|\sum_{y \in \Dc} \int_U p(x,u)[y] \left[r(x,u,v,y) + \rho W^{N,\bP,\pi^N}(x,v,y)\right] \pi^N(\dd u|x,v) \\
		&\qquad\qquad - \sum_{y \in \Dc} \int_U p(x,u)[y] \left[r(x,u,v,y) + \rho W^{\bP,\pi^\infty}(x,v,y)\right] \pi^\infty(\dd u|x,v)\Bigg|.
	\end{align*}
As such,
	\begin{align*}
		\text{RHS} &\leq \sup_{\bP \in \Pf(v)} \Bigg[\left|\sum_{y} \int_U p(x,u)[y] r(x,u,v,y) \left(\pi^N(\dd u|x,v) - \pi^\infty(\dd u|x,v)\right)\right| \\
		&\qquad + \rho \left|\sum_{y} \int_U p(x,u)[y] W^{N,\bP,\pi^N}(x,v,y) \pi^N(\dd u|x,v) - \sum_{y} \int_U p(x,u)[y] W^{\bP,\pi^\infty}(x,v,y) \pi^\infty(\dd u|x,v)\right|\Bigg].
	\end{align*}
	
    For the reward term, define $g_r(u) := \sum_y p(x,u)[y] r(x,u,v,y)$. By \eqref{eq:Pf} and Assumption \ref{ass:r}, $g_r$ is Lipschitz in $u$ with constant $L_g$ depending on $L$, $C_r$, and $L_r$. By the Kantorovich-Rubinstein duality,
    $$\left|\int_U g_r(u) \, \pi^N(\dd u|x,v) - \int_U g_r(u) \, \pi^\infty(\dd u|x,v)\right| \leq L_g \cdot d_{W_1}(\pi^N(x,v), \pi^\infty(x,v)).$$
	
	For the continuation term, using the triangle inequality,
	\begin{align*}
		&\left|\sum_{y} \int_U p(x,u)[y] W^{N,\bP,\pi^N}(x,v,y) \pi^N(\dd u|x,v) - \sum_{y} \int_U p(x,u)[y] W^{\bP,\pi^\infty}(x,v,y) \pi^\infty(\dd u|x,v)\right| \\
		&\leq \sum_{y} \int_U p(x,u)[y] |W^{N,\bP,\pi^N}(x,v,y) - W^{\bP,\pi^\infty}(x,v,y)| \pi^N(\dd u|x,v) \\
		&\quad + \left|\sum_{y} \int_U p(x,u)[y] W^{\bP,\pi^\infty}(x,v,y) \left(\pi^N(\dd u|x,v) - \pi^\infty(\dd u|x,v)\right)\right| \\
		&\leq E_N^W + L_W \cdot d_{W_1}(\pi^N(x,v), \pi^\infty(x,v)),
	\end{align*}
    where $L_W$ is the Lipschitz constant of $g_W(u) := \sum_y p(x,u)[y] W^{\bP,\pi^\infty}(x,v,y)$ with respect to $u$. Because $W^{\bP,\pi^\infty}(x,v,y)$ does not depend on $u$ and $|W^{\bP,\pi^\infty}| \leq M := \|V^{\pi^\infty,\pi^\infty}\|_\infty$, by \eqref{eq:Pf} we have $L_W \leq LM$.
	
    Taking supremum over $(x,v) \in \Delta_N$, we have
    \begin{equation}
        \label{eq:EV-bound}
        E_N^V \leq (L_g + \rho LM) \sup_{(x,v) \in \Delta_N} d_{W_1}(\pi^N(x,v), \pi^\infty(x,v)) + \rho E_N^W.
    \end{equation}
    
	\textbf{Step 2: Relating $E_N^W$ to $E_N^V$.}
	For any $(x,v) \in \Delta_N$, $y \in \Dc$, and $\bP \in \Pf(v)$:
	\begin{align*}
		&|W^{N,\bP,\pi^N}(x,v,y) - W^{\bP,\pi^\infty}(x,v,y)| \\
		&= \left|\mathbb{E}^{\bP}_{x,v}\left[V^{N,\pi^N,\pi^N}(y, \mu_1^{N,y})\right] - V^{\pi^\infty,\pi^\infty}(y, \mu_1^{\bP, \pi^\infty}(v))\right| \\
		&\leq \mathbb{E}^{\bP}_{x,v}\left[|V^{N,\pi^N,\pi^N}(y, \mu_1^{N,y}) - V^{\pi^\infty,\pi^\infty}(y, \mu_1^{N,y})|\right] \\
		&\quad + \mathbb{E}^{\bP}_{x,v}\left[|V^{\pi^\infty,\pi^\infty}(y, \mu_1^{N,y}) - V^{\pi^\infty,\pi^\infty}(y, \mu_1^{\bP, \pi^\infty}(v))|\right].
	\end{align*}
	
	For the first term, because $\mu_1^{N,y} \in \Delta_N$, we have
	$$\mathbb{E}^{\bP}_{x,v}\left[|V^{N,\pi^N,\pi^N}(y, \mu_1^{N,y}) - V^{\pi^\infty,\pi^\infty}(y, \mu_1^{N,y})|\right] \leq E_N^V.$$
	
	For the second term, let $\omega(\cdot)$ be the modulus of uniform continuity of $V^{\pi^\infty,\pi^\infty}$ (which exists by Lemma \ref{lem:value-continuity}). For any $\eta > 0$, define $A_\eta := \{|\mu_1^{N,y} - \mu_1^{\bP, \pi^\infty}(v)| \leq \eta\}$. Then
	\begin{align*}
\mathbb{E}^{\bP}_{x,v}\left[|V^{\pi^\infty,\pi^\infty}(y, \mu_1^{N,y}) - V^{\pi^\infty,\pi^\infty}(y, \mu_1^{\bP, \pi^\infty}(v))|\right] \leq \omega(\eta) + 2M \cdot \mathbb{P}(A_\eta^c).
	\end{align*}
	
	By Lemma \ref{lem:distribution-convergence}, for any fixed $\eta > 0$, we have
	$$\sup_{(x,v) \in \Delta_N} \sup_{\bP \in \Pf(v)} \sup_{y \in \Dc} \mathbb{P}(|\mu_1^{N,y} - \mu_1^{\bP, \pi^\infty}(v)| \geq \eta) \to 0 , \quad \text{as } N \to \infty.$$
	
	Thus
	\begin{equation}
		\label{eq:EW-bound}
		E_N^W \leq E_N^V + \omega(\eta) + o(1),
	\end{equation}
	where $o(1) \to 0$ as $N \to \infty$.
    
	\textbf{Step 3: Convergence via contraction.}
	Substituting \eqref{eq:EW-bound} into \eqref{eq:EV-bound} yields 
	\begin{align*}
        E_N^V \leq (L_g + \rho LM) \sup_{(x,v) \in \Delta_N} d_{W_1}(\pi^N(x,v), \pi^\infty(x,v)) + \rho[E_N^V + \omega(\eta) + o(1)] .
    \end{align*}
	
	Rearranging, we have
    $$(1 - \rho) E_N^V \leq (L_g + \rho LM) \sup_{(x,v) \in \Delta_N} d_{W_1}(\pi^N(x,v), \pi^\infty(x,v)) + \rho\omega(\eta) + o(1).$$

    For any $\varepsilon > 0$, first choose $\eta > 0$ such that $\omega(\eta) < (1-\rho)\varepsilon/2$. Then, by \eqref{eq:strategy-convergence}, choose $N_0$ such that for all $N \geq N_0$,
    $$(L_g + \rho LM) \sup_{(x,v) \in \Delta_N} d_{W_1}(\pi^N(x,v), \pi^\infty(x,v)) + o(1) < (1-\rho)^2\varepsilon/2.$$
    It follows that for $N \geq N_0$,
    $$(1-\rho)E_N^V < (1-\rho)^2\varepsilon/2 + \rho(1-\rho)\varepsilon/2 = (1-\rho)\varepsilon/2 < (1-\rho)\varepsilon,$$
    hence $E_N^V < \varepsilon$. Substituting into \eqref{eq:EW-bound} gives
    $$E_N^W \leq E_N^V + \omega(\eta) + o(1) < \varepsilon + (1-\rho)\varepsilon/2 + o(1) < 2\varepsilon$$
    for $N$ sufficiently large. Since $\varepsilon > 0$ is arbitrary, this establishes both \eqref{eq:value-convergence} and \eqref{eq:auxiliary-convergence}.
\end{proof}

For the proof of Theorem~\ref{thm:convergence-mf}, we also require a pointwise version of the convergence results on the auxiliary functions along approximating sequences.

\begin{lemma}[Pointwise Convergence of the Auxiliary Functions]
	\label{lem:auxiliary-convergence-pointwise}
	For each fixed $y \in \Dc$ and $(x,v) \in \Delta$, if $\bP_N \to \bP$ with $\bP_N \in \Pf(v(N,x))$ and $\bP \in \Pf(v)$, then:
	$$W^{N,\bP_N,\pi^N}(x,v(N,x),y) \to W^{\bP,\pi^\infty}(x,v,y) \quad \text{as } N \to \infty.$$
\end{lemma}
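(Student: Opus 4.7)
The plan is to reduce the pointwise statement to the two ingredients proved immediately before: the uniform value-function convergence in Lemma~\ref{lem:value-auxiliary-uniform}, and the in-probability concentration of $\mu_1^{N,y}$ in Lemma~\ref{lem:distribution-convergence-pointwise}. To set this up, I would insert the intermediate quantity $\mathbb{E}^{\bP_N}_{x,v(N,x)}\!\left[V^{\pi^\infty,\pi^\infty}(y,\mu_1^{N,y})\,\big|\,X_1^{N,i}=y\right]$ and apply the triangle inequality to obtain
\begin{align*}
\left|W^{N,\bP_N,\pi^N}(x,v(N,x),y) - W^{\bP,\pi^\infty}(x,v,y)\right|
&\le \underbrace{\mathbb{E}^{\bP_N}_{x,v(N,x)}\!\left[\left|V^{N,\pi^N,\pi^N}(y,\mu_1^{N,y}) - V^{\pi^\infty,\pi^\infty}(y,\mu_1^{N,y})\right|\,\big|\,X_1^{N,i}=y\right]}_{=:(\mathrm{I})} \\
&\quad + \underbrace{\left|\mathbb{E}^{\bP_N}_{x,v(N,x)}\!\left[V^{\pi^\infty,\pi^\infty}(y,\mu_1^{N,y})\,\big|\,X_1^{N,i}=y\right] - V^{\pi^\infty,\pi^\infty}(y,\mu_1^{\bP,\pi^\infty}(v))\right|}_{=:(\mathrm{II})}.
\end{align*}

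For term $(\mathrm{I})$, the key observation is that $\mu_1^{N,y}$ is, by construction, the empirical distribution of $N$ agents with at least one agent (namely Agent~$i$) sitting in state $y$; hence $(y,\mu_1^{N,y})\in\Delta_N$ almost surely. I would therefore bound the integrand pointwise by $E_N^V = \sup_{(x',v')\in\Delta_N}|V^{N,\pi^N,\pi^N}(x',v')-V^{\pi^\infty,\pi^\infty}(x',v')|$, which tends to $0$ by Lemma~\ref{lem:value-auxiliary-uniform}. This takes care of the first piece without any distributional argument.

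For term $(\mathrm{II})$, I would invoke the continuity of $V^{\pi^\infty,\pi^\infty}$ on the compact set $\Delta$ (Lemma~\ref{lem:value-continuity}), which gives uniform continuity and boundedness by $\|V^{\pi^\infty,\pi^\infty}\|_\infty \le C_r/(1-\rho)$. Lemma~\ref{lem:distribution-convergence-pointwise} (applied under the present hypotheses $\bP_N\to\bP$ with $\bP_N\in\Pf(v(N,x))$) yields $\mu_1^{N,y}\xrightarrow{\mathbb{P}}\mu_1^{\bP,\pi^\infty}(v)$. The continuous mapping theorem then gives $V^{\pi^\infty,\pi^\infty}(y,\mu_1^{N,y})\xrightarrow{\mathbb{P}}V^{\pi^\infty,\pi^\infty}(y,\mu_1^{\bP,\pi^\infty}(v))$, and since the random variables are uniformly bounded, the bounded convergence theorem upgrades this to convergence of the conditional expectations; hence $(\mathrm{II})\to 0$.

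The main obstacle I anticipate is purely bookkeeping: making sure that the $(\mathrm{I})$-term really is controlled by the supremum over $\Delta_N$ (which requires verifying $(y,\mu_1^{N,y})\in\Delta_N$) and that the conditional expectation in Lemma~\ref{lem:distribution-convergence-pointwise} is compatible with the one appearing in the definition of $W^{N,\bP_N,\pi^N}$. Both are essentially definitional, so no genuinely new estimate is needed; the bulk of the analytical work has already been done in Lemmas~\ref{lem:value-continuity}, \ref{lem:distribution-convergence-pointwise}, and \ref{lem:value-auxiliary-uniform}.
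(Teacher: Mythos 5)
Your proposal is correct and follows essentially the same route as the paper's proof: the same triangle-inequality decomposition through the intermediate term $\mathbb{E}[V^{\pi^\infty,\pi^\infty}(y,\mu_1^{N,y})]$, the same bound of the first piece by the uniform error from Lemma~\ref{lem:value-auxiliary-uniform} using $(y,\mu_1^{N,y})\in\Delta_N$, and the same treatment of the second piece via Lemma~\ref{lem:distribution-convergence-pointwise}, continuity of $V^{\pi^\infty,\pi^\infty}$, the continuous mapping theorem, and uniform boundedness. The only cosmetic difference is that you place the absolute value outside the difference of expectations in term (II) rather than inside, which is an immaterial (Jensen-type) weakening.
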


\begin{proof}
    Using the triangle inequality yields
    \begin{align*}
        &|W^{N,\bP_N,\pi^N}(x,v(N,x),y) - W^{\bP,\pi^\infty}(x,v,y)| \\
        &\leq \mathbb{E}^{\bP_N}_{x,v(N,x)}\left[|V^{N,\pi^N,\pi^N}(y, \mu_1^{N,y}) - V^{\pi^\infty,\pi^\infty}(y, \mu_1^{N,y})|\right] \\
        &\quad + \mathbb{E}^{\bP_N}_{x,v(N,x)}\left[|V^{\pi^\infty,\pi^\infty}(y, \mu_1^{N,y}) - V^{\pi^\infty,\pi^\infty}(y, \mu_1^{\bP, \pi^\infty}(v))|\right]. 
    \end{align*}
    
    For the first term, because $\mu_1^{N,y} \in \Delta_N$, Lemma \ref{lem:value-auxiliary-uniform} gives
    $$\mathbb{E}^{\bP_N}_{x,v(N,x)}\left[|V^{N,\pi^N,\pi^N}(y, \mu_1^{N,y}) - V^{\pi^\infty,\pi^\infty}(y, \mu_1^{N,y})|\right] \leq \sup_{(x',v') \in \Delta_N} |V^{N,\pi^N,\pi^N}(x',v') - V^{\pi^\infty,\pi^\infty}(x',v')| \to 0.$$
    
    For the second term, Lemma \ref{lem:distribution-convergence-pointwise} gives $\mu_1^{N,y} \xrightarrow{\mathbb{P}} \mu_1^{\bP,\pi^\infty}(v)$. By continuity of $V^{\pi^\infty,\pi^\infty}$ (Lemma \ref{lem:value-continuity}) and the continuous mapping theorem,
    $$V^{\pi^\infty,\pi^\infty}(y, \mu_1^{N,y}) \xrightarrow{\mathbb{P}} V^{\pi^\infty,\pi^\infty}(y, \mu_1^{\bP,\pi^\infty}(v)).$$
    Because $|V^{\pi^\infty,\pi^\infty}| \leq M$, convergence in probability together with uniform boundedness implies $L^1$ convergence, hence the second term also vanishes.
\end{proof}

\subsubsection{Convergence of Parametric Infima}

The proof of Theorem \ref{thm:convergence-mf} requires a result on the convergence of infima over varying compact sets with varying objective functions.

\begin{lemma}[Convergence of Parametric Infima]
	\label{lem:parametric-infima}
	Let $(X,d)$ be a metric space. For each $N \geq 1$, let $K_N \subseteq X$ be compact, and let $K \subseteq X$ be compact. Let $f_N: K_N \to \mathbb{R}$ and $f: K \to \mathbb{R}$ be continuous. Suppose
	\begin{enumerate}
		\item \emph{(Upper hemicontinuity)} For every sequence $(x_N) \subseteq X$ with $x_N \in K_N$, there exists a convergent subsequence $x_{N_k} \to x$ with $x \in K$;
		\item \emph{(Lower hemicontinuity)} For every $x \in K$, there exists a sequence $(x_N) \subseteq X$ with $x_N \in K_N$ such that $x_N \to x$;
		\item \emph{(Pointwise convergence)} For every sequence $x_N \to x$ with $x_N \in K_N$ and $x \in K$, we have $f_N(x_N) \to f(x)$.
	\end{enumerate}
	Then,
	$$\lim_{N \to \infty} \inf_{x \in K_N} f_N(x) = \inf_{x \in K} f(x).$$
\end{lemma}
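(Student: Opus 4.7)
The plan is to establish the equality by proving the corresponding $\limsup$ and $\liminf$ inequalities separately, exploiting compactness of $K_N$ and $K$ to ensure the infima are attained (so ``$\inf$'' can be replaced with ``$\min$'' throughout).

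For the $\limsup$ inequality, I would use the lower hemicontinuity condition. Fix $\varepsilon > 0$ and choose $x^* \in K$ with $f(x^*) \leq \inf_{K} f + \varepsilon$ (such $x^*$ exists since $K$ is compact and $f$ is continuous). By condition (2), there is a sequence $x_N \in K_N$ with $x_N \to x^*$. By condition (3), $f_N(x_N) \to f(x^*)$, and since $\inf_{K_N} f_N \leq f_N(x_N)$, we get $\limsup_{N\to\infty} \inf_{K_N} f_N \leq f(x^*) \leq \inf_K f + \varepsilon$. Letting $\varepsilon \to 0$ yields $\limsup_{N\to\infty} \inf_{K_N} f_N \leq \inf_K f$.

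For the $\liminf$ inequality, I would argue by contradiction using the upper hemicontinuity condition. Let $x_N^* \in K_N$ attain the infimum of $f_N$ on $K_N$ (which exists by compactness and continuity). Suppose for contradiction that $\liminf_{N\to\infty} f_N(x_N^*) < \inf_K f$, and extract a subsequence $(N_k)$ along which $f_{N_k}(x_{N_k}^*)$ converges to this liminf value. By condition (1), we can further pass to a sub-subsequence, which I still denote $(x_{N_k}^*)$, converging to some $x^* \in K$. Then condition (3) gives $f_{N_k}(x_{N_k}^*) \to f(x^*)$, so $f(x^*) = \liminf_{N\to\infty} f_N(x_N^*) < \inf_K f$, contradicting $x^* \in K$. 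Hence $\liminf_{N\to\infty} \inf_{K_N} f_N \geq \inf_K f$.

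Combining the two inequalities gives the claim. I do not expect any real obstacle here: the three hypotheses are precisely tailored to make the two directions work, with (2) ensuring that every candidate minimizer in the limit can be approximated from the sequences $K_N$, and (1) ensuring that minimizers of $f_N$ accumulate inside $K$ rather than escaping. The only technical care needed is the double passage to subsequences in the $\liminf$ step, which is handled by the standard ``every subsequence has a further convergent subsequence'' argument.
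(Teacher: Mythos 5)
Your proposal is correct and follows essentially the same argument as the paper: the upper bound via lower hemicontinuity applied to a (near-)minimizer of $f$ on $K$, and the lower bound via extracting minimizers $x_N^*\in K_N$, passing to a convergent sub-subsequence landing in $K$, and invoking pointwise convergence. The only cosmetic differences are your use of an $\varepsilon$-minimizer (unnecessary since the infimum over $K$ is attained) and the contradiction framing of the $\liminf$ step, which the paper states directly.
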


\begin{proof}
	Let $v_N := \inf\limits_{x \in K_N} f_N(x)$ and $v := \inf\limits_{x \in K} f(x)$.
	
	\textbf{Lower bound: $\liminf\limits_{N \to \infty} v_N \geq v$.}
	Choosing a subsequence $(N_k)$  such that $v_{N_k} \to \liminf\limits_{N \to \infty} v_N$. For each $k$, because $K_{N_k}$ is compact and $f_{N_k}$ is continuous, there exists $x_{N_k} \in K_{N_k}$ such that $f_{N_k}(x_{N_k}) = v_{N_k}$. 
	
	Using Condition (i), the sequence $(x_{N_k})$ has a convergent subsequence $x_{N_{k_j}} \to x$ with $x \in K$.  Condition (iii) yields
	$$f(x) = \lim_{j \to \infty} f_{N_{k_j}}(x_{N_{k_j}}) = \lim_{j \to \infty} v_{N_{k_j}} = \liminf_{N \to \infty} v_N.$$
	
	Because $f(x) \geq v$, we have $\liminf\limits_{N \to \infty} v_N \geq v$.
	
	\textbf{Upper bound: $\limsup\limits_{N \to \infty} v_N \leq v$.}
	Because $K$ is compact and $f$ is continuous, the infimum $v$ is attained: there exists $x^* \in K$ such that $f(x^*) = v$. By condition (ii), there exists a sequence $x_N \in K_N$ such that $x_N \to x^*$. Using Condition (iii) yields
	$$\limsup_{N \to \infty} v_N \leq \limsup_{N \to \infty} f_N(x_N) = f(x^*) = v.$$
	
	Thus, combining both bounds, we have $\lim\limits_{N \to \infty} v_N = v$.
\end{proof}

\begin{remark}
	Conditions (i) and (ii) ensure that the sequence of sets $K_N$ converges to $K$ in an appropriate set-valued sense: every limit point of sequences from $K_N$ belongs to $K$, and every point in $K$ can be approximated by sequences from $K_N$. Lemma \ref{lem:parametric-infima} is related to classical results on epi-convergence in parametric optimization (cf.~\citet{rockafellar1998variational}, Chapter 7).
\end{remark}

\subsection{Proof of Theorem~\ref{thm:n-agent-existence}}

\begin{proof}
	We apply Kakutani's fixed-point theorem to the best-response correspondence.
	
	\textbf{Step 1: Strategy space.}
	Because $\Delta_N$ is finite, the $N$-agent strategy space is
	$$\Pi_N := \{\pi: \Delta_N \to \mathcal{P}(U)\} \cong \mathcal{P}(U)^{|\Delta_N|}.$$
	Under the product topology induced by the Wasserstein metric $d_{W_1}$ on each component, $\Pi_N$ is compact and convex.
	\vskip 5pt
    
	\textbf{Step 2: Best-response correspondence.}
	Define $B^N: \Pi_N \rightrightarrows \Pi_N$ by
	$$B^N(\pi) := \left\{\pi' \in \Pi_N : \pi'(x,v) \in \arg\max_{\sigma \in \mathcal{P}(U)} V^{N,\sigma \otimes_1 \pi,\pi}(x,v) \text{ for each } (x,v) \in \Delta_N\right\}.$$
	
	By Proposition \ref{prop:n-agent-one-shot}, a stationary equilibrium exists if and only if there exists $\pi^{N,*} \in B^N(\pi^{N,*})$.
	\vskip 5pt
    
	\textbf{Step 3: Joint continuity of the value function.}
    From the one-shot expansion \eqref{eq:one-shot-nagent-expanded},
    $$V^{N,\sigma \otimes_1 \pi,\pi}(x,v) = \inf_{\bP \in \Pf(v)} G_\bP(\sigma, \pi),$$
    where for each $\bP \in \Pf(v)$,
    $$G_\bP(\sigma, \pi) := \sum_{y \in \Dc} \int_U p(x,u)[y] \left[r(x,u,v,y) + \rho W^{N,\bP,\pi}(x,v,y)\right] \sigma(\dd u).$$
    We establish joint continuity of $V^{N,\sigma \otimes_1 \pi,\pi}(x,v)$ in $(\sigma,\pi)$ by showing that $G_\bP(\sigma,\pi)$ is jointly continuous uniformly over $\bP \in \Pf(v)$. Consider sequences $(\sigma^n, \pi^n) \to (\sigma, \pi)$ in $\mathcal{P}(U) \times \Pi_N$.
    
\textit{Continuity of $W^{N,\bP,\pi}$ in $\pi$.}
From \eqref{eq:auxiliary-nagent-W}, $W^{N,\bP,\pi}(x,v,y) = \mathbb{E}^{\bP}_{x,v}[V^{N,\pi,\pi}(y, \mu_1^{N,y})]$. Using the triangle inequality,
\begin{align*}
    |W^{N,\bP,\pi^n}(x,v,y) - W^{N,\bP,\pi}(x,v,y)| 
    &\leq \mathbb{E}[|V^{N,\pi^n,\pi^n}(y, \mu_1^{N,y}|_{\pi^n}) - V^{N,\pi,\pi}(y, \mu_1^{N,y}|_{\pi^n})|] \\
    &\quad + \mathbb{E}[|V^{N,\pi,\pi}(y, \mu_1^{N,y}|_{\pi^n}) - V^{N,\pi,\pi}(y, \mu_1^{N,y}|_{\pi})|],
\end{align*}
where $\mu_1^{N,y}|_{\sigma}$ denotes the conditional empirical distribution under strategy $\sigma$.

The first term is bounded by $\|V^{N,\pi^n,\pi^n} - V^{N,\pi,\pi}\|_{\infty,N} \to 0$ by Lemma \ref{lem:n-agent-value-continuity}. The second term has the same structure as $|H^{\pi^n}(y) - H^\pi(y)|$ in \hyperref[lem:operator-convergence-step1]{Step 1} of that proof with $F = V^{N,\pi,\pi}$; by the same coupling argument, it is bounded by $2M_V(N-1)L\epsilon_n \to 0$. Both bounds are uniform over $\bP \in \Pf(v)$ and $y \in \Dc$. 
\vskip 5pt	

    \textit{Continuity of $G_\bP(\sigma,\pi)$.}
    Define $g(u) := \sum_y p(x,u)[y][r(x,u,v,y) + \rho W^{N,\bP,\pi^n}(x,v,y)]$. Because $|W^{N,\bP,\pi^n}| \leq M_V := \frac{C_r}{1-\rho}$ (the uniform bound on value functions), by \eqref{eq:Pf} and Assumption \ref{ass:r}, $g$ is Lipschitz in $u$ with constant $L_g$ depending on $L$, $C_r$, $L_r$, and $M_V$. Then
    \begin{align*}
    |G_\bP(\sigma^n, \pi^n) - G_\bP(\sigma, \pi)| 
    &\leq \left|\int_U g(u) \, \sigma^n(\dd u) - \int_U g(u) \, \sigma(\dd u)\right| \\
    &\quad + \left|\int_U \sum_y p(x,u)[y] \rho [W^{N,\bP,\pi^n}(x,v,y) - W^{N,\bP,\pi}(x,v,y)] \, \sigma(\dd u)\right|.
    \end{align*}
    
    The first term is bounded by $L_g \cdot d_{W_1}(\sigma^n, \sigma)$ by the Kantorovich-Rubinstein duality. The second term is bounded by $\rho \max_y |W^{N,\bP,\pi^n}(x,v,y) - W^{N,\bP,\pi}(x,v,y)| \to 0$ uniformly over $\bP$ by the continuity of $W$ in $\pi$ established above.
    
    Thus $G_\bP(\sigma^n, \pi^n) \to G_\bP(\sigma, \pi)$ uniformly over $\bP \in \Pf(v)$.
    \vskip 5pt	
    
    \textit{Continuity of the value function.}
    Because $G_\bP(\sigma^n, \pi^n) \to G_\bP(\sigma, \pi)$ uniformly over the compact set $\Pf(v)$,
    $$V^{N,\sigma^n \otimes_1 \pi^n,\pi^n}(x,v) = \inf_{\bP \in \Pf(v)} G_\bP(\sigma^n, \pi^n) \to \inf_{\bP \in \Pf(v)} G_\bP(\sigma, \pi) = V^{N,\sigma \otimes_1 \pi,\pi}(x,v).$$
    \vskip 5pt	
    
    \textbf{Step 4: Properties of the best-response correspondence.}
    Applying Berge's maximum theorem to the continuous function $V^{N,\cdot \otimes_1 \pi,\pi}(x,v)$ over the compact set $\mathcal{P}(U)$, the best-response correspondence $B^N(\pi)$ has non-empty compact values and is upper hemicontinuous.
    
    To verify convexity, note that for each $\bP \in \Pf(v)$, the map $\sigma \mapsto G_\bP(\sigma, \pi)$ is linear in $\sigma$. Because $V^{N,\sigma \otimes_1 \pi,\pi}(x,v)$ is an infimum of linear functions, it is concave in $\sigma$. Therefore, the set of maximizers is convex, and $B^N(\pi)$ is convex-valued.
    \vskip 5pt	
    
	\textbf{Step 5: Fixed point.}
    By Kakutani's fixed-point theorem, there exists $\pi^{N,*} \in \Pi_N$ such that $\pi^{N,*} \in B^N(\pi^{N,*})$, i.e., for all $(x,v) \in \Delta_N$,
    $$\pi^{N,*}(x,v) \in \arg\max_{\sigma \in \mathcal{P}(U)} V^{N,\sigma \otimes_1 \pi^{N,*},\pi^{N,*}}(x,v).$$
    By Proposition \ref{prop:n-agent-one-shot}, $\pi^{N,*}$ is a stationary equilibrium.
\end{proof}

\subsection{Proof of Theorem~\ref{thm:mf-limit}}
\label{subsec:proof-approximation}

\begin{proof}
The proof proceeds in three steps: first, we show that 
the replicating strategy sequence satisfies the uniform convergence condition \eqref{eq:strategy-convergence}; second, we apply Lemma~\ref{lem:value-auxiliary-uniform} to establish that auxiliary 
functions are close for large $N$; finally, we combine these results with 
the equilibrium property of $\pi^*$ to verify the $\varepsilon$-Nash condition.

\textbf{Step 1: Uniform convergence of strategies.}
Because $\pi^{N,*} = \pi^*$ for all $N$, we have $d_{W_1}(\pi^{N,*}(x,v), \pi^*(x,v)) = 0$ for all $(x,v) \in \Delta_N$. Thus, the replicating strategy trivially satisfies the uniform convergence condition~\eqref{eq:strategy-convergence}.

\textbf{Step 2: Convergence of one-shot deviation values.}
By Lemma~\ref{lem:value-auxiliary-uniform}, for any $\delta > 0$, there exists $N_0$ such that for all $N \geq N_0$,
\begin{equation}
\label{eq:sup-W-bound}   
\sup_{(x,v) \in \Delta_N} \sup_{y \in \Dc} \sup_{\bP \in \Pf(v)} 
|W^{N,\bP,\pi^{N,*}}(x,v,y) - W^{\bP,\pi^{*}}(x,v,y)| \leq \delta. 
\end{equation}

For any $\pi^i \in \Pi$ and $(x,v) \in \Delta_N$, define for each $\bP \in \Pf(v)$:
\begin{align*}
F(\bP) &:= \sum_{y \in \Dc} \int_U p(x,u)[y] \left[r(x,u,v,y) + \rho W^{\bP,\pi^{*}}(x,v,y)\right] \pi^i(\dd u), \\
F^N(\bP) &:= \sum_{y \in \Dc} \int_U p(x,u)[y] \left[r(x,u,v,y) + \rho W^{N,\bP,\pi^{N,*}}(x,v,y)\right] \pi^i(\dd u).
\end{align*}
Then $V^{\pi^{i} \otimes_1 \pi^{*},\pi^{*}}(x, v) = \inf_{\bP \in \Pf(v)} F(\bP)$ and $V^{N,\pi^{i} \otimes_1 \pi^{N,*},\pi^{N,*}}(x, v) = \inf_{\bP \in \Pf(v)} F^N(\bP)$.

Using \eqref{eq:sup-W-bound} and $\sum_y \int_U p(x,u)[y] \pi^i(\dd u) = 1$, we have
$$|F(\bP) - F^N(\bP)| \leq \rho \delta \quad \forall \bP \in \Pf(v).$$
Because this bound is uniform over $\bP$, we obtain
$$|V^{\pi^{i} \otimes_1 \pi^{*},\pi^{*}}(x, v) - V^{N,\pi^{i} \otimes_1 \pi^{N,*},\pi^{N,*}}(x, v)| = |\inf_\bP F(\bP) - \inf_\bP F^N(\bP)| \leq \rho\delta.$$

\textbf{Step 3: Verification of $\varepsilon$-Nash equilibrium.}
Because $\pi^*$ is a mean-field equilibrium, by Proposition~\ref{prop:one_shot_eps},
$$V^{\pi^{i} \otimes_1 \pi^{*},\pi^{*}}(x, v) \leq V^{\pi^{*},\pi^{*}}(x, v) \quad \forall \pi^i \in \Pi, \, \forall (x,v) \in \Delta.$$

For any deviation $\pi^{N,i} \in \Pi_N$ in the $N$-agent game, we can naturally extend it to some $\pi^i \in \Pi$ (for instance, by defining arbitrary values on $\Delta \setminus \Delta_N$). Because the MFG value function $V^{\pi^i \otimes_1 \pi^*, \pi^*}(x,v)$
only depends on the values of $\pi^i$ at points in $\Delta_N$ when $(x,v) \in \Delta_N$, Step 2 applies with this extension. Combining yields
\begin{align*}
V^{N,\pi^{N,i} \otimes_1 \pi^{N,*},\pi^{N,*}}(x, v) 
&\leq V^{\pi^{N,i} \otimes_1 \pi^{*},\pi^{*}}(x, v) + \rho\delta \\
&\leq V^{\pi^{*},\pi^{*}}(x, v) + \rho\delta \\
&\leq V^{N,\pi^{N,*},\pi^{N,*}}(x, v) + 2\rho\delta.
\end{align*}

Choosing $\delta = \frac{1-\rho}{4\rho}\varepsilon$, we obtain that for all $N \geq N_0$,
$$V^{N,\pi^{N,i} \otimes_1 \pi^{N,*},\pi^{N,*}}(x, v) \leq V^{N,\pi^{N,*},\pi^{N,*}}(x, v) + \frac{1-\rho}{2}\varepsilon \quad \forall (x,v) \in \Delta_N.$$
By Proposition~\ref{prop:n-agent-one-shot}, $\pi^{N,*} = \pi^*$ is an $\varepsilon$-Nash equilibrium.
\end{proof}

\subsection{Proof of Theorem~\ref{thm:convergence-mf}}
\label{subsec:proof-convergence}

\begin{proof}
    We show that $\pi^\infty$ satisfies the one-shot optimality condition for the MFG, which by Proposition \ref{prop:one_shot_eps} establishes that $\pi^\infty$ is a mean-field equilibrium.
    
    \textbf{Step 1: Setting up the approximation.}
    Fix $(x,v) \in \Delta$ and $\pi^i \in \Pi$. For each $N \geq 2$, consider the approximating sequence $v(N,x) \in \Delta_N$ satisfying \eqref{eq:v-N-x-convergence}. Because $\pi^N$ is an $N$-agent equilibrium, by Proposition \ref{prop:n-agent-one-shot}, the one-shot optimality condition holds:
    \begin{equation}
        \label{eq:n-agent-oneshot-convergence}
        V^{N,\pi^i \otimes_1 \pi^N,\pi^N}(x,v(N,x)) \leq V^{N,\pi^N,\pi^N}(x,v(N,x))
    \end{equation}
    Here we use the fact that $\pi^i|_{\Delta_N} \in \Pi_N$, and the $N$-agent one-shot optimality holds for all deviations in $\Pi_N$.
    
    \textbf{Step 2: Convergence of one-shot values.}
    From the one-shot expansions \eqref{eq:one-shot-mf-expanded} and \eqref{eq:one-shot-nagent-expanded}, define
    \begin{align*}
        F(\bP) &:= \sum_{y \in \Dc} \int_U p(x,u)[y] \left[r(x,u,v,y) + \rho W^{\bP,\pi^\infty}(x,v,y)\right] \pi^i(\dd u), \\
        F^N(\bP_N) &:= \sum_{y \in \Dc} \int_U p(x,u)[y] \left[r(x,u,v(N,x),y) + \rho W^{N,\bP_N,\pi^N}(x,v(N,x),y)\right] \pi^i(\dd u).
    \end{align*}
    Then $V^{\pi^i \otimes_1 \pi^\infty,\pi^\infty}(x,v) = \inf_{\bP \in \Pf(v)} F(\bP)$ and $V^{N,\pi^i \otimes_1 \pi^N,\pi^N}(x,v(N,x)) = \inf_{\bP_N \in \Pf(v(N,x))} F^N(\bP_N)$.

    We apply Lemma \ref{lem:parametric-infima} to establish convergence of the infima. By Assumption \ref{ass:uncertainty}, the correspondence $\Pf(\cdot)$ is continuous (both upper and lower hemicontinuous), so conditions (i) and (ii) follow from $v(N,x) \to v$. For condition (iii), consider any convergent sequence $\bP_N \to \bP$ with $\bP_N \in \Pf(v(N,x))$ and $\bP \in \Pf(v)$. The reward term converges by Assumption \ref{ass:r} and $v(N,x) \to v$. The continuation term converges by Lemma \ref{lem:auxiliary-convergence-pointwise}, which gives $W^{N,\bP_N,\pi^N}(x,v(N,x),y) \to W^{\bP,\pi^\infty}(x,v,y)$ for each $y \in \Dc$. Therefore $F^N(\bP_N) \to F(\bP)$, and Lemma \ref{lem:parametric-infima} yields
    $$\lim_{N \to \infty} V^{N,\pi^i \otimes_1 \pi^N,\pi^N}(x,v(N,x)) = V^{\pi^i \otimes_1 \pi^\infty,\pi^\infty}(x,v).$$
    The same argument with $\pi^i = \pi^N$ (noting that $\pi^N \to \pi^\infty$) gives
    $$\lim_{N \to \infty} V^{N,\pi^N,\pi^N}(x,v(N,x)) = V^{\pi^\infty,\pi^\infty}(x,v).$$
    
    \textbf{Step 3: Preserving the inequality.}
    Taking limits in \eqref{eq:n-agent-oneshot-convergence} yields
    $$V^{\pi^i \otimes_1 \pi^\infty,\pi^\infty}(x,v) \leq V^{\pi^\infty,\pi^\infty}(x,v).$$
    Because this holds for all $(x,v) \in \Delta$ and all $\pi^i \in \Pi$, by Proposition \ref{prop:one_shot_eps}, $\pi^\infty$ is a mean-field equilibrium.
\end{proof}

\section{Conclusion}
\label{sec:conclusion}

This paper studies discrete-time, finite-state MFGs under model uncertainty. We propose a framework to address the fundamental challenge that arises when agents lack precise knowledge of the state transition probabilities---namely, that the population distribution evolves stochastically rather than deterministically. We extend the strategy space to allow controls to depend on both the individual state and population distribution, so that agents can react adaptively to the stochastic evolution of the population. We provide a rigorous dynamic programming characterization for the agents' robust utility functions, where adversarial nature selects the worst-case transition probabilities from an uncertainty set that may depend on the state-distribution pair. This formulation captures the economic intuition that different environments may yield different worst-case scenarios. 

Our main theoretical results establish the precise asymptotic relationship between $N$-agent games and their mean-field limits under model uncertainty. Theorem~\ref{thm:mf-limit} shows that mean-field equilibria are $\varepsilon$-Nash equilibria for $N$-agent games (for large $N$), justifying the MFG framework as a practical approximation tool for large population games. Theorem~\ref{thm:convergence-mf} proves the converse result: equilibria of $N$-agent games converge to those of MFGs as $N$ increases. Finally, Theorem~\ref{thm:n-agent-existence} guarantees the existence of stationary equilibria for $N$-agent games, which, combined with the convergence results, provides an indirect pathway to establishing the existence of mean-field equilibria.

One important open question concerns the direct existence of mean-field equilibria. As discussed in Subsection~\ref{sub:main}, the strategy space $\Pi_c$ lacks compactness, and the best-response correspondence may not admit a continuous selection, preventing standard fixed-point arguments. Whether existence can be established under additional structural assumptions, or whether counterexamples exist showing nonexistence, remain interesting directions for future research.

\paragraph{Acknowledgements.} Zongxia Liang is supported by the National Natural Science Foundation of China (no.12271290). The authors thank the members of the group of Mathematical Finance and Actuarial Science at the Department of Mathematical Sciences, Tsinghua University, for their helpful feedback and conversations.

\bibliographystyle{abbrvnat}
\bibliography{ref}

@article{bayraktar2024time,
  title={On Time-Inconsistency in Mean-Field Games},
  author={Bayraktar, Erhan and Wang, Zhenhua},
  journal={Mathematical Finance},
  volume={35},
  number={3},
  pages={613--635},
  year={2024}
}

@article{biswas2015mean,
  title={Mean field games with ergodic cost for discrete time {Markov} processes},
  author={Biswas, Anup},
  journal={arXiv preprint arXiv:1510.08968},
  year={2015}
}

@article{gao2023distributionally,
  title={Distributionally robust stochastic optimization with {W}asserstein distance},
  author={Gao, Rui and Kleywegt, Anton J},
  journal={Mathematics of Operations Research},
  volume={48},
  number={2},
  pages={603--655},
  year={2023}
}

@article{gilboa1989maxmin,
  title={Maxmin expected utility with non-unique prior},
  author={Gilboa, Itzhak and Schmeidler, David},
  journal={Journal of Mathematical Economics},
  volume={18},
  number={2},
  pages={141--153},
  year={1989},
  publisher={Elsevier}
}

@article{gomes2010discrete,
  title={Discrete time, finite state space mean field games},
  author={Gomes, Diogo A and Mohr, Joana and Souza, Rafael Rigao},
  journal={Journal de Math{\'e}matiques Pures et Appliqu{\'e}es},
  volume={93},
  number={3},
  pages={308--328},
  year={2010},
  publisher={Elsevier}
}

@article{hansen2001robust,
  title={Robust control and model uncertainty},
  author={Hansen, Lars Peter and Sargent, Thomas J},
  journal={American Economic Review},
  volume={91},
  number={2},
  pages={60--66},
  year={2001},
  publisher={American Economic Association}
}

@book{hansen2008,
  author    = {Lars Peter Hansen and Thomas J. Sargent},
  title     = {Robustness},
  year      = {2008},
  publisher = {Princeton University Press},
  address   = {Princeton, NJ},
}

@article{hofer2025markov,
  title={Markov perfect equilibria in discrete finite-player and mean-field games},
  author={H{\"o}fer, Felix and Soner, H Mete and Y{\i}lmaz, Atilla},
  journal={arXiv preprint arXiv:2507.04540},
  year={2025}
}

@article{huang2006large,
  title={Large population stochastic dynamic games: closed-loop {McKean}-{Vlasov} systems and the {Nash} certainty equivalence principle},
  author={Huang, Minyi and Malham{\'e}, Roland P and Caines, Peter E},
  journal={Communications in Information and Systems},
  volume={6},
  number={3},
  pages={221--252},
  year={2006}
}

@article{iyengar2005robust,
  title={Robust dynamic programming},
  author={Iyengar, Garud N},
  journal={Mathematics of Operations Research},
  volume={30},
  number={2},
  pages={257--280},
  year={2005},
  publisher={INFORMS}
}

@article{langner2024markov,
  title={Markov-{Nash} equilibria in mean-field games under model uncertainty},
  author={Langner, Johannes and Neufeld, Ariel and Park, Kyunghyun},
  journal={arXiv preprint arXiv:2410.11652},
  year={2024}
}

@article{lasry2007mean,
  title={Mean field games},
  author={Lasry, Jean-Michel and Lions, Pierre-Louis},
  journal={Japanese Journal of Mathematics},
  volume={2},
  number={1},
  pages={229--260},
  year={2007},
  publisher={Springer}
}

@article{mohajerin2018data,
  title={Data-driven distributionally robust optimization using the {W}asserstein metric: Performance guarantees and tractable reformulations},
  author={Mohajerin Esfahani, Peyman and Kuhn, Daniel},
  journal={Mathematical Programming},
  volume={171},
  number={1--2},
  pages={115--166},
  year={2018}
}

@article{nilim2005robust,
  title={Robust control of {Markov} decision processes with uncertain transition matrices},
  author={Nilim, Arnab and El Ghaoui, Laurent},
  journal={Operations Research},
  volume={53},
  number={5},
  pages={780--798},
  year={2005},
  publisher={INFORMS}
}

@book{rockafellar1998variational,
  title={Variational analysis},
  author={Rockafellar, R Tyrrell and Wets, Roger JB},
  year={1998},
  publisher={Springer}
}

@article{saldi2018markov,
  title={Markov--{Nash} equilibria in mean-field games with discounted cost},
  author={Saldi, Naci and Ba{\c{s}}ar, Tamer and Raginsky, Maxim},
  journal={SIAM Journal on Control and Optimization},
  volume={56},
  number={6},
  pages={4256--4287},
  year={2018},
  publisher={SIAM}
}

@article{saldi2019approximate,
  title={Approximate {Nash} equilibria in partially observed stochastic games with mean-field interactions},
  author={Saldi, Naci and Ba{\c{s}}ar, Tamer and Raginsky, Maxim},
  journal={Mathematics of Operations Research},
  volume={44},
  number={3},
  pages={1006--1033},
  year={2019},
  publisher={Informs}
}

@article{wiesemann2013robust,
  title={Robust {Markov} decision processes},
  author={Wiesemann, Wolfram and Kuhn, Daniel and Rustem, Ber{\c{c}}},
  journal={Mathematics of Operations Research},
  volume={38},
  number={1},
  pages={153--183},
  year={2013},
  publisher={INFORMS}
}

@article{xu2010distributionally,
  title={Distributionally robust {Markov} decision processes},
  author={Xu, Huan and Mannor, Shie},
  journal={Advances in Neural Information Processing Systems},
  volume={23},
  year={2010}
}

@article{LasryLions.06a,
	Author = {J.-M. Lasry and P.-L. Lions},
	Journal = {Comptes Rendus Math{\'e}matique},
	Number = {9},
	Pages = {619--625},
	Title = {Jeux \`a champ moyen. {I}. {L}e cas stationnaire},
	Volume = {343},
	Year = {2006}}

@article{LasryLions.06b,
	Author = {J.-M. Lasry and P.-L. Lions},
	Journal = {Comptes Rendus Math{\'e}matique},
	Number = {10},
	Pages = {679--684},
	Title = {Jeux \`a champ moyen. {II}. {H}orizon fini et contr\^ole optimal},
	Volume = {343},
	Year = {2006}}

@article{HuangMalhameCaines.07,
	Author = {M. Huang and P. E. Caines and R. P. Malham{\'e}},
	Journal = {IEEE Transactions on Automatic Control},
	Number = {9},
	Pages = {1560--1571},
	Title = {Large-population cost-coupled {LQG} problems with nonuniform agents: individual-mass behavior and decentralized {$\epsilon$}-{N}ash equilibria},
	Volume = {52},
	Year = {2007},
}

@book{CarmonaDelaRue.17a,
	Author = {R. Carmona and F. Delarue},
	Publisher = {Springer},
	Title = {Probabilistic Theory of Mean Field Games with Applications I},
	Year = {2017}}

@book{CarmonaDelaRue.17b,
	Author = {R. Carmona and F. Delarue},
	Publisher = {Springer},
	Title = {Probabilistic Theory of Mean Field Games with Applications II},
	Year = {2017}}

@book{bensoussan2013mean,
  title={Mean Field Games and Mean Field Type Control Theory},
  author={Bensoussan, Alain and Frehse, Jens and Yam, Phillip},
  year={2013},
  publisher={Springer}
}

@article{gomes2014mean,
  title={Mean field games models—{A} brief survey},
  author={Gomes, Diogo A and Sa{\'u}de, Jo{\~a}o},
  journal={Dynamic Games and Applications},
  volume={4},
  number={2},
  pages={110--154},
  year={2014},
  publisher={Springer}
}

\section*{Appendix}
\appendix

\section{Alternative Assumptions} 
\label{app:ass}

When we introduce the MFG under model uncertainty in Subsection \ref{sub:MFG}, we impose Assumption \ref{ass:uncertainty} on the functional $\Pf$, which maps any population distribution $v \in \Pc(\Dc)$ into a set of transition probabilities varying over all state-control pairs $(x, u) \in \Dc \times U$. In its definition \eqref{eq:Pf}, we require that for all given triples $(x, u, v)$, all possible transition probabilities must be in the set $\Ff(x, u, v)$, which is a subset of $\Pc(\Dc)$ and satisfies the $L$-Lipschitz condition in \eqref{eq:F_Lip}. In this Appendix, we show that Assumption \ref{ass:uncertainty} on $\Pf$ can be equivalently formulated in terms of $\Ff$.

\begin{proposition}
    \label{prop:mapping}
    Assume that the set-valued mapping $\Ff$ admits a ``ball structure'' in the sense that there exist two functions $\mathtt{c}: \Dc \times U \times \Pc(\Dc) \to \Pc(\Dc)$ (``center'') and $\mathtt{R}: \Dc \times U \times \Pc(\Dc) \to \Rb_+$ (``radius'') such that, for all $(x, u, v) \in \Dc \times U \times \Pc(\Dc)$,  
    \begin{align*}
        \Ff(x, u, v) = \left\{ p \in \Pc(\Dc) : d_{W_1}(p, \, \mathtt{c}(x, u, v)) \le \mathtt{R}(x, u, v) \right\}.
    \end{align*} 
    Additionally, suppose that:
    \begin{enumerate}
        \item[(i)] $\mathtt{c}$ is $L$-Lipschitz continuous:
        \begin{align*}
            d_{W_1}(\mathtt{c}(x_1, u_1, v_1), \mathtt{c}(x_2, u_2, v_2)) \leq L\big(d_\Dc(x_1, x_2) + |u_1- u_2| + d_{W_1}(v_1, v_2)\big);
        \end{align*}
        \item[(ii)] $\mathtt{R}$ is continuous and $\mathtt{R}(x, u, v) \geq \mathtt{R}_{\min} > 0$ for all $(x, u, v) \in \Dc \times U \times \Pc(\Dc)$.
    \end{enumerate}
    Then, Assumption \ref{ass:uncertainty} holds. 
\end{proposition}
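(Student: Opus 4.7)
The plan is to verify separately each of the four properties asserted in Assumption \ref{ass:uncertainty}. Non-emptiness is immediate from taking the center as the canonical selection: for fixed $v$, the map $\bP_v(x,u) := \mathtt{c}(x,u,v)$ is $L$-Lipschitz in $(x,u)$ by assumption (i) (matching the Lipschitz constant in \eqref{eq:F_Lip}), and $d_{W_1}(\mathtt{c}(x,u,v), \mathtt{c}(x,u,v)) = 0 \le \mathtt{R}(x,u,v)$ puts each value in $\Ff(x,u,v)$, so $\bP_v \in \Pf(v)$.

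For convex-valuedness, each $\Ff(x,u,v)$ is a sublevel set of the convex function $p \mapsto d_{W_1}(p, \mathtt{c}(x,u,v))$ and hence convex, while the joint convexity of $d_{W_1}$ preserves the $L$-Lipschitz bound under convex combinations of two selections. Compact-valuedness follows from the Arzel\`a--Ascoli theorem: $\Pf(v)$ consists of uniformly $L$-Lipschitz (hence equicontinuous) maps from the compact metric space $\Dc \times U$ into the compact space $\Pc(\Dc)$, so it is relatively compact in $d_\infty$; it is closed since both the Lipschitz bound and the membership conditions $p(x,u) \in \Ff(x,u,v)$ pass to (pointwise, hence uniform) limits.

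Continuity of $\Pf$ in the Hausdorff metric reduces to upper and lower hemicontinuity. Upper hemicontinuity is straightforward: if $v_n \to v$, $\bP_n \in \Pf(v_n)$, and $\bP_n \to \bP$ in $d_\infty$ (the equicontinuity just established guarantees existence of such convergent subsequences), the $L$-Lipschitz property survives the uniform limit, and each ball inequality $d_{W_1}(\bP_n(x,u), \mathtt{c}(x,u,v_n)) \le \mathtt{R}(x,u,v_n)$ passes to the limit by continuity of $\mathtt{c}$ and $\mathtt{R}$ in $v$, yielding $\bP \in \Pf(v)$.

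The main obstacle is lower hemicontinuity, where one must explicitly construct $\bP_n \in \Pf(v_n)$ converging to a given $\bP \in \Pf(v)$ while simultaneously preserving both the Lipschitz bound and the moving-ball inclusion. The plan is to contract $\bP$ slightly toward the new center by setting
\[
\bP_n(x,u) := (1-\epsilon_n)\,\bP(x,u) + \epsilon_n\,\mathtt{c}(x,u,v_n),
\]
with $\epsilon_n \in [0,1]$ to be chosen. Joint convexity of $d_{W_1}$ together with the triangle inequality and assumption (i) give
\[
d_{W_1}(\bP_n(x,u), \mathtt{c}(x,u,v_n)) \le (1-\epsilon_n)\bigl[\mathtt{R}(x,u,v) + L\,d_{W_1}(v,v_n)\bigr].
\]
To force this to be $\le \mathtt{R}(x,u,v_n)$ uniformly in $(x,u)$, I would invoke uniform continuity of $\mathtt{R}$ on the compact product space together with the crucial lower bound $\mathtt{R} \ge \mathtt{R}_{\min} > 0$: setting
\[
\delta_n := \sup_{(x,u)}\bigl[\mathtt{R}(x,u,v) - \mathtt{R}(x,u,v_n)\bigr]^+ + L\,d_{W_1}(v,v_n) \to 0
\]
and $\epsilon_n := \delta_n / \mathtt{R}_{\min}$ suffices for all large $n$. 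The $L$-Lipschitz property of $\bP_n$ is inherited from those of $\bP$ and $\mathtt{c}(\cdot,\cdot,v_n)$ via convex combination, and $d_\infty(\bP_n,\bP) \le \epsilon_n \cdot \mathrm{diam}(\Pc(\Dc)) \to 0$ completes the construction. Note the essential role of $\mathtt{R}_{\min} > 0$: without it, the required $\epsilon_n$ could blow up at points where the ball degenerates.
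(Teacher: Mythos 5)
Your proposal is correct and follows essentially the same route as the paper's proof: the center as the canonical selection for non-emptiness, convexity of Wasserstein balls plus joint convexity of $d_{W_1}$ for convex-valuedness, Arzel\`a--Ascoli for compactness, a closed-graph argument for upper hemicontinuity, and, for lower hemicontinuity, the same interpolation $\bP_n = (1-\epsilon_n)\bP + \epsilon_n\,\mathtt{c}(\cdot,\cdot,v_n)$ toward the new center with a uniform weight driven to zero by the lower bound $\mathtt{R}_{\min}>0$. The only difference is cosmetic: the paper takes the weight to be $\min_{(x,u)}\min\{1,\mathtt{R}(x,u,v_n)/(\mathtt{R}(x,u,v)+L\,d_{W_1}(v,v_n))\}$ while you take $\epsilon_n=\delta_n/\mathtt{R}_{\min}$; both choices verify the same ball inclusion and both rely on $\mathtt{R}_{\min}>0$ in the same essential way.
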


In the subsequent proof, we extensively use properties of the Wasserstein-1 distance $d_{W_1}$ and introduce some preliminary results. For all $p, q \in \Pc(\Dc)$, recall that the Wasserstein-1 distance 
admits the Kantorovich-Rubinstein dual representation:
\begin{equation}\label{eq:KR_dual}
	d_{W_1}(p, q) = \sup_{f \in \text{Lip}_1} \left|\int f \, \dd p - \int f \, \dd q \right|,
\end{equation}
where $\text{Lip}_1$ denotes the set of 1-Lipschitz functions $f: \Dc \to \mathbb{R}$. 
This representation is crucial for establishing Lipschitz continuity of convex combinations.
In particular, for all $\lambda \in [0,1]$ and probability measures $p_1, p_2, q_1, q_2 \in \Pc(\Dc)$, 
\begin{equation}\label{eq:convex_combo_lip}
	d_{W_1}(\lambda p_1 + (1- \lambda) p_2, \lambda q_1 + (1-\lambda) q_2) 
	\leq \lambda d_{W_1}(p_1, q_1) + (1-\lambda) d_{W_1}(p_2, q_2),
\end{equation}
which follows from the linearity of the integral in the dual representation.

\begin{proof}[Proof of Proposition \ref{prop:mapping}]
	We establish the four properties in Assumption~\ref{ass:uncertainty} sequentially.
	
	\medskip
	\textbf{Step 1: Non-emptiness.} 
	The center function $\mathtt{c}(\cdot, \cdot, \nu)$ itself provides a natural 
	element of $\Pf(\nu)$. Define $\bP^0: \Dc \times U \to \Pc(\Dc)$ by 
	$\bP^0(x,u) = \mathtt{c}(x, u, \nu)$ for all $(x,u)$.
	
	By Condition (i) of the proposition, $\bP^0$ is $L$-Lipschitz continuous
	\[
	d_{W_1}(\bP^0(x_1,u_1), \bP^0(x_2,u_2)) = d_{W_1}(\mathtt{c}(x_1, u_1, \nu), 
	\mathtt{c}(x_2, u_2, \nu)) \leq L(d_\Dc(x_1,x_2) + |u_1-u_2|).
	\]
	Moreover, by the ball structure, $\mathtt{c}(x, u, \nu)$ is the center of 
	$\Ff(x, u, \nu)$, hence $\bP^0(x,u) = \mathtt{c}(x, u, \nu) \in \Ff(x, u, \nu)$ 
	for all $(x,u)$. Therefore, $\bP^0 \in \Pf(\nu)$, establishing non-emptiness.
	
	\medskip
	\textbf{Step 2: Convexity.} 
	Let $\bP_1, \bP_2 \in \Pf(\nu)$ and $\lambda \in [0,1]$. Define 
	$\bP_\lambda = \lambda \bP_1 + (1-\lambda) \bP_2$, i.e., 
	$\bP_\lambda(x,u) = \lambda \bP_1(x,u) + (1-\lambda) \bP_2(x,u)$ for all $(x,u)$.
	
	\textit{Membership in $\Ff(x, u, \nu)$:} Because $\Ff(x, u, \nu)$ has the ball 
	structure and is therefore convex, we have
	\[
	\bP_\lambda(x,u) = \lambda \bP_1(x,u) + (1-\lambda) \bP_2(x,u) \in \Ff(x, u, \nu)
	\]
	for each $(x,u) \in \Dc \times U$.
	
	\textit{Lipschitz continuity:} For any $(x_1,u_1), (x_2,u_2) \in \Dc \times U$, 
	using \eqref{eq:convex_combo_lip},
	\begin{align*}
		&d_{W_1}(\bP_\lambda(x_1,u_1), \bP_\lambda(x_2,u_2)) \\
		&= d_{W_1}(\lambda \bP_1(x_1,u_1) + (1-\lambda) \bP_2(x_1,u_1), 
		\lambda \bP_1(x_2,u_2) + (1-\lambda) \bP_2(x_2,u_2)) \\
		&\leq \lambda d_{W_1}(\bP_1(x_1,u_1), \bP_1(x_2,u_2)) 
		+ (1-\lambda) d_{W_1}(\bP_2(x_1,u_1), \bP_2(x_2,u_2)) \\
		&\leq \lambda L(d_\Dc (x_1,x_2) + |u_1-u_2|)  
		+ (1-\lambda) L(d_\Dc (x_1,x_2) + |u_1-u_2|) \\
		&= L(d_\Dc (x_1,x_2) + |u_1-u_2|).
	\end{align*}
	
	Therefore, $\bP_\lambda \in \Pf(\nu)$, establishing convexity.
	
	\medskip
	\textbf{Step 3: Compactness.} 
	Consider any sequence $\{\bP_n\} \subset \Pf(\nu)$. Each $\bP_n$ is an 
	$L$-Lipschitz continuous function from the compact space $\Dc \times U$ to 
	$\Pc(\Dc)$. Therefore, the family $\{\bP_n\}$ is equicontinuous. Moreover, 
	because $\Pc(\Dc)$ is a finite-dimensional compact space, the family is 
	uniformly bounded.
	
	By the Arzel\`a-Ascoli theorem, $\{\bP_n\}$ is sequentially compact in the 
	topology of uniform convergence. Therefore, there exists a subsequence 
	$\{\bP_{n_k}\}$ and a continuous function $\bP: \Dc \times U \to \Pc(\Dc)$ 
	such that $\bP_{n_k} \to \bP$ uniformly, i.e., $d_\infty(\bP_{n_k}, \bP) \to 0$.
	
	We now verify that $\bP \in \Pf(\nu)$.
	
	\textit{Membership in $\Ff(x, u, \nu)$:} 
	For each $(x,u)$, because $\bP_{n_k}(x,u) \in \Ff(x, u, \nu)$ and 
	$\Ff(x, u, \nu)$ is closed, the limit 
	$\bP(x,u) = \lim\limits_{k\to\infty} \bP_{n_k}(x,u)$ belongs to $\Ff(x, u, \nu)$.
	
	\textit{Lipschitz continuity:} 
	For any $(x_1,u_1), (x_2,u_2) \in \Dc \times U$, using the triangle inequality, we have
	\begin{align*}
		d_{W_1}(\bP(x_1,u_1), \bP(x_2,u_2)) 
		&\leq d_{W_1}(\bP(x_1,u_1), \bP_{n_k}(x_1,u_1)) \\
		&\quad + d_{W_1}(\bP_{n_k}(x_1,u_1), \bP_{n_k}(x_2,u_2)) \\
		&\quad + d_{W_1}(\bP_{n_k}(x_2,u_2), \bP(x_2,u_2)) \\
		&\leq 2d_\infty(\bP_{n_k}, \bP) + L(d_\Dc (x_1,x_2) + |u_1-u_2|).
	\end{align*}
	Taking $k \to \infty$ yields 
	$d_{W_1}(\bP(x_1,u_1), \bP(x_2,u_2)) \leq L(d_\Dc (x_1,x_2) + |u_1-u_2|)$.
	
	Therefore, $\bP \in \Pf(\nu)$. This shows that $\Pf(\nu)$ is sequentially 
	compact. Because $(\Pc(\Dc)^{\Dc \times U}, d_\infty)$ is a metric space, 
	sequential compactness is equivalent to compactness. Thus, $\Pf(\nu)$ is compact.
	
	\medskip
    \textbf{Step 4: Upper semicontinuity.} 
	Let $\nu_n \to \nu$ in $\Pc(\Dc)$. Suppose $\bP_n \in \Pf(\nu_n)$ with 
	$\bP_n \to \bP$ in $d_\infty$. We need to show $\bP \in \Pf(\nu)$.
	
	For each $(x,u)$, we have $\bP_n(x,u) \to \bP(x,u)$ and 
	$\bP_n(x,u) \in \Ff(x, u, \nu_n)$. This means that
	$$
	d_{W_1}(\bP_n(x,u), \mathtt{c}(x, u, \nu_n)) \leq \mathtt{R}(x, u, \nu_n).
	$$
	By the continuity of $\mathtt{c}$ and $\mathtt{R}$ (from the conditions (i) and (ii)), 
	we have $\mathtt{c}(x, u, \nu_n) \to \mathtt{c}(x, u, \nu)$ and 
	$\mathtt{R}(x, u, \nu_n) \to \mathtt{R}(x, u, \nu)$. Taking limits in the 
	inequality above,
	\[
	d_{W_1}(\bP(x,u), \mathtt{c}(x, u, \nu)) \leq \mathtt{R}(x, u, \nu),
	\]
	which implies $\bP(x,u) \in \Ff(x, u, \nu)$.
	
	The $L$-Lipschitz continuity of $\bP$ follows from the argument in 
	Step 3. Therefore, $\bP \in \Pf(\nu)$, 
	establishing upper semicontinuity.

    \medskip
    \textbf{Step 5: Lower semicontinuity.} 
    Given $\bP \in \Pf(\nu)$ and $\nu_n \to \nu$, we construct a sequence 
    $\bP_n \in \Pf(\nu_n)$ such that $\bP_n \to \bP$ in $d_\infty$.

    Define $\bP_n: \Dc \times U \to \Pc(\Dc)$ by
    \begin{equation}\label{eq:Pn_construction}
    \bP_n(x,u) := (1-\lambda_n) \mathtt{c}(x, u, \nu_n) + \lambda_n \bP(x,u),
    \end{equation}
    where the weight $\lambda_n$ is defined as
    \begin{equation}\label{eq:lambda_uniform}
    \lambda_n = \min_{(x,u) \in \Dc \times U} \min\left\{1, 
    \frac{\mathtt{R}(x, u, \nu_n)}{\mathtt{R}(x, u, \nu) + L \cdot d_{W_1}(\nu, \nu_n)}\right\}.
    \end{equation}

    \textit{Verification of membership:} 
    Using the triangle inequality,
    \begin{align*}
    d_{W_1}(\bP_n(x,u), \mathtt{c}(x, u, \nu_n)) 
    &= \lambda_n \cdot d_{W_1}(\bP(x,u), \mathtt{c}(x, u, \nu_n)) \\
    &\leq \lambda_n \cdot [d_{W_1}(\bP(x,u), \mathtt{c}(x, u, \nu)) 
    + d_{W_1}(\mathtt{c}(x, u, \nu), \mathtt{c}(x, u, \nu_n))] \\
    &\leq \lambda_n \cdot [\mathtt{R}(x, u, \nu) + L \cdot d_{W_1}(\nu, \nu_n)],
    \end{align*}
    where we used $\bP(x,u) \in \Ff(x, u, \nu)$ and the Lipschitz continuity of $\mathtt{c}$ from Condition (i). By the choice of $\lambda_n$ in~\eqref{eq:lambda_uniform},
    \[
    d_{W_1}(\bP_n(x,u), \mathtt{c}(x, u, \nu_n)) 
    \leq \lambda_n [\mathtt{R}(x, u, \nu) + L \cdot d_{W_1}(\nu, \nu_n)] 
    \leq \mathtt{R}(x, u, \nu_n),
    \]
    ensuring $\bP_n(x,u) \in \Ff(x, u, \nu_n)$ for all $(x,u)$.

    \textit{Verification of Lipschitz continuity:} 
	Because $\lambda_n$ is a constant across $(x,u)$,
    using~\eqref{eq:convex_combo_lip},
	\begin{align*}
		d_{W_1}(\bP_n(x_1,u_1), \bP_n(x_2,u_2)) 
		&= d_{W_1}((1-\lambda_n)\mathtt{c}(x_1, u_1, \nu_n) + \lambda_n \bP(x_1,u_1), \\
		&\qquad\qquad (1-\lambda_n)\mathtt{c}(x_2, u_2, \nu_n) + \lambda_n \bP(x_2,u_2)) \\
		&\leq (1-\lambda_n) d_{W_1}(\mathtt{c}(x_1, u_1, \nu_n), 
		\mathtt{c}(x_2, u_2, \nu_n)) \\
		&\quad + \lambda_n d_{W_1}(\bP(x_1,u_1), \bP(x_2,u_2)) \\
		&\leq L(d_\Dc(x_1,x_2) + |u_1-u_2|).
	\end{align*}
	
	Therefore, $\bP_n \in \Pf(\nu_n)$.

    \textit{Convergence in $d_\infty$:} 
    We show that $\lambda_n \to 1$. For each $(x,u) \in \Dc \times U$, the ratio
    $$r_n(x,u) = \frac{\mathtt{R}(x, u, \nu_n)}{\mathtt{R}(x, u, \nu) + L \cdot d_{W_1}(\nu, \nu_n)}.$$

    We establish that $r_n(x,u) \to 1$ uniformly over $(x,u) \in \Dc \times U$. Because $\mathtt{R}$ is continuous on the compact set $\Dc \times U \times \Pc(\Dc)$, it is uniformly continuous. Hence $\mathtt{R}(x,u,\nu_n) \to \mathtt{R}(x,u,\nu)$ uniformly over $(x,u)$ as $\nu_n \to \nu$. Since the denominator satisfies $\mathtt{R}(x,u,\nu) + L \cdot d_{W_1}(\nu,\nu_n) \geq \mathtt{R}_{\min} > 0$ uniformly, and converges uniformly to $\mathtt{R}(x,u,\nu)$, we conclude that $r_n(x,u) \to 1$ uniformly over $(x,u) \in \Dc \times U$.

    Therefore, $\lambda_n = \min_{(x,u)} \min\{1, r_n(x,u)\} \to 1$.

    Now,
    \begin{align*}
    d_\infty(\bP_n, \bP) 
    &= \sup_{(x,u)} (1-\lambda_n) \cdot d_{W_1}(\mathtt{c}(x, u, \nu_n), \bP(x,u)) \\
    &\leq (1-\lambda_n) \sup_{(x,u)} [\mathtt{R}(x, u, \nu) + L \cdot d_{W_1}(\nu_n, \nu)] \to 0,
    \end{align*}
    where we used $(1-\lambda_n) \to 0$ and the boundedness of $\mathtt{R}$ on the compact set $\Dc \times U \times \{\nu\}$. This establishes lower semicontinuity.
\end{proof}

\end{document}